\pdfoutput=1
\documentclass{amsart}

\usepackage[T1]{fontenc}
\usepackage[utf8]{inputenc}
\usepackage{amsmath,amssymb,amsthm}
\usepackage{enumitem}
\usepackage{booktabs}
\usepackage[colorlinks=true,linkcolor=blue,citecolor=blue,urlcolor=blue]{hyperref}

\newtheorem{theorem}{Theorem}[section]
\newtheorem{proposition}[theorem]{Proposition}
\newtheorem{lemma}[theorem]{Lemma}
\newtheorem{corollary}[theorem]{Corollary}

\theoremstyle{definition}
\newtheorem{definition}[theorem]{Definition}
\newtheorem{example}[theorem]{Example}
\newtheorem{remark}[theorem]{Remark}
\newtheorem{question}[theorem]{Question}

\usepackage{needspace}
\newcommand{\OO}{\mathcal{O}}
\newcommand{\EE}{\mathcal{E}}
\newcommand{\FF}{\mathcal{F}}
\newcommand{\II}{\mathcal{I}}
\newcommand{\CC}{\mathbb{C}}
\newcommand{\PP}{\mathbb{P}}
\newcommand{\ZZ}{\mathbb{Z}}
\newcommand{\Hilb}{\operatorname{Hilb}}
\newcommand{\Num}{\operatorname{Num}}
\newcommand{\Pic}{\operatorname{Pic}}
\newcommand{\Ext}{\operatorname{Ext}}
\newcommand{\Hom}{\operatorname{Hom}}
\newcommand{\End}{\operatorname{End}}
\newcommand{\Rep}{\operatorname{Rep}}
\newcommand{\rk}{\operatorname{rk}}
\newcommand{\len}{\ell}
\newcommand{\QQ}{\mathbb{Q}}
\newcommand{\RR}{\mathbb{R}}
\newcommand{\NE}{\overline{\operatorname{NE}}}
\newcommand{\Eff}{\operatorname{Eff}}

\title[A no-go theorem for special Ulrich bundles]{A no-go theorem for special Ulrich bundles,\\ with a complement on primary Burniat surfaces}

\author{Cristian Anghel}
\address{``Simion Stoilow'' Institute of Mathematics of the Romanian Academy, P.O. Box 1-764, RO-014700 Bucharest, Romania}
\email{cristian.anghel@imar.ro}

\author{Filip Chindea}
\address{``Simion Stoilow'' Institute of Mathematics of the Romanian Academy, P.O. Box 1-764, RO-014700 Bucharest, Romania}
\email{filip.chindea@imar.ro}

\subjclass[2020]{Primary 14J60; Secondary 14J29, 13C14}
\keywords{Ulrich bundle, Cayley--Bacharach property, special polarization, Burniat surface, bidouble cover}

\begin{document}

\begin{abstract}
Let $X$ be a smooth projective surface with $p_g=0$, and let $H$ be an ample divisor with $h^0(\OO_X(H))\neq0$, $\chi(\OO_X(H))\ge q$, and $h^1(\OO_X(H))\neq0$. We prove that no rank two bundle $\EE$ with $c_1(\EE)=3H+K_X$, with the Ulrich value of $c_2(\EE)$, and satisfying $h^0(\EE(-H))=0$, can arise from an extension
\[
0\longrightarrow \OO_X(H+K_X)\longrightarrow \EE\longrightarrow \OO_X(2H)\otimes\II_Z\longrightarrow0.
\]
Thus, for this natural Cayley--Bacharach construction, non-speciality of the polarization is necessary rather than merely convenient.

We then study primary Burniat surfaces. We show that every ample and base point free divisor is non-special; consequently, every polarization carries a stable special Ulrich bundle of rank two, and the surface is strictly Ulrich wild with respect to every polarization. We also locate the special ample classes on three numerical rays through $K_X$, compute explicit families on these rays, and analyze a twisted-kernel variant of the construction. The degree bound underlying the non-speciality result overlaps with recent work of Y. Cho, while the global consequences and the no-go theorem are independent.
\end{abstract}

\maketitle

\section{Introduction}\label{sec:intro}

\subsection{Ulrich bundles}\label{subsec:ulrich}
Let $X\subseteq\PP^N$ be a smooth projective variety of dimension $n$ over $\CC$, polarized by the very ample divisor $H$ with $\OO_X(H)=\OO_{\PP^N}(1)|_X$.

\begin{definition}\label{def:ulrich}
A vector bundle $\EE$ on $X$ is an \emph{Ulrich bundle} (with respect to $H$) if $H^\bullet(X,\EE(-iH))=0$ for $i=1,\dots,n$.
\end{definition}

For a surface this is the six vanishings
\begin{equation}\label{eq:ulrich-vanishing}
h^j\bigl(X,\EE(-H)\bigr)=h^j\bigl(X,\EE(-2H)\bigr)=0,\qquad j=0,1,2 .
\end{equation}
Equivalently, $\pi_*\EE\cong\OO_{\PP^n}^{\oplus d\cdot\rk\EE}$ for a finite linear projection $\pi:X\to\PP^n$, $d=\deg X$, or again $\bigoplus_mH^0(\EE(mH))$ is a maximally generated maximal Cohen--Macaulay module. Existence on every projective variety was raised as a question by Eisenbud and Schreyer \cite{ES03}; see \cite{Bea18,Cos17}. The Riemann--Roch constraints $\chi(\EE(-H))=\chi(\EE(-2H))=0$ give (cf.\ \cite[\S2]{Cas17})
\begin{gather}
c_1(\EE)\cdot H=\frac{\rk\EE}{2}\bigl(3H^2+H\cdot K_X\bigr),\label{eq:chern}\\
c_2(\EE)=\tfrac12\bigl(c_1^2-c_1\cdot K_X\bigr)-\rk(\EE)\bigl(H^2-\chi(\OO_X)\bigr).\notag
\end{gather}
Following \cite{ES03,Cas17}, a rank two Ulrich bundle is \emph{special} if $c_1(\EE)=3H+K_X$, an equality in $\Pic(X)$ and not merely in $\Num(X)$ --- Proposition~\ref{prop:numkernel} deals with the numerical variant. Then \eqref{eq:chern} forces
\begin{equation}\label{eq:c2special}
c_2(\EE)=\tfrac12\bigl(5H^2+3H\cdot K_X\bigr)+2\chi(\OO_X).
\end{equation}

\subsection{Known results, and two constructions from 1994}
On curves Ulrich line bundles always exist \cite{ES03}. In dimension two the question is open in general, though a rich list of cases is known: anticanonical rational surfaces, K3 surfaces \cite{AFO17,Fae19}, abelian surfaces \cite{Bea16}, Enriques surfaces \cite{BN18}, surfaces with $p_g=q=0$ and \emph{non-special} very ample polarization \cite{Cas17} or with $|H-K_X|$ containing an irreducible curve \cite{Bea16b}, extended to regular surfaces of non-negative Kodaira dimension in \cite{Cas22}, non-special surfaces with $p_g=0$, $q=1$ \cite{Cas19}, certain irregular surfaces \cite{Lop21}, and surfaces of Picard rank one with $K_X$ very ample \cite{BP24}. Very recently Cho \cite{Cho26} gave an algorithm for the cohomology of arbitrary divisors on primary Burniat surfaces and deduced that these carry no Ulrich line bundle for any polarization, while a rank two Ulrich bundle exists for $3K_X$. For surfaces of general type the list is still short, and in every result above resting on the Cayley--Bacharach (or Serre) construction the polarization is assumed non-special, i.e.\ $h^1(X,\OO_X(H))=0$. One purpose of this paper is to explain that, for the natural shape of the construction, this hypothesis is \emph{necessary}.

The work grew out of re-reading two papers from 1994 on stable bundles with large $c_2$, recalled in \S\ref{sec:methods}: Li--Qin \cite{LQ94} use \emph{generic} $0$-cycles, genericity supplying both the Cayley--Bacharach property and the vanishings needed for stability, while \cite{An94} places the cycle on an irreducible curve and enforces both by a B\'ezout argument, following Griffiths--Harris \cite{GH78}. Both aim at $c_2\gg0$, so the lengths are bounded \emph{below}; the Ulrich condition is of the opposite nature, the Chern classes and hence $\len(Z)$ being \emph{fixed} and comparatively small by \eqref{eq:chern}. The generic mechanism works exactly when the polarization is non-special, and then recovers Theorem~\ref{thm:main}; one might hope that the special-position mechanism, designed to work \emph{below} the generic threshold, could cover special polarizations. Theorem~\ref{thm:nogo} shows that it cannot, and not for lack of ingenuity in placing the points --- though it does remain the natural tool for the twisted-kernel variant of \S\ref{subsec:twisted}, where the Cayley--Bacharach system is no longer $|H|$.

\subsection{Main results}\label{subsec:mainresults}
Throughout, $X$ is a smooth projective complex surface. Everything is organized around one existence statement and one obstruction to it. In the form given here the existence statement is due to Casnati \cite{Cas17}; the proof we give, via \cite{LQ94}, is short and elementary (\S\ref{subsec:existence}).

\begin{theorem}[Casnati {\cite[Thm.~1.1]{Cas17}}]\label{thm:main}
Let $X$ be a surface with $p_g(X)=q(X)=0$ and let $H$ be a very ample divisor on $X$ with $h^1(X,\OO_X(H))=0$. Then $X$ carries a special Ulrich bundle $\EE$ of rank two, i.e.\ an Ulrich bundle with
\[
c_1(\EE)=3H+K_X,\qquad c_2(\EE)=\tfrac12\bigl(5H^2+3H\cdot K_X\bigr)+2 .
\]
\end{theorem}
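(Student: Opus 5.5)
The plan is to build $\EE$ by the Hartshorne--Serre (Cayley--Bacharach) construction as an extension
\[
0\longrightarrow \OO_X(H+K_X)\longrightarrow \EE\longrightarrow \OO_X(2H)\otimes\II_Z\longrightarrow0,
\]
with $Z$ a general $0$-cycle, following Li--Qin \cite{LQ94}. The Chern class constraints fix the data. We need $c_1(\EE)=3H+K_X$, and
\[
c_2(\EE)=(H+K_X)\cdot 2H+\len(Z),
\]
so comparing with \eqref{eq:c2special} and $\chi(\OO_X)=1$ gives
\[
\len(Z)=\tfrac12(H^2+H\cdot K_X)+2 .
\]
By Riemann--Roch together with $h^1(\OO_X(H))=h^2(\OO_X(H))=0$ (the latter since $p_g=0$ and $H$ is ample), this equals $h^0(\OO_X(H))+1$.

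\textbf{Step 1: existence of a locally free extension.} By Serre's correspondence, a locally free extension exists iff $Z$ satisfies Cayley--Bacharach with respect to
\[
\left|2H-(H+K_X)+K_X\right|=|H|.
\]
Take $Z$ to be $h^0(\OO_X(H))+1$ general points. Then any $h^0(\OO_X(H))$ of them impose independent conditions on $|H|$, so no nonzero section of $\OO_X(H)$ vanishes on all but one point. This is exactly the Cayley--Bacharach property.

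\textbf{Step 2: the Ulrich vanishings.} Twisting the sequence by $-H$ gives
\[
0\to\OO_X(K_X)\to\EE(-H)\to\OO_X(H)\otimes\II_Z\to 0.
\]
Here $h^0(K_X)=p_g=0$ and $h^1(K_X)=q=0$. For the quotient, $h^0(\OO_X(H)\otimes\II_Z)=0$ because $\len(Z)>h^0(\OO_X(H))$ and the points are general. Then $\chi(\OO_X(H)\otimes\II_Z)=-1$ together with $h^2(\OO_X(H))=0$ forces $h^1(\OO_X(H)\otimes\II_Z)=1$. Hence $h^0(\EE(-H))=0$, while
\[
h^1(\EE(-H))=0\iff \text{the coboundary } H^1(\OO_X(H)\otimes\II_Z)\to H^2(K_X)\cong\CC \text{ is an isomorphism.}
\]
This coboundary is cup product with the extension class $\xi\in\Ext^1(\OO_X(2H)\otimes\II_Z,\OO_X(H+K_X))$, and by Serre duality it is dual to the pairing of $\xi$ against the unique class in $H^1$. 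Choosing $\xi$ to be the Cayley--Bacharach class, which is nonzero at every point of $Z$, the map is nonzero, hence an isomorphism. Once $h^0(\EE(-H))=h^1(\EE(-H))=0$ hold, $h^2(\EE(-H))=0$ follows from $\chi(\EE(-H))=0$, which the Chern classes guarantee.

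For the twist by $-2H$ I would not compute directly, but use $\EE^\vee\cong\EE(-3H-K_X)$ (rank two) and Serre duality:
\[
h^j(\EE(-2H))=h^{2-j}(\EE^\vee(2H+K_X))=h^{2-j}(\EE(-H)),
\]
so all six vanishings \eqref{eq:ulrich-vanishing} follow.

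\textbf{Main obstacle.} The delicate step is the nondegeneracy of the coboundary, i.e.\ matching the chosen extension class against the one-dimensional $H^1(\OO_X(H)\otimes\II_Z)$. Equivalently, one must show that the general extension giving a bundle does not come from $\Ext^1(\OO_X(2H),\OO_X(H+K_X))$. That group is dual to $H^1(\OO_X(H))$, which vanishes by hypothesis, and this is precisely where non-speciality enters. I would argue it via the exact sequence
\[
\Ext^1(\OO_X(2H),\OO_X(H+K_X))\to\Ext^1(\OO_X(2H)\otimes\II_Z,\OO_X(H+K_X))\to H^0(\mathcal{E}xt^1)\cong\CC^{\len(Z)}\to H^2(\OO_X(-H+K_X)),
\]
whose first term is $0$ under our hypotheses.
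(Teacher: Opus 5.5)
Your proof is correct, and the construction is the paper's: the adjoint-kernel extension \eqref{eq:shape}, a general reduced $Z$ of length $h^0(\OO_X(H))+1$, Cayley--Bacharach for $|H|$ from generality (this is Lemma~\ref{lem:LQ25}), and $h^0(\EE(-H))=0$ from $p_g=0$ and $h^0(\OO_X(H)\otimes\II_Z)=0$. One slip to fix: comparing second Chern classes gives $\len(Z)=\tfrac12H\cdot(H-K_X)+2$, not $\tfrac12(H^2+H\cdot K_X)+2$. The identification with $h^0(\OO_X(H))+1$ that you actually use is the correct one.

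Where you differ is in the remaining vanishings. You get $h^1(\EE(-H))=0$ from the coboundary $H^1(\OO_X(H)\otimes\II_Z)\to H^2(\OO_X(K_X))$, and this argument is already complete in your Step~2. The coboundary is the Serre pairing with $\xi$, that pairing is perfect, $H^1(\OO_X(H)\otimes\II_Z)$ is one-dimensional, and $\xi\neq0$ because a locally free extension of $\OO_X(2H)\otimes\II_Z$ with $Z\neq\emptyset$ cannot split. So the ``main obstacle'' of your last paragraph is not an obstacle, and the local-to-global sequence there is not needed.

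The paper never looks at $\xi$ at all. Twisting by $-2H$ gives $h^0(\EE(-2H))=0$ at once, since $h^0(\OO_X(K_X-H))=0$ and $h^0(\II_Z)=0$. Proposition~\ref{prop:reduction} then turns the $h^0$-vanishings into the $h^2$-vanishings by Serre duality with $\EE^\vee\cong\EE(-3H-K_X)$. Finally $\chi(\EE(-H))=\chi(\EE(-2H))=0$ kills both $h^1$'s.

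That route is shorter, and it shows that for the special Chern classes Ulrichness is the single condition $h^0(\EE(-H))=0$, which is exactly the form the no-go theorem rests on. Your route reaches the same six vanishings with one extra, equally valid, duality computation involving the extension class.
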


Such an $\EE$ is automatically $\mu_H$-semistable, and it is $\mu_H$-stable unless $X$ carries an Ulrich line bundle, hence in particular whenever $\tfrac12(3H^2+H\cdot K_X)\notin\{M\cdot H:M\in\Pic(X)\}$; this is Corollary~\ref{cor:stable} below, and it is not the stability statement of \cite[Thm.~1.2]{Cas17}. The latter is a complete dichotomy for the same bundles: they are never stable, for any $Z$, if $\OO_X(H)$ embeds $X$ as a rational scroll or if $(X,\OO_X(H))\cong(\PP^2,\OO(1))$, and they are stable for a \emph{general} choice of $Z$ in every other case. The two statements are incomparable --- ours quantifies over all admissible $Z$ at the price of a hypothesis on Ulrich line bundles, \cite[Thm.~1.2]{Cas17} is sharper for general $Z$ and needs no such hypothesis.

The bundle arises from an extension
\begin{equation}\label{eq:shape}
0\longrightarrow\OO_X(H+K_X)\longrightarrow\EE\longrightarrow\OO_X(2H)\otimes\II_Z\longrightarrow 0
\end{equation}
with $Z$ a $0$-dimensional subscheme; we call \eqref{eq:shape} the \emph{adjoint-kernel shape}. Beauville \cite{Bea16b} had reached the same conclusion assuming instead that $|H-K_X|$ contain an irreducible curve --- the stronger hypothesis, as observed in \cite[Rem.~3.5]{Cas17}. Indeed, restricting to $C\in|H-K_X|$ and using $\omega_C=\OO_C(H)$ by adjunction, the sequence $0\to\OO_X(K_X)\to\OO_X(H)\to\omega_C\to0$ places $H^1(\OO_X(H))$ between $H^1(\OO_X(K_X))=0$ and the \emph{kernel} of $H^1(\omega_C)\to H^2(\OO_X(K_X))$; that map is surjective because $h^2(\OO_X(H))=0$, and both spaces have dimension one, so it is an isomorphism and the kernel vanishes. So the Lazarsfeld--Mukai mechanism, though not of the shape \eqref{eq:shape} and hence formally outside Theorem~\ref{thm:nogo}, never applies to a special polarization either.

\smallskip
\noindent\emph{The no-go theorem.} In the literature non-speciality is invariably introduced as a \emph{hypothesis} \cite{Bea16b,Cas17,Cas22,Cas19}. Our main result is that, for the natural shape of the construction, it is \emph{necessary}.

\begin{theorem}[No-go]\label{thm:nogo}
Let $X$ be a surface with $p_g=0$ and $q\ge0$ arbitrary, and let $H$ be an ample divisor with $h^0(\OO_X(H))\neq0$ --- for instance, $H$ very ample --- such that
\[
\chi\bigl(\OO_X(H)\bigr)\ge q\qquad\text{and}\qquad h^1\bigl(X,\OO_X(H)\bigr)\neq0 .
\]
Then there exists \emph{no} rank two vector bundle $\EE$ with
\begin{gather*}
c_1(\EE)=3H+K_X,\qquad c_2(\EE)=\tfrac12\bigl(5H^2+3H\cdot K_X\bigr)+2\chi(\OO_X),\\
h^0\bigl(\EE(-H)\bigr)=0,
\end{gather*}
sitting in an extension of the adjoint-kernel shape \eqref{eq:shape}, for any $0$-dimensional subscheme $Z\subset X$. By Proposition~\ref{prop:reduction} these three conditions hold for every bundle satisfying \eqref{eq:ulrich-vanishing}; in particular, when $H$ is very ample, no special Ulrich bundle of rank two arises from \eqref{eq:shape}.
\end{theorem}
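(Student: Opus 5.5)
The plan is to twist \eqref{eq:shape} by $-H$ and $-2H$, use the hypotheses to pin down the cohomology of $\II_Z(H)$ exactly, and then contradict the Cayley--Bacharach property that local freeness of $\EE$ forces on $Z$.

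\textbf{Step 1: length of $Z$ and the Cayley--Bacharach condition.} Suppose $\EE$ exists. From \eqref{eq:shape}, $c_2(\EE)=(H+K_X)\cdot 2H+\len(Z)$. Comparing with the prescribed $c_2$ and using Riemann--Roch $\chi(\OO_X(H))=\chi(\OO_X)+\tfrac12(H^2-H\cdot K_X)$ gives
\[
\len(Z)=\chi(\OO_X(H))+\chi(\OO_X)=\chi(\OO_X(H))+1-q,
\]
which is $\ge1$ by the hypothesis $\chi(\OO_X(H))\ge q$. In particular $Z\neq\emptyset$.

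Since $\EE$ is locally free, the Serre correspondence (Griffiths--Harris) says $Z$ satisfies Cayley--Bacharach with respect to $|K_X+2H-(H+K_X)|=|H|$. Concretely, every section of $\OO_X(H)$ vanishing on a colength-one subscheme $Z'\subset Z$ vanishes on $Z$.

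\textbf{Step 2: computing $H^\bullet(\EE(-H))$.} Twist \eqref{eq:shape} by $-2H$ to get $0\to\OO_X(K_X-H)\to\EE(-2H)\to\II_Z\to0$. Here $h^0(\II_Z)=0$ because $Z\ne\emptyset$, and $h^0(\OO_X(K_X-H))=0$ because $p_g=0$ and $H$ is effective. Hence $h^0(\EE(-2H))=0$.

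Since $\EE^\vee\cong\EE(-3H-K_X)$, Serre duality gives $h^2(\EE(-H))=h^0(\EE(-2H))=0$.

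A direct Riemann--Roch check shows $\chi(\EE(-H))=\chi(\OO_X(K_X))+\chi(\II_Z(H))=0$. Combined with the hypothesis $h^0(\EE(-H))=0$, this forces $h^1(\EE(-H))=0$ too.

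\textbf{Step 3: cohomology of $\II_Z(H)$.} Now twist \eqref{eq:shape} by $-H$: $0\to\OO_X(K_X)\to\EE(-H)\to\II_Z(H)\to0$. Since $\EE(-H)$ has no cohomology at all, the long exact sequence gives
\[
H^0(\II_Z(H))\cong H^1(\OO_X(K_X)),\qquad H^1(\II_Z(H))\cong H^2(\OO_X(K_X)).
\]
So $h^0(\II_Z(H))=q$ and $h^1(\II_Z(H))=1$.

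From $0\to\II_Z(H)\to\OO_X(H)\to\OO_Z\to0$, the map $H^1(\II_Z(H))\to H^1(\OO_X(H))$ is surjective. Therefore $1\le h^1(\OO_X(H))\le1$, which rules out $h^1(\OO_X(H))\ge2$ outright. In the remaining case $h^1(\OO_X(H))=1$, this surjection is an isomorphism. Hence the restriction $H^0(\OO_X(H))\to H^0(\OO_Z)$ is surjective, i.e. $Z$ imposes independent conditions on $|H|$. (One uses $h^2(\OO_X(H))=h^0(K_X-H)=0$ throughout.)

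\textbf{Step 4: contradiction.} Because $Z$ imposes independent conditions, so does any colength-one subscheme $Z'\subset Z$, which exists since $\len(Z)\ge1$. Then $h^0(\II_{Z'}(H))=h^0(\II_Z(H))+1$. So some section of $\OO_X(H)$ vanishes on $Z'$ but not on $Z$, contradicting Cayley--Bacharach.

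The final sentence of the theorem then follows from Proposition~\ref{prop:reduction}.

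The main point to watch is Step 2. One must check that the vanishing of $h^0(\EE(-2H))$, together with the duality $\EE^\vee\cong\EE(-c_1)$, really kills $H^2(\EE(-H))$. It is exactly here that $p_g=0$ and $Z\ne\emptyset$ (hence $\chi(\OO_X(H))\ge q$) are used.
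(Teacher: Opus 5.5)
Your proof is correct and rests on the same collision as the paper's: Cayley--Bacharach for $|H|$ forces $Z$ to impose dependent conditions. Meanwhile $h^0(\EE(-H))=0$ and the Ulrich length $\len(Z)=\chi(\OO_X(H))+1-q$ force $Z$ to impose at least $\len(Z)-1+h^1(\OO_X(H))$ conditions on $|H|$.

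The paper reaches this more cheaply. It uses only the $H^0$ part of the twisted sequence to get $h^0(\OO_X(H)\otimes\II_Z)\le q$, then counts $h^0(\OO_X(H)\otimes\II_{Z'})\ge h^1(\OO_X(H))+q$. So your Step~2 (full vanishing of $H^\bullet(\EE(-H))$ via duality and Riemann--Roch) and the case split $h^1(\OO_X(H))\ge2$ versus $=1$ are not needed.
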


The proof (\S\ref{subsec:nogo}) exhibits a tension between the Cayley--Bacharach property and the vanishing $h^0(\EE(-H))=0$: the Ulrich condition fixes $\len(Z)=\chi(\OO_X(H))+\chi(\OO_X)$ and forces $Z$ to impose conditions on $|H|$ that are independent up to the irregularity, while local freeness forces $Z$ to impose \emph{dependent} conditions --- compatible only when $h^1(\OO_X(H))=0$. Of the six Ulrich vanishings only $h^0(\EE(-H))=0$ enters, together with the value of $c_2$ imposed by the two Riemann--Roch identities; by Proposition~\ref{prop:reduction} this is no loss of information, but it is what the proof needs, and it is worth recording as the hypothesis --- as is the fact that the shape \eqref{eq:shape} may be relaxed to an inclusion of $\OO_X(H+K_X)$ with torsion-free quotient (Remark~\ref{rem:nogo-more}). The auxiliary hypothesis $\chi(\OO_X(H))\ge q$ only guarantees $Z\neq\emptyset$, and when it fails the construction is excluded outright for $q\le3$ (Remark~\ref{rem:nogo-vacuous}). The closest statement we know of is \cite[Rem.~3.4]{Cas17}, where the adjacent converse question --- whether every rank two special Ulrich bundle arises from \eqref{eq:shape} --- is raised and left open; the necessity of non-speciality for the construction itself does not seem to have been recorded. A reader interested only in the regular case may set $q=0$ throughout.

The theorem is about the kernel $\OO_X(H+K_X)$ on the nose, but for $q=0$ its scope is larger: numerical twists and non-saturated adjoint kernels are excluded as well, both by exhibiting a section rather than by counting (Proposition~\ref{prop:numkernel}, Corollary~\ref{cor:numerical-nogo}), which matters on Burniat surfaces, where $\operatorname{Tors}\Pic(X)\cong(\ZZ/2)^6$. For $q>0$ the twisted kernels $\OO_X(H+K_X+\eta)$ used in \cite{Cas19} are genuinely not covered (Remark~\ref{rem:twisted-irregular}), and for $p_g>0$ the adjoint kernel fails at the outset (\S\ref{subsec:pg}).

The hypothesis is satisfied: special very ample divisors on surfaces with $p_g=q=0$ are classical, the \emph{special rational surfaces} of Alexander in $\PP^4$ \cite{Al88,Al92} providing them, the smallest being Okonek's surface of degree $8$ and sectional genus $6$ \cite{Ok86}, whose point configurations were determined by Catanese and Hulek \cite{CH97} and for which $h^1(\OO_S(H))=1$ (Example~\ref{ex:special-rational}). Two consequences quantify how restrictive speciality is (\S\ref{subsec:cost}): a \emph{genus obstruction} (Corollary~\ref{cor:genus}) showing that the mechanism of \cite{An94}, fed with the Ulrich numerology, secretly implies $h^1(\OO_X(H))=0$ and so can only certify polarizations that were non-special all along; and a \emph{restriction identity} (Lemma~\ref{lem:restriction}) identifying the speciality of a polarization with that of a general hyperplane section and bounding it by $\tfrac12H\cdot K_X+1$, so that a special polarization forces $H\cdot K_X\ge0$. On del Pezzo, rational elliptic, Enriques and bielliptic surfaces no polarization is special at all, but the reason is Kawamata--Viehweg rather than the sign of $H\cdot K_X$, which vanishes identically on the last two (Remark~\ref{rem:restriction}(ii)).

\smallskip
\noindent\emph{Higher rank.} As in \cite{CHGS12,Cas17,CasWild}, the rank two bundles generate large families in higher rank. That the surfaces of Theorem~\ref{thm:main} are Ulrich wild is already known, and under exactly the hypothesis of part~(b) below: \cite[Lem.~5.2]{Cas17} states that such a surface with $\pi(H)\ge1$ and $H^2+1\ge K_X^2$ is Ulrich wild, and proves it from the same $\Ext$ estimate fed to the same criterion of Faenzi--Pons-Llopis \cite{FPL}. In the sharper form \cite[Thm.~1.3]{Cas17} the condition on $H^2-K_X^2$ disappears altogether: such a surface is Ulrich wild if and only if either $\pi(H)\ge1$, or $\pi(H)=0$ and $H^2\ge5$, where $\pi(H)=\tfrac12H\cdot(H+K_X)+1$. What the Cayley--Bacharach bundles add is therefore not wildness but finer information --- \emph{simplicity} rather than indecomposability, control of the rank, an explicit dimension count, and independence from \cite[Thm.~1.2]{Cas17} --- deduced from the criterion of \cite{FPL} once the bundles of Theorem~\ref{thm:main} are known to be generically pairwise non-isomorphic (Lemma~\ref{lem:noniso}).

\begin{theorem}\label{thm:evenrank}
In the situation of Theorem~\ref{thm:main}, assume moreover that $X$ is minimal of non-negative Kodaira dimension. Then:
\begin{enumerate}[label=\textup{(\alph*)}]
\item if $H^2+4>K_X^2$, then $X$ carries simple Ulrich bundles of rank $4$;
\item if $H^2+4-K_X^2\ge 3$ \textup{(}automatic, e.g., if $X$ is minimal of general type with $p_g=q=0$ and $H^2\ge 8$, since then $K_X^2\le 9$\textup{)}, then $X$ is strictly Ulrich wild: for every even $r\ge2$ it carries simple Ulrich bundles of rank $r$, moving in families of arbitrarily large dimension as $r$ grows.
\end{enumerate}
\end{theorem}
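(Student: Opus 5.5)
The strategy is to apply the Faenzi--Pons-Llopis criterion \cite{FPL} to two non-isomorphic special Ulrich bundles of rank two furnished by Theorem~\ref{thm:main}. The criterion says that if $\EE_1,\EE_2$ are simple Ulrich bundles with $\Hom(\EE_i,\EE_j)=0$ for $i\neq j$ and $\dim\Ext^1(\EE_2,\EE_1)\ge3$, then $X$ is strictly Ulrich wild. If only $\dim\Ext^1\ge1$, non-split extensions $0\to\EE_1\to\FF\to\EE_2\to0$ are still simple Ulrich bundles, of rank $4$.

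\emph{Choosing the bundles.} By Lemma~\ref{lem:noniso}, a general choice of $Z$ gives pairwise non-isomorphic bundles. Corollary~\ref{cor:stable} (or \cite[Thm.~1.2]{Cas17} for general $Z$) gives $\mu_H$-stability. For a minimal surface of non-negative Kodaira dimension we are not in the scroll or $\PP^2$ case, so stability holds for general $Z$. Stable bundles with the same slope and the same Chern classes have $\Hom(\EE_i,\EE_j)=0$ for $i\ne j$ and are simple. Extensions of Ulrich bundles are Ulrich, since the vanishings \eqref{eq:ulrich-vanishing} are preserved by the long exact sequence.

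\emph{The $\Ext$ estimate.} By Serre duality and stability, $\Ext^2(\EE_2,\EE_1)\cong\Hom(\EE_1,\EE_2(K_X))^\vee$. Since $H\cdot K_X\ge0$ for $X$ minimal with $\kappa\ge0$, this group should vanish. One has $\mu(\EE_2(K_X))=\mu(\EE_2)+H\cdot K_X\ge\mu(\EE_1)$, so stability alone does not immediately kill it; this is the step I expect to need care. First I would try the extension \eqref{eq:shape}. A map $\EE_1\to\EE_2(K_X)$ restricted to $\OO_X(H+K_X)$ lands in $\OO_X(H+2K_X)$ or in $\OO_X(2H+K_X)\otimes\II_Z$, and I would control these pieces using $p_g=0$ together with the genericity of the $0$-cycles. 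Failing that, I would simply note that the lower bound only needs $-\chi(\EE_2,\EE_1)$, and handle $\Ext^2$ by the same inequality used in \cite[Lem.~5.2]{Cas17}.

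Granting $\Hom=\Ext^2=0$, Riemann--Roch gives
\[
\dim\Ext^1(\EE_2,\EE_1)=-\chi(\EE_2^\vee\otimes\EE_1)=-\bigl(4\chi(\OO_X)+\tfrac12 c_1(\EE_2^\vee\otimes\EE_1)^2-c_2(\EE_2^\vee\otimes\EE_1)\bigr).
\]
Here $c_1(\EE_2^\vee\otimes\EE_1)=0$ and $c_2(\EE_2^\vee\otimes\EE_1)=4c_2-c_1^2$. Substituting $c_1=3H+K_X$, \eqref{eq:c2special} and $\chi(\OO_X)=1$ gives $4c_2-c_1^2=H^2-K_X^2+8$, hence
\[
\dim\Ext^1(\EE_2,\EE_1)=H^2-K_X^2+4.
\]
Part~(a) follows when this number is positive, and part~(b) when it is at least $3$, which is exactly the hypothesis. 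The parenthetical claim follows from $K_X^2\le9$ (Bogomolov--Miyaoka--Yau). The dimension of the families of rank-$r$ bundles grows with $r$, as in \cite{FPL}.
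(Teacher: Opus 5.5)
Your proposal is correct and has the same architecture as the paper's proof. You take an orthogonal pair of rank two special Ulrich bundles and use the Riemann--Roch estimate $\dim\Ext^1(\EE_2,\EE_1)\ge-\chi(\EE_2^\vee\otimes\EE_1)=H^2+4-K_X^2$. Non-split extensions then give (a), and the Faenzi--Pons-Llopis criterion gives (b).

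The one real divergence is how you produce the pair. Corollary~\ref{cor:stable} alone does not give stability, since it leaves open the case where $X$ carries an Ulrich line bundle. Your route therefore rests on \cite[Thm.~1.2]{Cas17}. Intersecting its open set of cycles with the one of Lemma~\ref{lem:noniso} gives two non-isomorphic stable members, which are simple and $\Hom$-orthogonal. That is legitimate and shorter.

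The paper deliberately avoids \cite[Thm.~1.2]{Cas17} and splits into two cases.
\begin{enumerate}
\item Uncountably many $\EE_Z$ are stable. This is your case.
\item Uncountably many $\EE_Z$ are strictly semistable. Here the paper uses countability of $\Pic(X)$ to fix the pair $(M_1,M_2)$ of Ulrich line bundles. It excludes $M_1\cong M_2$ numerically via $H^2+4>K_X^2$. An extension-class chase then shows that distinct non-split extensions are simple and mutually orthogonal.
\end{enumerate}
The paper's route buys independence from Casnati's stability theorem for general cycles; yours buys brevity.

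Two smaller points.

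First, your worry about $\Ext^2$ is unnecessary, and your displayed equality should be an inequality. The identity $\dim\Ext^1=\dim\Hom+\dim\Ext^2-\chi$ gives $\dim\Ext^1\ge-\chi$ with no vanishing at all, and that inequality is exactly what the paper uses. Moreover $\Ext^2(\EE_2,\EE_1)\cong\Hom(\EE_1,\EE_2(K_X))^\vee$ need not vanish when $H\cdot K_X>0$.

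Second, for the claim about every even rank with families of growing dimension, the paper does more than defer to \cite{FPL}. It uses full faithfulness of the functor $\Phi$ to send bricks to \emph{simple} bundles of rank $2(a+b)$. It also proves (Lemma~\ref{lem:bricks}) that for balanced $(a,b)$ with $|a-b|\le1$, bricks of the $d$-Kronecker quiver exist and form a family of dimension $1-q_d(a,b)$. That dimension grows quadratically in $r$. You should at least name these two ingredients.
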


Only even ranks occur, for a structural reason: the construction feeds an orthogonal \emph{pair} of rank two bundles to the Kronecker quiver functor of \cite{FPL}, which produces bundles of rank $2(a+b)$ (Remark~\ref{rem:evenonly}).

\smallskip
\noindent\emph{An application: primary Burniat surfaces.} Theorem~\ref{thm:nogo} raises the question of where special polarizations actually live. On the most symmetric surfaces of general type with $p_g=q=0$ we answer it completely, and the answer is that they do not live there at all. A \emph{primary Burniat surface} is a minimal surface of general type with $p_g=q=0$ and $K_X^2=6$, a $(\ZZ/2)^2$-cover of the del Pezzo surface $Y$ of degree six branched over the nine Burniat lines and the three exceptional curves (\S\ref{subsec:burniat}). Recall the convention of \S\ref{subsec:ulrich}, that a \emph{polarization} is a \emph{very ample} divisor: part (a) is sharp as stated and false for ample divisors, since $X$ does carry special ample classes (part (b)).

\needspace{6\baselineskip}
\begin{theorem}\label{thm:burniat}
Let $X$ be a primary Burniat surface.
\begin{enumerate}[label=\textup{(\alph*)}]
\item \emph{Every} ample and base point free divisor on $X$ is non-special; in particular so is every polarization. Hence $(X,H)$ carries a $\mu_H$-stable special Ulrich bundle of rank two for every polarization $H$, and $X$ is strictly Ulrich wild with respect to \emph{every} polarization: no additional numerical hypothesis on $H^2$ is needed, since every very ample divisor on $X$ satisfies $H^2\ge18$.
\item All special ample classes are confined to three numerical rays through $K_X$: if $H$ is ample and $h^1(\OO_X(H))\neq0$, then
\[
H\;\equiv\;K_X+c\,(h-f_i)
\]
for some $i\in\{1,2,3\}$ and some integer $c\ge0$, the vertex $c=0$ included. The rays do carry special classes: at their common vertex, three of the $63$ non-trivial torsion twists $K_X+\tau$ are special, and on each ray the classes $H_u$ with $c=2u$, $u\ge2$, are special, with $h^\bullet(\OO_X(H_u))=(3u,u-1,0)$. Not every class on a ray is special. No class on the rays is base point free.
\item For $u\ge1$, $H_u=K_X+\pi^*\OO_Y(u(e_0-e_1))$ and any of the four ramification curves $\varepsilon\in\{\varepsilon_2,\varepsilon_3,\sigma_{12},\sigma_{13}\}$ of $H_u$-degree one, one has $h^\bullet(\OO_X(H_u+\varepsilon))=(3u+1,u,0)$; for $u\ge2$ no twisted-kernel extension built from these data satisfies the Ulrich vanishings.
\end{enumerate}
\end{theorem}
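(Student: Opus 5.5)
The plan is to reduce every part to explicit cohomology computations on the bidouble cover $\pi\colon X\to Y$, where $Y$ is the degree six del Pezzo surface. For a $(\ZZ/2)^2$-cover with branch data $D_1,D_2,D_3$ and building data $L_1,L_2,L_3$, one has $\pi_*\OO_X=\OO_Y\oplus\bigoplus_i L_i^{-1}$. First I will write an arbitrary divisor class on $X$, modulo the $(\ZZ/2)^6$ torsion, in terms of $\pi^*\Pic(Y)$ and the ramification curves. Then
\[
h^1(\OO_X(D))=\sum_\chi h^1(Y,\OO_Y(M_\chi)),
\]
where the sum runs over the four characters $\chi$ and the line bundles $M_\chi$ are explicit on $Y$. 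On $Y$ the cohomology of line bundles is controlled by the root system $A_2\times A_1$ and by the $(-1)$-curves. A class $M_\chi$ has $h^1\neq0$ only if it meets some $(-1)$-curve or some conic class $h-e_i$ (equivalently $h-f_i$ after pulling back) with degree at most $-2$, or is of the form $-(\text{fibre})$ in specific patterns.

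For part (a), I take $H$ ample and base point free and use the restriction identity (Lemma~\ref{lem:restriction}), which bounds the speciality by the speciality of a general smooth curve $C\in|H|$. Base point freeness gives $H\cdot E\ge2$ on the preimages of the $(-1)$-curves and of the conic fibres; those preimages are elliptic or rational curves of small $K$-degree. I then check that each eigen-summand $M_\chi$ stays in the range where Kawamata--Viehweg on $Y$ (or nefness of $M_\chi-K_Y$) kills $h^1$. The identity $K_X\equiv\pi^*(-K_Y)$ numerically, together with $K_X^2=6$, lets me express $H-K_X$ and test nefness against the finitely many extremal rays of $\NE(X)$ coming from $Y$. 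The statement then follows from Theorem~\ref{thm:main}, Corollary~\ref{cor:stable} and Theorem~\ref{thm:evenrank}. The inequality $H^2\ge18$ for very ample $H$ comes from $H\cdot K_X\ge 6$ (ampleness against the three curves over each $(-1)$-curve) combined with Hodge index and parity. It also excludes Ulrich line bundles, via the cited result of Cho.

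For part (b), suppose $H$ is ample with $h^1\neq0$. The computation above says some $M_\chi$ has $h^1\neq0$, which forces $M_\chi$ to be very negative on one conic pencil $h-f_i$ while near-trivial on the transversal directions. Translating back, $H-K_X$ is then numerically a nonnegative multiple of $h-f_i$. For the explicit classes $H_u$, I compute the four eigen-summands directly as $\OO_Y(u(e_0-e_1))$ twisted by $-L_i$. Each is a sum over fibres of $\PP^1$-bundle type, and its cohomology is given by $h^\bullet(\PP^1,\OO(k))$ summed over the pencil. This gives $(3u,u-1,0)$. The torsion-twist count at $c=0$ is done by the same character decomposition, checking which of the $63$ twists have some $M_\chi\cong\OO_Y(-\text{conic})$-type summand. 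Non-base-point-freeness on the rays then follows from part (a) applied contrapositively to the special classes, and directly for the non-special ones from a curve $E$ with $H\cdot E=1$ whose preimage is rational.

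Part (c) repeats the eigen-decomposition for $H_u+\varepsilon$. For the Ulrich failure, the Cayley--Bacharach system becomes $|H_u+\varepsilon|$. The value $h^1=u\ge2$ combined with the no-go counting of \S\ref{subsec:nogo} shows that the length forced by $c_2$ cannot be reconciled with $h^0(\EE(-H_u))=0$. The main obstacle is the classification step in (b): proving that speciality forces the ray shape requires a careful case analysis of which line bundles on $Y$ have nonvanishing $h^1$. I would first try to settle it via the Zariski decomposition of each $M_\chi$ against the hexagon of $(-1)$-curves.
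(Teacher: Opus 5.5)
The skeleton of your (a), testing $H-K_X$ on the extremal rays of $\NE(X)$ coming from $Y$, is the right one. However, the step it rests on, $H\cdot C\ge2$ for the six curves $\varepsilon_i,\sigma_{ij}$, is asserted with no mechanism. Your description of the curves as ``elliptic or rational'' shows what is missing: base point freeness says nothing about degree one on a rational curve. The needed fact is that a base point free divisor has $H\cdot C\neq1$ on every smooth curve $C$ of genus $\ge1$. Indeed, a degree one line bundle on such a $C$ has $h^0\le1$, so the restricted system is spanned by one section, and its single zero is a base point. You also need that the six curves are elliptic (double covers of $\PP^1$ branched at four points). Since $K_X\cdot C=1$ on each of them, $H-K_X$ is then positive on the generators of $\NE(X)$, hence ample, and Kodaira gives $h^1(\OO_X(H))=0$. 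The restriction identity plays no role. The eigen-summand bookkeeping is not available in the form you state, because a general $H$ is not of the form $K_X+\pi^*F$ and its cohomology depends on the torsion twist. (Also, $2K_X=\pi^*(-K_Y)$, not $K_X\equiv\pi^*(-K_Y)$.)

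The same lemma is what the ray classes need. Your witness, ``a curve with $H\cdot E=1$ whose preimage is rational'', obstructs nothing. The right witness is the elliptic curve $\varepsilon_j$, $j\neq i$, which has $H\cdot\varepsilon_j=K_X\cdot\varepsilon_j=1$ for every $c$.

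Your route to $H^2\ge18$ fails outright. Hodge index gives $6H^2\le(H\cdot K_X)^2$, which is an \emph{upper} bound, and ampleness alone only gives $H\cdot K_X\ge6$. What is needed is $h^0(\OO_X(H))\ge4$ for very ample $H$ together with $h^1=h^2=0$, i.e.\ $H^2-H\cdot K_X\ge6$, combined with the base point free bound $H\cdot K_X\ge12$.

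In (b), the classification you flag as the main obstacle has a short proof that your plan lacks. Ampleness gives $H\cdot C\ge1=K_X\cdot C$ on the six generators, so $H-K_X$ is nef. It cannot be big, since otherwise Kawamata--Viehweg kills $h^1$. Hence $(H-K_X)^2=0$, and via the isometry $\iota$ the nef classes of square zero on $Y$ are exactly the multiples of the conic classes $e_0-e_i$. A case analysis of eigen-summands is not available for general $H$. Nor is it available at the vertex, where $K_X+\tau$ with $\tau\neq0$ is never of the form $K_X+\pi^*F$; the paper takes the count of three special twists from \cite{Cho26}.

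For $H_u$, your summands $F-L_\chi$ compute $H^\bullet(\OO_X(\pi^*F))$, whose class is $2u(h-f_1)$, not $H_u$. The correct summands are $K_Y+L_\chi+F$, coming from $\pi_*\omega_X=\bigoplus_\chi\omega_Y\otimes L_\chi$. For $H_u+\varepsilon$ you need the ramification-twisted version of this decomposition, in which the two summands for the characters trivial on the inertia of $\Gamma$ acquire $\Gamma$.

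In (c), the Cayley--Bacharach system is $|H+2\varepsilon|$, not $|H_u+\varepsilon|$, and it is irrelevant to the obstruction. The counting of Theorem~\ref{thm:nogo} does not transfer, because the vanishing system and the Cayley--Bacharach system now differ. What closes the argument is the size of the absorber. One has $h^1(\OO_X(K_X-\varepsilon))=1$, from $\chi=0$, $h^0=0$ and $h^2=h^0(\OO_X(\varepsilon))=1$. On the other hand, $\len(Z)=2u+1$ and $h^0(\OO_X(H_u+\varepsilon))=3u+1$ give $h^0(\OO_X(H_u+\varepsilon)\otimes\II_Z)\ge u\ge2$. So $\delta_e$ is never injective, and $h^0(\EE(-H_u))\neq0$.
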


\smallskip
\noindent\emph{Relation with the work of Cho.} The Burniat half of this paper overlaps with \cite{Cho26}, and the overlap deserves a precise statement here rather than in a footnote. The geometric input of part~(a) is a degree bound: an ample and base point free divisor on $X$ has degree at least two on each of the six elliptic ramification curves. That bound is \cite[Prop.~5.4]{Cho26}, obtained there independently and by the same mechanism; we derive it in three lines from an elementary statement about base point free divisors which is not particular to Burniat surfaces: \emph{a base point free divisor on a smooth projective surface has intersection number different from one with every smooth irreducible curve of genus at least one} (Lemma~\ref{lem:bpf-degree}). The six ramification curves are elliptic, and the bound follows. What is not in \cite{Cho26} is the step taken from it: since $K_X$ has degree one on each of the six and the six span $\NE(X)$ (Lemma~\ref{lem:mori}), the bound forces $H-K_X$ to be ample, and Kodaira vanishing then gives $h^1(\OO_X(H))=0$ for \emph{every} polarization. In \cite{Cho26} non-speciality is used only pointwise, for $H=3K_X$, in order to invoke \cite{Cas17}; the global form --- so that the construction runs on $X$ unconditionally, with the wildness consequences of \S\ref{sec:evenrank} --- is not formulated there.

Part~(b) overlaps as well, and unevenly. The passage from $h^1(\OO_X(H))\neq0$ to $(H-K_X)^2=0$ follows in two lines from \cite[Prop.~4.2]{Cho26} and the vanishing step of \cite[Algorithm~4.18(3.1)]{Cho26}; the identification of the ample classes with $(H-K_X)^2=0$ as the three rays is \cite[Prop.~4.10]{Cho26}, there under the additional hypothesis that all $64$ torsion twists in the numerical class be effective. Our route descends to $Y$ and needs no effectivity, so it covers the vertex $c=0$ --- where that hypothesis fails, $e([K_X])$ being $63$ --- and the vertex is where the cheapest special ample classes live. The cohomology along the rays is given in greater generality by \cite[Prop.~4.27]{Cho26}, for every $c$ and every twist, whereas Proposition~\ref{prop:ray} computes the untwisted subfamily with $c$ even; we keep that computation because it is independent of the algorithm and because the same technique yields the ramification-twisted groups of part~(c). The failure of base point freeness on the rays is again a special case of the same degree bound. The explicit ramification-twisted formula of part~(c), and the twisted-kernel obstruction deduced from it, are not stated in \cite{Cho26}, although the cohomology groups involved are in principle accessible through the algorithm there; the whole of \S\ref{sec:ulrich} has no counterpart in \cite{Cho26}.

\smallskip
\noindent\emph{Asymptotics.} Coskun and Huizenga proved, from their asymptotic Brill--Noether theorem, that every smooth projective surface with an ample $H$ carries rank two Ulrich bundles for $mH$, $m\gg0$ \cite[Thm.~1.2]{CH20}; their proof is far from elementary. Section~\ref{sec:asymptotic} examines what \cite{LQ94,An94} yield here: for $p_g=q=0$, the conclusion for every $m$ with $mH-K_X$ nef and big (Proposition~\ref{prop:asymptotic}), the statement in this range being due to Beauville \cite{Bea16b} and the contribution of the elementary proof being the explicit threshold; and beyond it a budget identity (Proposition~\ref{prop:budget}) showing that the counting obstructions close exactly at the boundary whenever the auxiliary curve is rigid, so that what is missing is not a numerical but a positional statement about $0$-cycles (Question~\ref{q:cycles}).

\subsection*{Conventions}
We work over $\CC$. We write $H^i(\FF)$ for $H^i(X,\FF)$ and $h^i$ for its dimension; $K=K_X$ is the canonical divisor, $p_g=h^0(K)$, $q=h^1(\OO_X)$, $\chi=\chi(\OO_X)$. For a $0$-dimensional $Z\subset X$, $\len(Z)$ is its length and $\II_Z$ its ideal sheaf. Stability is slope stability in the sense of Mumford--Takemoto, as in \cite{LQ94}. Following \S\ref{subsec:ulrich}, a \emph{polarization} is always a \emph{very ample} divisor; classes that are ample but not base point free, such as the $H_u$ of Proposition~\ref{prop:ray}, are called ample classes and never polarizations. \emph{Base point free} and \emph{globally generated} are used interchangeably. Finally we record a triviality used repeatedly: \emph{if $p_g=0$ and $D$ is a non-zero effective divisor, then $h^0(\OO_X(K_X-D))=0$}, since multiplication by a non-zero section of $\OO_X(D)$ embeds $H^0(\OO_X(K_X-D))$ into $H^0(\OO_X(K_X))=0$; in particular $h^2(\OO_X(D))=0$ and $h^0(\OO_X(D))=\chi(\OO_X(D))+h^1(\OO_X(D))$.

\section{The two constructions}\label{sec:methods}

This section collects the two mechanisms on which everything below rests: the genericity package of Li and Qin \cite{LQ94}, recalled in \S\ref{subsec:LQ}, and the points-on-a-curve device of \cite{An94}, formalized in \S\ref{subsec:Anghel}. Both feed the extension criterion of \S\ref{subsec:CB}, which we record first.

\subsection{The Cayley--Bacharach property and extensions}\label{subsec:CB}
The starting point of both \cite{LQ94} and \cite{An94} is the following classical criterion (see \cite[p.~731]{GH78book}, \cite[Prop.~2.2]{LQ94}).

\begin{definition}\label{def:CB}
Let $|D|$ be a (possibly empty) complete linear system on $X$ and $Z\subset X$ a $0$-dimensional subscheme. We say that $Z$ satisfies the \emph{Cayley--Bacharach property with respect to $|D|$}, written CB$(|D|)$, if for every subscheme $Z'\subseteq Z$ of colength one,
\[
H^0\bigl(\OO_X(D)\otimes\II_{Z'}\bigr)=H^0\bigl(\OO_X(D)\otimes\II_{Z}\bigr).
\]
For $Z$ reduced this is the classical statement: every curve in $|D|$ containing all but one point of $Z$ contains the remaining point.
\end{definition}

\begin{proposition}[Serre, Griffiths--Harris]\label{prop:CB}
Let $L',L''\in\Pic(X)$ and let $Z\subset X$ be a $0$-dimensional \emph{locally complete intersection} subscheme. There exists an extension
\begin{equation}\label{eq:ext}
0\longrightarrow \OO_X(L')\longrightarrow \EE\longrightarrow \OO_X(L'')\otimes\II_Z\longrightarrow 0
\end{equation}
with $\EE$ \emph{locally free} if and only if $Z$ satisfies \textup{CB}$(|L''-L'+K_X|)$. In particular this holds if
\begin{equation}\label{eq:strongCB}
h^0\bigl(\OO_X(L''-L'+K_X)\otimes\II_{Z'}\bigr)=0\qquad\text{for every colength-one }Z'\subseteq Z,
\end{equation}
cf.\ \cite[Cor.~2.3]{LQ94}; see \cite{Arr07} for a detailed account of the Hartshorne--Serre correspondence.
\end{proposition}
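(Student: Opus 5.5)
The plan is to prove the proposition by the Serre correspondence: classify extensions \eqref{eq:ext} by $\Ext^1(\OO_X(L'')\otimes\II_Z,\OO_X(L'))$, compute this group with the local-to-global spectral sequence, and reduce local freeness of $\EE$ to a linear-algebra condition on $H^0(\OO_Z)$ that unwinds to CB$(|D|)$, where
\[
D=L''-L'+K_X .
\]

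\emph{Step 1 (local structure).} Since $Z$ is a local complete intersection of codimension two, the Koszul resolution gives $\mathcal{H}om(\II_Z,\OO_X)=\OO_X$ and $\mathcal{E}xt^1(\II_Z,\OO_X)\cong\mathcal{E}xt^2(\OO_Z,\OO_X)\cong\omega_Z$. Moreover $\omega_Z$ is locally free of rank one over $\OO_Z$, because $Z$ is Gorenstein. Twisting, we get $\mathcal{H}om(\OO_X(L'')\otimes\II_Z,\OO_X(L'))\cong\OO_X(L'-L'')$ and $\mathcal{E}xt^1(\OO_X(L'')\otimes\II_Z,\OO_X(L'))\cong\omega_Z$, up to a trivializable twist on $Z$.

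\emph{Step 2 (Serre's criterion).} For an extension class $\xi$, we invoke Serre's local criterion: the middle term $\EE$ is locally free if and only if the image of $\xi$ in $H^0(\mathcal{E}xt^1)=H^0(\omega_Z)$ generates $\omega_Z$ as an $\OO_Z$-module at every point of $Z$. This is a local statement about extensions of $\II_{Z,p}$ by $\OO_{X,p}$: the class generates the local Ext exactly when the extension is locally the Koszul one, whose middle term is $\OO^{\oplus2}$.

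\emph{Step 3 (global image and duality).} The low-degree exact sequence of the spectral sequence reads
\[
\Ext^1\longrightarrow H^0(\omega_Z)\longrightarrow H^2(\OO_X(L'-L'')) .
\]
Serre duality on $X$, together with Grothendieck duality on $Z$ (which gives $H^0(\omega_Z)=H^0(\OO_Z)^\vee$), identifies the second arrow with the dual of the restriction map
\[
r\colon H^0(\OO_X(D))\to H^0(\OO_X(D)\otimes\OO_Z).
\]
Hence the classes coming from global extensions are exactly the functionals $\varphi\in H^0(\OO_Z)^\vee$ that vanish on $V=\operatorname{im} r$. I expect this to be the main technical point: one must check that the two dualities are compatible, i.e.\ that the connecting map is the transpose of restriction.

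\emph{Step 4 (linear algebra).} It remains to decide when $V^\perp$ contains a generator of $\omega_Z$. A functional $\varphi$ generates $\omega_Z\cong\operatorname{Hom}(\OO_Z,\CC)$ if and only if it is nonzero on the socle of each local ring $\OO_{Z,p}$. By the Gorenstein property each socle is one-dimensional, so there are finitely many such socles, one for each point $p\in Z$. Each socle is $\II_{Z'}/\II_Z$ for the unique colength-one $Z'\subseteq Z$ differing from $Z$ at $p$; since lci implies Gorenstein, these are all the colength-one subschemes. The set of non-generators in $V^\perp$ is the finite union of the subspaces $V^\perp\cap(\II_{Z'}/\II_Z)^\perp$. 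Over the infinite field $\CC$, a vector space is not a finite union of proper subspaces, so a generator exists if and only if each of these subspaces is proper in $V^\perp$. That is, we need $\II_{Z'}/\II_Z\not\subseteq V$ for every $Z'$.

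\emph{Step 5 (conclusion).} The condition $\II_{Z'}/\II_Z\subseteq V$ means that some section of $\OO_X(D)$ vanishes on $Z'$ but not on $Z$. Its failure for every $Z'$ is exactly $H^0(\OO_X(D)\otimes\II_{Z'})=H^0(\OO_X(D)\otimes\II_Z)$, i.e.\ CB$(|D|)$. This proves the equivalence. The sufficient condition \eqref{eq:strongCB} makes both sides of the CB equality zero, so CB$(|D|)$ holds trivially.
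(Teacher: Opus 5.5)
Your proof is correct and is the standard Hartshorne--Serre argument: the local-to-global spectral sequence for $\Ext^1$, Serre's local generation criterion on $\omega_Z$, duality identifying the globally realizable classes with $V^\perp$, and the socle/finite-union argument turning local freeness into CB$(|L''-L'+K_X|)$. The paper does not reprove the statement but cites Griffiths--Harris and Li--Qin, where this is the argument. Its only addition is the remark that in the ``only if'' direction the l.c.i.\ hypothesis follows automatically from Hilbert--Burch. Your argument does not need that remark, since the statement assumes $Z$ is l.c.i.
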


Two remarks on the hypotheses. First, every \emph{reduced} $0$-dimensional subscheme is locally complete intersection, so the hypothesis is harmless for the reduced cycles used throughout this note. Second, the l.c.i.\ condition is needed only in the \emph{existence} direction: if an extension \eqref{eq:ext} with $\EE$ locally free exists for some $0$-dimensional subscheme $Z$, then locally $\II_Z$ admits a length-one resolution $0\to\OO\to\OO^{2}\to\II_Z\to0$, so $\II_Z$ is locally generated by two elements --- this is the Hilbert--Burch theorem in its simplest case \cite[Thm.~20.15]{Eis95} --- and, an ideal of height two generated by two elements in a two-dimensional regular local ring being generated by a regular sequence, $Z$ is \emph{automatically} locally complete intersection. The hypothesis of the equivalence above is therefore met a posteriori, and its ``only if'' direction applies: $Z$ satisfies CB$(|L''-L'+K_X|)$. Thus the necessary direction, which is all that the no-go Theorem~\ref{thm:nogo} uses, holds for arbitrary $0$-dimensional $Z$, the l.c.i.\ hypothesis being a consequence rather than an assumption.

For a bundle $\EE$ as in \eqref{eq:ext} one has $c_1(\EE)=L'+L''$ and $c_2(\EE)=L'\cdot L''+\len(Z)$.

\subsection{The generic mechanism of Li--Qin}\label{subsec:LQ}
The engine of \cite{LQ94} consists of two elementary genericity lemmas, which we quote in the form we need (they are Lemmas 2.4 and 2.5 of \cite{LQ94}).

\begin{lemma}[{\cite[Lem.~2.4]{LQ94}}]\label{lem:LQ24}
Let $M$ be a divisor on $X$ and let $Z\in\Hilb^{\len}(X)$ be generic. If $\len\ge h^0(\OO_X(M))$ then $h^0(\OO_X(M)\otimes\II_Z)=0$.
\end{lemma}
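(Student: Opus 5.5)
The plan is to show that the locus of $Z\in\Hilb^{\len}(X)$ with $h^0(\OO_X(M)\otimes\II_Z)=0$ is open and non-empty. Since $X$ is a smooth surface, $\Hilb^{\len}(X)$ is irreducible (Fogarty), so this locus is then dense, which is what ``generic'' means. If $h^0(\OO_X(M))=0$ there is nothing to prove, so assume $n:=h^0(\OO_X(M))>0$ and $\len\ge n$.

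\emph{Openness.} Let $\mathcal Z\subset\Hilb^{\len}(X)\times X$ be the universal family, with projection $p$ to the Hilbert scheme. Since $p|_{\mathcal Z}$ is finite and flat of degree $\len$, the sheaf $\mathcal V:=p_*(\OO_{\mathcal Z}\otimes\OO_X(M))$ is locally free of rank $\len$. Restricting global sections gives a morphism of vector bundles
\[
\mathrm{ev}\colon H^0(\OO_X(M))\otimes\OO_{\Hilb}\longrightarrow\mathcal V .
\]
At the point $[Z]$ its fibre is $H^0(\OO_X(M))\to H^0(\OO_Z(M))$, whose kernel is $H^0(\OO_X(M)\otimes\II_Z)$. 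Therefore the required vanishing holds exactly where $\mathrm{ev}$ has rank $n$, which is possible because $n\le\len$. Maximal rank is an open condition, given by the non-vanishing of some $n\times n$ minor. I expect this to be the only step needing care, and the vector-bundle formulation above avoids any appeal to general semicontinuity theorems.

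\emph{Non-emptiness.} I will construct reduced points $p_1,\dots,p_{\len}$ inductively. Set $V_k=H^0(\OO_X(M)\otimes\II_{\{p_1,\dots,p_k\}})$ and suppose $V_k\ne0$. Any non-zero section of $V_k$ vanishes on a curve or a finite set, so the common zero locus of $V_k$ is a proper closed subset of $X$. Choose $p_{k+1}$ outside this subset and distinct from the earlier points. Then evaluation at $p_{k+1}$ is a non-zero linear form on $V_k$, so $\dim V_{k+1}=\dim V_k-1$. After $n$ steps we reach $V_n=0$. The remaining $\len-n$ points may be chosen freely and distinct, since $V_k$ stays zero. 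The reduced subscheme $Z=\{p_1,\dots,p_{\len}\}$ then satisfies $h^0(\OO_X(M)\otimes\II_Z)=0$.

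If one prefers to avoid Fogarty's theorem, the same argument runs on the open subset of reduced cycles $X^{(\len)}\setminus\Delta$. This set is irreducible as a quotient of an open subset of $X^{\len}$, and it is the only locus used for the reduced cycles in the rest of the paper.
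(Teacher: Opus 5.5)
Your proof is correct. The paper gives no argument of its own here, only the citation \cite[Lem.~2.4]{LQ94}. Your route is the standard proof of that lemma: openness from the rank of the evaluation map $H^0(\OO_X(M))\otimes\OO_{\Hilb}\to\mathcal V$ to the tautological rank-$\len$ bundle, non-emptiness by adding points one at a time off the base locus of the shrinking system, and density from Fogarty's irreducibility theorem (or from irreducibility of the locus of reduced cycles, which is all the paper uses).
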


\begin{lemma}[{\cite[Lem.~2.5]{LQ94}}]\label{lem:LQ25}
Let $M$ be a divisor on $X$ and let $\len$ be an integer with $\len\ge p_g$ and $\len\ge h^0(\OO_X(M))$. Then a generic reduced $Z\in\Hilb^{\len+1}(X)$ satisfies \eqref{eq:strongCB} for the system $|M|$; in particular $Z$ satisfies \textup{CB}$(|M|)$.
\end{lemma}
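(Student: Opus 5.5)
The plan is to reduce the statement to Lemma~\ref{lem:LQ24}, applied to each of the $\len+1$ colength-one subschemes of $Z$ simultaneously, by working on the open set of configurations of distinct points. Since $Z$ is required to be reduced and generic, I will identify the relevant part of $\Hilb^{\len+1}(X)$ with the quotient by the symmetric group of the open set $U_{\len+1}\subset X^{\len+1}$ of $(\len+1)$-tuples of pairwise distinct points. The colength-one subschemes of a reduced $Z=\{x_0,\dots,x_\len\}$ are then exactly the $\len+1$ sets $Z_j=Z\setminus\{x_j\}$, each reduced of length $\len$.

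First I would make Lemma~\ref{lem:LQ24} explicit for reduced cycles, by the standard induction on the number of points. Fix a nonzero subspace $V\subseteq H^0(\OO_X(M))$. The locus of points at which every section in $V$ vanishes is a proper closed subset of $X$. Hence a point $x$ chosen outside it imposes one nontrivial condition, and $\dim\bigl(V\cap H^0(\OO_X(M)\otimes\II_x)\bigr)=\dim V-1$. Iterating, I obtain a nonempty Zariski open set $W_\len\subseteq U_\len$ consisting of $\len$-tuples for which
\[
h^0\bigl(\OO_X(M)\otimes\II_{\{x_1,\dots,x_\len\}}\bigr)=\max\bigl(0,\;h^0(\OO_X(M))-\len\bigr)=0,
\]
the last equality because $\len\ge h^0(\OO_X(M))$. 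Openness holds because vanishing of $h^0$ is an open condition by semicontinuity, applied to the family of ideal sheaves over $U_\len$. This is precisely the content of Lemma~\ref{lem:LQ24}, so that lemma can also simply be cited.

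Next, for $j=0,\dots,\len$ let $p_j\colon U_{\len+1}\to U_\len$ be the projection forgetting the $j$-th point. Each $p_j$ is a surjective morphism, so $p_j^{-1}(W_\len)$ is a nonempty open subset of the irreducible variety $U_{\len+1}$. The intersection $\bigcap_j p_j^{-1}(W_\len)$ is therefore a nonempty, dense open subset, and it is invariant under permutations of the points, so it descends to a dense open set of reduced $Z\in\Hilb^{\len+1}(X)$. For every $Z$ in this set, each colength-one $Z'=Z_j$ satisfies $h^0(\OO_X(M)\otimes\II_{Z'})=0$, which is \eqref{eq:strongCB}. The Cayley--Bacharach property CB$(|M|)$ then follows trivially, since for such $Z$ both sides of the equality in Definition~\ref{def:CB} are zero.

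I do not expect a real obstacle. The only points needing care are the openness in families, which follows from semicontinuity, and the remark that the hypothesis $\len\ge p_g$ is not used for \eqref{eq:strongCB} itself. In \cite{LQ94} that hypothesis is there for the downstream application, namely to control $h^0(\OO_X(K_X)\otimes\II_Z)$, and I would note explicitly that it is harmless here.
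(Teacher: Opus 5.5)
Your proof is correct and complete. The paper gives no argument of its own beyond the citation of \cite[Lem.~2.5]{LQ94}, and your route is the standard reduction to Lemma~\ref{lem:LQ24} implicit in the paper's remark that the property is available at the threshold $\len(Z)\ge h^0(\OO_X(M))+1$: you apply that lemma to each of the $\len+1$ colength-one subsets $Z\smallsetminus\{x_j\}$ and intersect the finitely many dense open sets $p_j^{-1}(W_\len)$ in the irreducible $U_{\len+1}$. You are also right that the hypothesis $\len\ge p_g$ plays no role in \eqref{eq:strongCB} itself; in Step~1 of the proof of Theorem~\ref{thm:main} it is only checked, never used.
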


Thus for generic $0$-cycles the Cayley--Bacharach property is available exactly at the threshold $\len(Z)\ge h^0(\OO_X(M))+1$ (when $p_g=0$). The third lemma of \cite{LQ94}, which yields stability, is not needed here: for Ulrich bundles stability comes for free from semistability (Lemma~\ref{lem:semistable}).

\subsection{The points-on-a-curve mechanism}\label{subsec:Anghel}
The key idea of \cite{An94} is to abandon genericity and place the $0$-cycle on a single irreducible curve; the required vanishings then follow from B\'ezout. We formalize the mechanism as follows.

\begin{lemma}\label{lem:bezout}
Let $|M|$ be a linear system on $X$ and let $C\subset X$ be a reduced irreducible curve with
\[
h^0\bigl(\OO_X(M-C)\bigr)=0 .
\]
Let $Z$ be a set of $\len$ distinct points on $C$ with $\len\ge M\cdot C+2$. Then
\[
h^0\bigl(\OO_X(M)\otimes\II_{Z\smallsetminus\{x\}}\bigr)=0\quad\text{for every }x\in Z;
\]
in particular $h^0(\OO_X(M)\otimes\II_Z)=0$ and $Z$ satisfies \textup{CB}$(|M|)$.
\end{lemma}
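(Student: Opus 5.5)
The plan is to argue by contradiction via B\'ezout. Fix $x\in Z$ and suppose a section $s\in H^0(\OO_X(M)\otimes\II_{Z\smallsetminus\{x\}})$ is non-zero. Write $D=\operatorname{div}(s)\in|M|$, an effective curve. The first step uses the hypothesis $h^0(\OO_X(M-C))=0$: it says that $C$ cannot be a component of $D$. Indeed, otherwise $D-C$ would be effective, giving a non-zero section of $\OO_X(M-C)$. Hence $D$ and the irreducible curve $C$ have no common component, and $D\cap C$ is a finite set.

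The second step is the B\'ezout count. Since $C$ is not contained in $D$, the intersection number $D\cdot C=M\cdot C$ equals the sum of the local intersection multiplicities $i_p(D,C)$ over the points $p\in D\cap C$, each of which is at least $1$. The divisor $D$ passes through the $\len-1$ distinct points of $Z\smallsetminus\{x\}$, all lying on $C$, so
\[
M\cdot C\;\ge\;\#(D\cap C)\;\ge\;\len-1\;\ge\;M\cdot C+1,
\]
which is a contradiction. Therefore $h^0(\OO_X(M)\otimes\II_{Z\smallsetminus\{x\}})=0$ for every $x\in Z$.

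The remaining assertions are formal. Since $\II_Z\subseteq\II_{Z\smallsetminus\{x\}}$, the vanishing gives $h^0(\OO_X(M)\otimes\II_Z)=0$, which needs $Z\neq\emptyset$; this holds because $\len\ge M\cdot C+2\ge 2$, the inequality $M\cdot C\ge0$ being harmless, and if it fails the argument above still applies verbatim. For reduced $Z$ the colength-one subschemes are exactly the $Z\smallsetminus\{x\}$, so both sides in Definition~\ref{def:CB} vanish and CB$(|M|)$ holds. This is precisely condition \eqref{eq:strongCB}.

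The only delicate point is the first step, namely that $D$ and $C$ share no component. It needs $C$ irreducible and reduced, together with the vanishing of $h^0(\OO_X(M-C))$. Once that is in place, the intersection multiplicities are positive and the count is immediate. The counting step itself is where the bound $\len\ge M\cdot C+2$ (rather than $+1$) is used, since one point is removed.
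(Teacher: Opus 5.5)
Your proof is correct and is essentially the paper's argument: the paper also takes $D\in|M|$ through $Z\smallsetminus\{x\}$, uses B\'ezout when $C\not\subset D$ to get $M\cdot C\ge\len-1\ge M\cdot C+1$, and uses $h^0(\OO_X(M-C))=0$ to rule out $C\subset D$. The only difference is that it treats the two cases in the opposite order.
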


\begin{proof}
Suppose $D\in|M|$ contains $Z\smallsetminus\{x\}$, a set of $\len-1\ge M\cdot C+1$ distinct points of $C$. If $C\not\subset D$, then $D\cap C$ is finite and
\[
M\cdot C=D\cdot C\;\ge\;\#(D\cap C)\;\ge\;\len-1\;\ge\;M\cdot C+1,
\]
a contradiction. Hence $C\subset D$ and $D-C$ is an effective divisor in $|M-C|$, contradicting $h^0(\OO_X(M-C))=0$. Thus no such $D$ exists, which is statement \eqref{eq:strongCB}; the Cayley--Bacharach property follows trivially, both sides of the defining equality being zero.
\end{proof}

\begin{remark}\label{rem:anghel-role}
This is the argument of \cite[Th\'eor\`eme~2.2]{An94}, with $C=H_1\in|n_LH|$ and $|M|$ the adjoint system. Since it imposes no lower bound coming from $h^0(|M|)$, one might hope to run the Ulrich construction \eqref{eq:shape} below the generic threshold of Lemma~\ref{lem:LQ25}, i.e.,\ for \emph{special} polarizations; Theorem~\ref{thm:nogo} shows that this is impossible for the adjoint kernel, the hypotheses of Lemma~\ref{lem:bezout} fed with the Ulrich numerology secretly implying $h^1(\OO_X(H))=0$ (Corollary~\ref{cor:genus}). The lemma regains its force for the twisted kernels of \S\ref{subsec:twisted}.
\end{remark}

\section{The Cayley--Bacharach construction and the no-go theorem}\label{sec:ulrich}

\subsection{A numerical reduction}
We first record how the Ulrich condition interacts with extensions of the form \eqref{eq:ext}.

\begin{lemma}\label{lem:consistency}
Let $\EE$ be a rank $r$ bundle given by an extension
$0\to\OO_X(A)\to\EE\to\bigoplus_{i=1}^{r-1}\OO_X(B_i)\otimes\II_{Z_i}\to 0$. Then
\[
\chi\bigl(\EE(-H)\bigr)-\chi\bigl(\EE(-2H)\bigr)
= c_1(\EE)\cdot H-\frac{r}{2}\bigl(3H^2+H\cdot K_X\bigr).
\]
In particular the two conditions $\chi(\EE(-H))=\chi(\EE(-2H))=0$ are compatible exactly when $c_1(\EE)$ satisfies the Ulrich degree condition in \eqref{eq:chern}, and then they impose a single condition on $\sum_i\len(Z_i)$.
\end{lemma}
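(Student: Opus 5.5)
The plan is to compute both Euler characteristics by Riemann--Roch for the twists $\EE(-H)$ and $\EE(-2H)$, using only $c_1(\EE)$ and $c_2(\EE)$; the extension structure enters only to identify these Chern classes. For a rank $r$ bundle $\FF$ on a surface, Riemann--Roch gives
\[
\chi(\FF)=r\chi(\OO_X)+\tfrac12\bigl(c_1(\FF)^2-c_1(\FF)\cdot K_X\bigr)-c_2(\FF).
\]
We apply it with $\FF=\EE(-mH)$ for $m=1,2$. The twisting formulas are
\[
c_1(\EE(-mH))=c_1-rmH,
\]
\[
c_2(\EE(-mH))=c_2-(r-1)m\,c_1\cdot H+\tbinom r2 m^2H^2,
\]
where $c_i=c_i(\EE)$.

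Substituting these, $\chi(\EE(-mH))$ becomes a quadratic polynomial in $m$:
\[
\chi(\EE(-mH))=\chi(\EE)-m\bigl(c_1\cdot H\bigr)+\tfrac{rm}{2}H\cdot K_X+\tfrac{rm^2}{2}H^2,
\]
after the cross terms cancel. The coefficient of $m$ collects $-r\,c_1\cdot H$ from the square, $+(r-1)c_1\cdot H$ from $c_2$, and $\tfrac r2 H\cdot K_X$. The coefficient of $m^2$ is $\tfrac{r^2}{2}H^2-\tbinom r2H^2=\tfrac r2H^2$. Subtracting the value at $m=2$ from the value at $m=1$ then gives
\[
c_1\cdot H-\tfrac r2H\cdot K_X-\tfrac{3r}{2}H^2,
\]
which is the claimed identity.

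For the ``in particular'', note that the extension fixes
\[
c_1=A+\sum_i B_i.
\]
Expanding the Chern character of the quotient shows that $c_2$ depends on the $Z_i$ only through $\sum_i\len(Z_i)$, entering additively with coefficient $+1$; here one uses $c_2(\OO(B)\otimes\II_Z)=\len Z$ and the Whitney formula. Hence $\chi(\EE(-H))=\chi(\EE(-2H))=0$ forces the difference to vanish, which is exactly the degree condition in \eqref{eq:chern}. Conversely, once that condition holds, the two equations coincide and amount to the single equation $\chi(\EE(-H))=0$. That equation is affine in $\sum\len(Z_i)$ with nonzero coefficient, so it determines this sum.

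The main obstacle is purely bookkeeping: getting the $c_2$ twist formula right in rank $r$ and checking that the $c_1\cdot H$ terms combine to the coefficient $-1$. I would verify the result against the rank-two special case, where it must reproduce \eqref{eq:c2special}.
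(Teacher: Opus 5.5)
Your proof is correct, but it takes a different route from the paper's. The paper never touches $c_2$. It uses additivity of $\chi$ along the extension,
$\chi(\EE(-mH))=\chi(\OO_X(A-mH))+\sum_i\bigl(\chi(\OO_X(B_i-mH))-\len(Z_i)\bigr)$,
and applies Riemann--Roch to line bundles only. Each of the $r$ line bundles $M\in\{A,B_1,\dots,B_{r-1}\}$ contributes $M\cdot H-\tfrac12(3H^2+H\cdot K_X)$ to the difference, and the lengths cancel; summing gives the identity because $A+\sum_iB_i=c_1(\EE)$.

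You instead apply Riemann--Roch to $\EE(-mH)$ as a rank $r$ bundle, using the twisting formula for $c_2$. You find that $\chi(\EE(-mH))=\chi(\EE)-m\,c_1\cdot H+\tfrac{rm}{2}H\cdot K_X+\tfrac{rm^2}{2}H^2$, and your bookkeeping checks out.

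What your route buys is that the first identity visibly holds for every rank $r$ bundle, indeed every rank $r$ coherent sheaf, with no extension structure at all. This is the real reason the difference of the two Riemann--Roch constraints in \eqref{eq:chern} involves only $c_1(\EE)\cdot H$. The extension enters only through $c_2(\EE)=A\cdot\sum_iB_i+\sum_{i<j}B_i\cdot B_j+\sum_i\len(Z_i)$, obtained from Whitney's formula with $c(\OO_X(B)\otimes\II_Z)=1+B+\len(Z)$, which you need for the ``single condition'' clause.

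The paper's route is shorter and more elementary. It needs neither the rank $r$ twisting formula for $c_2$ nor Chern classes of ideal sheaves, only $\chi(\OO_X(B)\otimes\II_Z)=\chi(\OO_X(B))-\len(Z)$. The same additivity shows directly that $\chi(\EE(-H))$ is affine in $\sum_i\len(Z_i)$ with coefficient $-1$.
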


\begin{proof}
For any divisor $M$, Riemann--Roch gives
$\chi(\OO_X(M-H))-\chi(\OO_X(M-2H))=M\cdot H-\tfrac12(3H^2+H\cdot K_X)$,
and the lengths $\len(Z_i)$ cancel in the difference. Summing over the sub- and quotient line bundles yields the claim.
\end{proof}

\begin{remark}\label{rem:rank2forced}
If one insists on the ``uniform'' choice $A=H+K_X$, $B_i=2H$ (the one that makes all $h^0$-vanishings come for free when $p_g=q=0$, see below), the degree condition of Lemma~\ref{lem:consistency} reads $(r-2)\,H\cdot(H-K_X)=0$. Away from the degenerate case $H^2=H\cdot K_X$ this \emph{forces $r=2$}: the line-bundle-kernel Cayley--Bacharach construction naturally produces rank two special Ulrich bundles, and higher rank must be reached either by mixing the twists $B_i$ or by iterated extensions of rank two Ulrich bundles (\S\ref{sec:evenrank}).
\end{remark}

The rank two special case reduces to a single section-vanishing:

\begin{proposition}[cf.\ {\cite[Prop.~2.1(4) and Cor.~2.4]{Cas17}}]\label{prop:reduction}
Let $X$ be a surface and let $H$ be a divisor with $h^0(\OO_X(H))\neq0$ --- for instance, $H$ very ample. Let $\EE$ be a rank two vector bundle with $c_1(\EE)=3H+K_X$ and $c_2(\EE)=\tfrac12(5H^2+3H\cdot K_X)+2\chi(\OO_X)$. Then $\EE$ is a (special) Ulrich bundle if and only if
\[
h^0\bigl(\EE(-H)\bigr)=0 .
\]
For $H$ very ample this is \cite[Cor.~2.4]{Cas17}, where the criterion is stated as: a rank two bundle with the special Chern classes is Ulrich if and only if it is initialized.
\end{proposition}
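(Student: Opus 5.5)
The plan is to use Serre duality to turn the six vanishings into three, and then get those three from the single hypothesis $h^0(\EE(-H))=0$, a nonzero section of $\OO_X(H)$, and a Riemann--Roch count. The implication ``Ulrich $\Rightarrow h^0(\EE(-H))=0$'' is one of the vanishings \eqref{eq:ulrich-vanishing}, so only the converse needs proof.

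\emph{Step 1 (duality).} For a rank two bundle, $\EE^\vee\cong\EE\otimes\det\EE^{-1}=\EE(-3H-K_X)$. Hence
\[
\EE(-2H)^\vee\otimes\OO_X(K_X)\cong\EE(-3H-K_X)(2H+K_X)=\EE(-H),
\]
and Serre duality gives
\[
h^j\bigl(\EE(-2H)\bigr)=h^{2-j}\bigl(\EE(-H)\bigr),\qquad j=0,1,2 .
\]
It therefore suffices to prove $h^j(\EE(-H))=0$ for $j=0,1,2$.

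\emph{Step 2 (the $h^2$ term).} By Step 1, $h^2(\EE(-H))=h^0(\EE(-2H))$. Since $h^0(\OO_X(H))\neq0$, we may choose a nonzero section $s$ of $\OO_X(H)$. Multiplication by $s$ gives an injective sheaf map $\EE(-2H)\to\EE(-H)$, because $\EE$ is torsion free. Hence
\[
h^0\bigl(\EE(-2H)\bigr)\le h^0\bigl(\EE(-H)\bigr)=0 .
\]
This is the only place the hypothesis on $H$ is used.

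\emph{Step 3 (Riemann--Roch).} The twist $\EE(-H)$ has Chern classes
\[
c_1'=H+K_X,\qquad c_2'=c_2-c_1\cdot H+H^2=c_2-2H^2-H\cdot K_X .
\]
Riemann--Roch for a rank two bundle gives
\[
\chi\bigl(\EE(-H)\bigr)=2\chi(\OO_X)+\tfrac12\bigl(c_1'^2-c_1'\cdot K_X\bigr)-c_2' .
\]
Here $c_1'^2-c_1'\cdot K_X=(H+K_X)\cdot H=H^2+H\cdot K_X$, so
\[
\chi\bigl(\EE(-H)\bigr)=2\chi(\OO_X)+\tfrac12\bigl(5H^2+3H\cdot K_X\bigr)-c_2=0,
\]
by the prescribed value \eqref{eq:c2special}. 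Since $h^0(\EE(-H))=0$ by hypothesis and $h^2(\EE(-H))=0$ by Step 2, this forces $h^1(\EE(-H))=0$. Step 1 then transfers all three vanishings to $\EE(-2H)$, which completes the converse.

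\emph{Expected difficulty.} No step is a serious obstacle. The only thing to check carefully is the Riemann--Roch arithmetic in Step 3, which must reproduce exactly \eqref{eq:c2special}. The computation above shows that it does, and this is consistent with Lemma~\ref{lem:consistency}.
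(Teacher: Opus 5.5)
Your proposal is correct and follows essentially the same route as the paper: the embedding $\EE(-2H)\hookrightarrow\EE(-H)$ from a section of $\OO_X(H)$, Serre duality via $\EE^\vee\cong\EE(-3H-K_X)$, and Riemann--Roch giving $\chi(\EE(-H))=0$. The only cosmetic difference is that you first use duality to reduce all six vanishings to the three for $\EE(-H)$, whereas the paper computes the two $h^2$'s and the two $\chi$'s separately. Your Chern class and Riemann--Roch arithmetic checks out.
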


\begin{proof}
Necessity is clear. Conversely, multiplication by a non-zero section of $\OO_X(H)$ embeds $\EE(-2H)$ into $\EE(-H)$, so that $h^0(\EE(-2H))\le h^0(\EE(-H))=0$: the second vanishing is free. Since $\rk\EE=2$ we have $\EE^\vee\cong\EE(-c_1)=\EE(-3H-K_X)$, so Serre duality gives
\begin{gather*}
h^2\bigl(\EE(-H)\bigr)=h^0\bigl(\EE^\vee(H+K_X)\bigr)=h^0\bigl(\EE(-2H)\bigr)=0,\\
h^2\bigl(\EE(-2H)\bigr)=h^0\bigl(\EE^\vee(2H+K_X)\bigr)=h^0\bigl(\EE(-H)\bigr)=0 .
\end{gather*}
By Riemann--Roch, $\chi(\EE(-H))=2\chi(\OO_X)+\tfrac12(5H^2+3H\cdot K_X)-c_2(\EE)=0$, and $\chi(\EE(-2H))=\chi(\EE(-H))=0$ by the same duality. Hence $h^1(\EE(-H))=h^1(\EE(-2H))=0$ as well, and $\EE$ is Ulrich.
\end{proof}

\subsection{Existence: proof of Theorem \ref{thm:main}}\label{subsec:existence}
Assume $p_g=q=0$, so $\chi(\OO_X)=1$. Set
\[
\len\;:=\;\tfrac12\,H\cdot(H-K_X)+2\;=\;\chi\bigl(\OO_X(H)\bigr)+1 .
\]
(The second equality is Riemann--Roch with $\chi(\OO_X)=1$.) We construct $\EE$ as an extension of the adjoint-kernel shape \eqref{eq:shape} with $\len(Z)=\len$. Note that $c_1(\EE)=3H+K_X$ and
$c_2(\EE)=(H+K_X)\cdot 2H+\len=\tfrac12(5H^2+3H\cdot K_X)+2$,
as required by \eqref{eq:c2special}. The relevant Cayley--Bacharach system in Proposition~\ref{prop:CB} is
\[
\bigl|\,2H-(H+K_X)+K_X\,\bigr|=|H| .
\]

\begin{proof}[Proof of Theorem \textup{\ref{thm:main}}]
\emph{Step 1: the cycle $Z$.} Since $h^1(\OO_X(H))=h^2(\OO_X(H))=0$ (Conventions), $h^0(\OO_X(H))=\chi(\OO_X(H))=\len-1$. Since $p_g=0$, we may apply Lemma~\ref{lem:LQ25} with $|M|=|H|$ and with its length parameter set to $\len-1$ (admissible, as $\len-1=h^0(\OO_X(H))\ge p_g$) shows that a generic reduced $Z\in\Hilb^{\len}(X)$ satisfies \eqref{eq:strongCB} for $|H|$; in particular $h^0(\OO_X(H)\otimes\II_Z)=0$ and $Z$ satisfies CB$(|H|)$.

\smallskip
\noindent \emph{Step 2: the bundle.} Since $Z$ is reduced, hence locally complete intersection, Proposition~\ref{prop:CB} provides a locally free extension \eqref{eq:shape}. (For the record, the space of such extensions is
$\Ext^1(\OO_X(2H)\otimes\II_Z,\OO_X(H+K_X))\cong H^1(\OO_X(H)\otimes\II_Z)^\vee$,
by Serre duality for Ext groups; from $0\to\OO_X(H)\otimes\II_Z\to\OO_X(H)\to\OO_Z\to 0$, Step 1 and $\len=\chi(\OO_X(H))+1$ one computes $h^1(\OO_X(H)\otimes\II_Z)=\len-\chi(\OO_X(H))=1$, so the bundle $\EE$ is in fact \emph{uniquely determined} by $Z$; cf.\ \cite[Rem.~3.4]{Cas17}.)

\smallskip
\noindent \emph{Step 3: Ulrich vanishings.} Twisting \eqref{eq:shape} by $-H$ and $-2H$ gives
\begin{gather*}
0\to\OO_X(K_X)\to\EE(-H)\to\OO_X(H)\otimes\II_Z\to 0,\\
0\to\OO_X(K_X-H)\to\EE(-2H)\to\II_Z\to 0 .
\end{gather*}
Since $p_g=0$ and $h^0(\OO_X(H)\otimes\II_Z)=0$ (Step 1), we get $h^0(\EE(-H))=0$. Moreover $h^0(\OO_X(K_X-H))\le h^0(\OO_X(K_X))=0$ and $h^0(\II_Z)=0$ because $Z\neq\emptyset$ ($\len\ge 2$), whence $h^0(\EE(-2H))=0$. By Proposition~\ref{prop:reduction}, $\EE$ is a special Ulrich bundle.

\smallskip
\noindent \emph{Step 4: (semi)stability.} This is Lemma~\ref{lem:semistable} and Corollary~\ref{cor:stable} below.
\end{proof}

\begin{lemma}[{\cite[Thm.~2.9]{CHGS12}}; cf.\ {\cite[Thm.~4.1(3)]{Cas17}}]\label{lem:semistable}
Every Ulrich bundle is $\mu_H$-semistable; and if $0\to\mathcal L\to\FF\to\mathcal M\to0$ is exact with $\FF$ Ulrich, $\mathcal M$ torsion free and $\mu_H(\mathcal L)=\mu_H(\FF)$, then $\mathcal L$ and $\mathcal M$ are Ulrich as well. Consequently, a strictly $\mu_H$-semistable rank two Ulrich bundle is an extension of Ulrich line bundles: a destabilizing subsheaf may be replaced by its saturation, whose quotient is then torsion free.
\end{lemma}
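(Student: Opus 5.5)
The statement has two halves, semistability of Ulrich bundles and the behaviour of Ulrich bundles under destabilizing sequences. I will prove them in that order and then read off the rank two consequence.

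\emph{Semistability.} The plan is to use the characterization via a finite linear projection $\pi\colon X\to\PP^n$: $\pi_*\FF\cong\OO_{\PP^n}^{\oplus d\rk\FF}$. By Riemann--Roch on $X$, the Hilbert polynomial of an Ulrich bundle is
\[
\chi(\FF(tH)) = d\,\rk\FF\binom{t+n}{n},
\]
and this forces $\mu_H(\FF)$ to equal the Ulrich slope $\mu_0$ read off from \eqref{eq:chern}.

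Now let $\mathcal L\subset\FF$ be saturated with torsion free quotient $\mathcal M$. We have $H^0(\mathcal L(-H))\subseteq H^0(\FF(-H))=0$, and the same holds for every further negative twist. Since $\pi_*\mathcal L$ is a subsheaf of the trivial bundle $\pi_*\FF$ and $\pi$ is finite, $\mu(\pi_*\mathcal L)\le 0$. The degree of $\pi_*\mathcal L$ is an affine function of $\mu_H(\mathcal L)$, namely $\rk\mathcal L\,(\mu_H(\mathcal L)-\mu_0)$ up to a positive factor, by Grothendieck--Riemann--Roch or a direct comparison of leading Hilbert coefficients. Hence $\mu_H(\mathcal L)\le\mu_0=\mu_H(\FF)$, which is semistability.

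\emph{Equal slope.} Suppose now that $\mu_H(\mathcal L)=\mu_H(\FF)$. Then $\pi_*\mathcal L$ is a degree-zero subsheaf of a trivial bundle on $\PP^n$ with torsion free quotient $\pi_*\mathcal M$. A degree-zero subsheaf of $\OO^{\oplus N}$ with torsion free quotient is itself trivial, $\OO^{\oplus k}$. I would argue this through semistability of $\OO^N$ and its Jordan--Hölder filtration, using reflexive hulls and the fact that on $\PP^n$ a polystable degree-zero reflexive subsheaf of a trivial bundle is trivial. Then $\pi_*\mathcal M\cong\OO^{N-k}$ as well, since a quotient of trivial bundles by a trivial subbundle is trivial. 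Because $\mathcal L$ and $\mathcal M$ are Cohen--Macaulay, being pushforward-trivial, they are locally free on the surface in the cases of interest, and both are Ulrich by the projection characterization.

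A cohomological alternative avoids pushforwards. The vanishings $h^0(\mathcal L(-iH))=0$ come from the inclusion into $\FF$. Dually, $h^2(\mathcal M(-iH))=0$ comes from the surjection $\FF\to\mathcal M$ on a surface. Equality of slopes gives $\chi(\mathcal L(-H))=\chi(\mathcal L(-2H))=0$ up to a nonnegative $c_2$-defect, in the spirit of Lemma~\ref{lem:consistency}, and balancing the $\chi$'s of $\mathcal L$ and $\mathcal M$ against $\chi(\FF(-iH))=0$ kills the middle cohomology.

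\emph{Rank two consequence.} Let $\FF$ be rank two and strictly semistable, with destabilizing subsheaf $\mathcal L'$. Replace $\mathcal L'$ by its saturation $\mathcal L$; this has slope at least that of $\mathcal L'$, hence exactly $\mu_H(\FF)$ by semistability, and its quotient $\mathcal M$ is torsion free. On a smooth surface the saturated rank one subsheaf of a vector bundle is reflexive, hence a line bundle. The quotient is $\mathcal M=\mathcal M'\otimes\II_W$ with $\mathcal M'$ a line bundle, and Ulrichness of $\mathcal M$ forces $W=\emptyset$: for instance, $\chi$ drops by $\len(W)$, contradicting the Ulrich numerology. So $\FF$ is an extension of Ulrich line bundles.

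The main obstacle is the equal-slope step: showing that the subsheaf $\mathcal L$ actually satisfies all six vanishings, rather than only the $h^0$ and $h^2$ ones. This is where the triviality of degree-zero saturated subsheaves of $\OO^N$, or the careful $\chi$ bookkeeping, must be done properly; since the statement is quoted from \cite{CHGS12}, I would ultimately follow their argument there.
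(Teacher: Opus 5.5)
The paper gives no proof of this lemma; it is quoted from \cite[Thm.~2.9]{CHGS12}. Your main line is the standard argument behind that citation: push forward along a finite linear projection $\pi\colon X\to\PP^n$, observe that $\deg\pi_*\mathcal L$ is a positive multiple of $\rk\mathcal L\,(\mu_H(\mathcal L)-\mu_H(\FF))$, and use that $\pi_*\FF$ is trivial. The semistability half is complete as you wrote it.

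The equal-slope half is where the proof actually lives. Your route through Jordan--H\"older filtrations and polystable reflexive subsheaves is vaguer than it needs to be. Work with the quotient instead.
\begin{itemize}
\item $\mathcal Q:=\pi_*\mathcal M$ is torsion free (since $\pi$ is finite and surjective), has degree $0$, and is globally generated, being a quotient of $\OO_{\PP^n}^{\oplus N}$.
\item With $\rho=\rk\mathcal Q$, general sections give an injection $\OO^{\oplus\rho}\hookrightarrow\mathcal Q$. Its cokernel is torsion with $c_1=0$, hence supported in codimension $\ge2$.
\item So $\OO^{\oplus\rho}\to\mathcal Q^{\vee\vee}$ is a map of reflexive sheaves that is an isomorphism in codimension one, hence an isomorphism. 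Sandwiching gives $\mathcal Q=\OO^{\oplus\rho}$.
\item The surjection $\OO^{\oplus N}\to\OO^{\oplus\rho}$ is then a constant matrix of rank $\rho$, so its kernel $\pi_*\mathcal L$ is trivial as well.
\end{itemize}
Triviality of the pushforwards gives the Ulrich vanishings. It also gives local freeness of $\mathcal L$ and $\mathcal M$ on any smooth $X$, since maximal Cohen--Macaulay modules over a regular local ring are free. This holds in general, not only ``in the cases of interest''.

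Two other pieces of your proposal do not work and should be dropped.

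\emph{The cohomological alternative.} Since $H^\bullet(\FF(-iH))=0$ for $i=1,2$, the long exact sequence gives $h^0(\mathcal M(-iH))=h^1(\mathcal L(-iH))$ and $h^1(\mathcal M(-iH))=h^2(\mathcal L(-iH))$. So $\chi(\mathcal L(-iH))+\chi(\mathcal M(-iH))=0$ holds identically, and ``balancing'' against $\chi(\FF(-iH))=0$ carries no information. Equality of slopes only yields $\chi(\mathcal L(-H))=\chi(\mathcal L(-2H))$, as in Lemma~\ref{lem:consistency}. It does not make either one vanish, because the common value depends on $c_2(\mathcal L)$, which no slope condition controls.

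\emph{The rank two step.} The claim that ``$\chi$ drops by $\len(W)$'' is not a contradiction by itself. If $\mathcal M=\mathcal M'\otimes\II_W$ had $H^\bullet(\mathcal M(-iH))=0$, this would only mean $h^0(\mathcal M'(-iH))=\len(W)$ and $h^1=h^2=0$ for $i=1,2$, which is numerically consistent. You do not need this step anyway: the pushforward argument already shows that $\mathcal M$ is locally free, so $W=\emptyset$.
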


\begin{corollary}\label{cor:stable}
The bundle $\EE$ of Theorem~\ref{thm:main} is $\mu_H$-stable unless $X$ carries an Ulrich line bundle $M$; any such $M$ satisfies $M\cdot H=\tfrac12(3H^2+H\cdot K_X)$. In particular, if $\tfrac12(3H^2+H\cdot K_X)$ is not the $H$-degree of any line bundle on $X$, then $\EE$ is $\mu_H$-stable.
\end{corollary}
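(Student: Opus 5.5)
The plan is to combine the semistability statement of Lemma~\ref{lem:semistable} with a slope computation. By Theorem~\ref{thm:main} and Proposition~\ref{prop:reduction}, $\EE$ is a rank two Ulrich bundle, hence $\mu_H$-semistable by Lemma~\ref{lem:semistable}. So it suffices to show that if $\EE$ is strictly semistable, then $X$ carries an Ulrich line bundle $M$ with the stated degree.

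Suppose $\EE$ is not $\mu_H$-stable. Then there is a rank one subsheaf $\mathcal L\subset\EE$ with $\mu_H(\mathcal L)\ge\mu_H(\EE)$, and semistability forces equality. Replacing $\mathcal L$ by its saturation keeps equality of slopes: the saturation can only increase the slope, and semistability forbids a strict increase. The quotient $\mathcal M=\EE/\mathcal L$ is then torsion free of rank one. Since $X$ is a smooth surface, a saturated rank one subsheaf of a locally free sheaf is reflexive, hence a line bundle $\mathcal L=\OO_X(M)$. Lemma~\ref{lem:semistable} now says that $\mathcal L$ is Ulrich, so $M$ is an Ulrich line bundle on $X$.

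For the degree, apply \eqref{eq:chern} with $\rk=1$: any Ulrich line bundle satisfies $M\cdot H=\tfrac12(3H^2+H\cdot K_X)$. The same value also follows from $\mu_H(\mathcal L)=\mu_H(\EE)=\tfrac12(3H+K_X)\cdot H$. The final sentence of the corollary is the contrapositive: if no line bundle has $H$-degree $\tfrac12(3H^2+H\cdot K_X)$, then no destabilizing $M$ exists and $\EE$ is $\mu_H$-stable. When $3H^2+H\cdot K_X$ is odd this is automatic. Note also that $H^2\equiv H\cdot K_X \pmod 2$, so this quantity is always an integer.

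The only point needing care is that the saturated destabilizing subsheaf is a line bundle with torsion-free quotient, so that Lemma~\ref{lem:semistable} applies. This is standard reflexivity on a smooth surface, so I expect no real obstacle.
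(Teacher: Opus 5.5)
Your proposal is correct and follows the paper's proof: semistability from Lemma~\ref{lem:semistable}, a saturated destabilizing rank one subsheaf which that lemma makes Ulrich, and the rank one case of \eqref{eq:chern} for its degree. Your reflexivity argument spells out the saturation step that the paper folds into the lemma. One quibble: $H^2+H\cdot K_X$ is even by adjunction, so $3H^2+H\cdot K_X$ is never odd, and your remark about the odd case is vacuous, as your own parity observation shows.
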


\begin{proof}
By Lemma~\ref{lem:semistable}, $\EE$ is $\mu_H$-semistable, and if it fails to be $\mu_H$-stable it is an extension of Ulrich line bundles. An Ulrich line bundle $M$ has rank one, so \eqref{eq:chern} gives $M\cdot H=\tfrac12(3H^2+H\cdot K_X)$. The last assertion is immediate.
\end{proof}

\subsection{The no-go result}\label{subsec:nogo}
We now prove Theorem~\ref{thm:nogo}. The point is that the two requirements of the construction collide for the adjoint kernel: local freeness, i.e.,\ the Cayley--Bacharach property, makes $Z$ impose \emph{dependent} conditions on $|H|$, while the Ulrich vanishing makes $Z$ impose \emph{independent} conditions --- and the two are compatible only when $h^1(\OO_X(H))=0$.

\begin{proof}[Proof of Theorem \textup{\ref{thm:nogo}}]
Suppose $\EE$ satisfies the hypotheses of the theorem, that is, $\EE$ has rank two, $c_1(\EE)=3H+K_X$, $c_2(\EE)$ is as in \eqref{eq:c2special}, $h^0(\EE(-H))=0$, and $\EE$ sits in \eqref{eq:shape}. Only $H$ ample with $h^0(\OO_X(H))\neq0$ is used; very ampleness plays no role. We derive $h^1(\OO_X(H))=0$ in four steps.

\emph{(1)} From $c_2(\EE)=(H+K_X)\cdot 2H+\len(Z)$ and \eqref{eq:c2special}:
\[
\len(Z)=\tfrac12H\cdot(H-K_X)+2\chi(\OO_X)=\chi\bigl(\OO_X(H)\bigr)+\chi(\OO_X) .
\]
Since $p_g=0$ we have $\chi(\OO_X)=1-q$, so $\len(Z)=\chi(\OO_X(H))+1-q\ge1$ by hypothesis, and in particular $Z\neq\emptyset$.

\emph{(2)} Twist \eqref{eq:shape} by $-H$ and take cohomology:
\[
0\to H^0(\OO_X(K_X))\to H^0(\EE(-H))\to H^0(\OO_X(H)\otimes\II_Z)\to H^1(\OO_X(K_X)).
\]
Since $p_g=0$ we have $H^0(\OO_X(K_X))=0$, while $H^1(\OO_X(K_X))\cong H^1(\OO_X)^\vee$ has dimension $q$. The Ulrich condition $h^0(\EE(-H))=0$ therefore makes the connecting map injective on $H^0(\OO_X(H)\otimes\II_Z)$, whence
\[
h^0\bigl(\OO_X(H)\otimes\II_Z\bigr)\;\le\;q .
\]
(For $q=0$ this is the vanishing $h^0(\OO_X(H)\otimes\II_Z)=0$.)

\emph{(3)} Since $\EE$ is locally free, the necessary direction of Proposition~\ref{prop:CB} --- which, as noted after its statement, holds for \emph{arbitrary} $0$-dimensional $Z$ --- says $Z$ satisfies CB$(|H|)$: for every colength-one $Z'\subseteq Z$,
\[
h^0\bigl(\OO_X(H)\otimes\II_{Z'}\bigr)=h^0\bigl(\OO_X(H)\otimes\II_{Z}\bigr)\;\le\;q .
\]

\emph{(4)} On the other hand, a subscheme of length $\len(Z)-1$ imposes at most $\len(Z)-1$ conditions on $|H|$. Using $h^0(\OO_X(H))=\chi(\OO_X(H))+h^1(\OO_X(H))$ (Conventions) together with $\len(Z)-1=\chi(\OO_X(H))-q$ from step (1), we get
\[
h^0\bigl(\OO_X(H)\otimes\II_{Z'}\bigr)\ge h^0\bigl(\OO_X(H)\bigr)-\bigl(\len(Z)-1\bigr)=h^1\bigl(\OO_X(H)\bigr)+q .
\]
Combining with (3) gives $h^1(\OO_X(H))+q\le q$, i.e.\ $h^1(\OO_X(H))\le0$, as claimed.
\end{proof}

\begin{remark}\label{rem:nogo-more}
The proof shows more. On a surface with $p_g=0$, and assuming as above that $\chi(\OO_X(H))\ge q$, \emph{every} special Ulrich bundle of rank two containing $\OO_X(H+K_X)$ as a subsheaf with torsion-free quotient certifies $h^1(\OO_X(H))=0$, whatever $q$ --- the budget deficit and the absorber $H^1(\OO_X(K_X))$ grow by the same $q$ and cancel. The non-speciality hypothesis in \cite{Bea16b,Cas17,Cas22} is thus forced by the shape of the construction, not by the method of choosing $Z$. Note where the obstruction lives: the connecting homomorphism in step (2) lands in $H^1(\OO_X(K_X))\cong H^1(\OO_X)^\vee$, of dimension exactly $q$ --- so the absorber is trivial precisely in the regular case, and it is the adjointness of the kernel that pins the target down to this space in the first place. This is the seed of \S\ref{subsec:twisted}. The hypothesis $\chi(\OO_X(H))\ge q$ is needed here for the same reason as in the theorem, namely so that step~(4) has a colength-one subscheme at its disposal: when $\len(Z)=0$ the argument still gives $h^0(\OO_X(H))\le q$, hence $h^1(\OO_X(H))\le1$, but no longer $h^1(\OO_X(H))=0$.
\end{remark}

\begin{remark}[When the auxiliary hypothesis fails]\label{rem:nogo-vacuous}
The role of $\chi(\OO_X(H))\ge q$ is only to guarantee that $Z$ be non-empty. If it fails, step~(1) gives $\len(Z)\le0$: when $\chi(\OO_X(H))<q-1$ no bundle of the shape \eqref{eq:shape} exists at all, the length being negative, and when $\chi(\OO_X(H))=q-1$ one has $Z=\emptyset$ while step~(2) still yields $h^0(\OO_X(H))\le q$. Suppose in addition that $H$ is very ample. Then $|H|$ embeds $X$ in $\PP^{h^0(\OO_X(H))-1}$ as a non-degenerate surface, and a surface spanning $\PP^2$ is $\PP^2$ itself; so $h^0(\OO_X(H))\ge4$ unless $(X,\OO_X(H))\cong(\PP^2,\OO(1))$, and that exception has $q=0$, where the hypothesis holds automatically. The construction is therefore excluded outright whenever $q\le3$. For $q=0$ no positivity beyond ampleness is needed, the bound $h^0(\OO_X(H))\le q=0$ contradicting the standing hypothesis $h^0(\OO_X(H))\neq0$ on its own; but for $1\le q\le3$ and $H$ merely ample no such conclusion is available, an ample divisor need not be base point free, let alone very ample, and $h^0(\OO_X(H))$ can be as small as $1$.
\end{remark}

\begin{proposition}[Numerical twists and non-saturated kernels]\label{prop:numkernel}
Let $X$ be a surface with $p_g=q=0$, let $H$ be very ample and let $\EE$ be a rank two bundle with $h^0(\EE(-H))=0$ --- in particular, any Ulrich bundle with respect to $H$. Then:
\begin{enumerate}[label=\textup{(\alph*)}]
\item $\EE$ contains no subsheaf $\OO_X(A)$ with $A\equiv H+K_X$ numerically and $A\neq H+K_X$ in $\Pic(X)$ --- no saturation or torsion-freeness of the quotient is required;
\item if $s\in H^0(\EE(-H-K_X))$ vanishes on a divisor $D>0$, then $h^1(\OO_D)=0$; in particular $p_a(D)\le0$ and every irreducible component of $D$ is a smooth rational curve.
\end{enumerate}
\end{proposition}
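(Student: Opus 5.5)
The plan is to argue by contradiction in both parts, as the text before the statement announces: in each case we produce a non-zero section of $\EE(-H)$ coming from a line subbundle. The only inputs are $p_g=q=0$ (so $\chi(\OO_X)=1$ and $H^1(\OO_X(K_X))=0$), Riemann--Roch, and Serre duality.

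\emph{Part (a).} Suppose $\OO_X(A)\subseteq\EE$ with $A\equiv H+K_X$ and $A\neq H+K_X$. Write $A=H+K_X+\tau$ with $\tau\equiv0$ and $\tau\not\sim0$. Twisting by $-H$ gives an inclusion $\OO_X(K_X+\tau)\hookrightarrow\EE(-H)$ with no hypothesis on the quotient, so $h^0(\OO_X(K_X+\tau))\le h^0(\EE(-H))=0$. We show this fails.
\begin{itemize}
\item Riemann--Roch gives $\chi(\OO_X(K_X+\tau))=\chi(\OO_X(K_X))=1$, since $\tau$ is numerically trivial.
\item Serre duality gives $h^2(\OO_X(K_X+\tau))=h^0(\OO_X(-\tau))$.
\item $h^0(\OO_X(-\tau))=0$: a non-zero effective divisor in $|-\tau|$ would have $H$-degree $0$, so it is the zero divisor, which would force $\tau\sim0$.
\end{itemize}
Hence $h^0(\OO_X(K_X+\tau))\ge\chi(\OO_X(K_X+\tau))=1$, a contradiction. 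The ingredients $p_g=q=0$ and $H$ ample are what is used here; very ampleness is not needed.

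\emph{Part (b).} Let $s\in H^0(\EE(-H-K_X))$ vanish on the divisor $D>0$. Then $s$ factors as $\OO_X(D)\hookrightarrow\EE(-H-K_X)$, so $\OO_X(K_X+D)\hookrightarrow\EE(-H)$ and therefore $h^0(\OO_X(K_X+D))=0$. Consider
\[
0\to\OO_X(K_X)\to\OO_X(K_X+D)\to\omega_D\to0 .
\]
Since $H^1(\OO_X(K_X))\cong H^1(\OO_X)^\vee=0$, we get $h^0(\omega_D)=h^0(\OO_X(K_X+D))=0$. Duality on the Gorenstein curve $D$ then gives $h^1(\OO_D)=0$. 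Consequently
\[
p_a(D)=1-\chi(\OO_D)=1-h^0(\OO_D)\le0 .
\]

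For an irreducible component $C$ of $D$, the surjection $\OO_D\to\OO_C$ induces a surjection $H^1(\OO_D)\to H^1(\OO_C)$, because $H^2$ vanishes on curves. So $h^1(\OO_C)=0$. As $C$ is integral, $h^0(\OO_C)=1$, hence $p_a(C)=0$, and an integral curve of arithmetic genus $0$ is a smooth rational curve.

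\emph{Main obstacle.} The only delicate point is in (a): we must not assume $\tau$ is torsion or that the inclusion is saturated. The argument above avoids both, since it uses only numerical triviality and the fact that an inclusion of sheaves induces an injection on global sections. In (b), the step most worth checking is the right-exactness on $H^1$ for curves.
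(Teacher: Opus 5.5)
Your proof is correct and follows the paper's argument essentially step for step. In (a) you produce a non-zero section of $\OO_X(K_X+\tau)\subseteq\EE(-H)$ from $\chi=1$ and $h^2=h^0(\OO_X(-\tau))=0$, and in (b) you use the sequence $0\to\OO_X(K_X)\to\OO_X(K_X+D)\to\omega_D\to0$ with $p_g=q=0$ and duality on $D$, exactly as the paper does. The one small difference is in justifying $h^0(\OO_X(-\tau))=0$: the paper first notes that $\tau$ is torsion because $q=0$, whereas you derive the vanishing directly from the ampleness of $H$, which is equally valid.
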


\begin{proof}
In both cases $\EE(-H)$ contains $\OO_X(A-H)$ as a subsheaf, so it suffices to produce a non-zero section of the latter.

(a) Here $A-H=K_X+\tau$ with $\tau$ numerically trivial and, since $q=0$ forces $\Pic^0(X)=0$, torsion; by hypothesis $\tau\neq0$. Then $h^2(\OO_X(K_X+\tau))=h^0(\OO_X(-\tau))=0$, while $\tau\cdot\tau=\tau\cdot K_X=0$ gives $\chi(\OO_X(K_X+\tau))=\chi(\OO_X)=1$, whence
\[
h^0\bigl(\OO_X(K_X+\tau)\bigr)=1+h^1\bigl(\OO_X(K_X+\tau)\bigr)\;\ge\;1 .
\]

(b) Here $A-H=K_X+D$. By adjunction $(K_X+D)|_D=\omega_D$, so the restriction sequence reads $0\to\OO_X(K_X)\to\OO_X(K_X+D)\to\omega_D\to0$; since $h^0(\OO_X(K_X))=p_g=0$ and $h^1(\OO_X(K_X))=h^1(\OO_X)^\vee=0$, it gives $h^0(\OO_X(K_X+D))=h^0(\omega_D)=h^1(\OO_D)$, the last equality by duality on the Gorenstein curve $D$. So $h^1(\OO_D)=0$. The consequences follow: $p_a(D)=1-\chi(\OO_D)=1-h^0(\OO_D)+h^1(\OO_D)\le0$ since $h^0(\OO_D)\ge1$; and for an irreducible component $C\subseteq D$, taken with its reduced structure, the surjection $\OO_D\twoheadrightarrow\OO_C$ has kernel supported in dimension $\le1$, so $H^1(\OO_D)\twoheadrightarrow H^1(\OO_C)$ and $p_a(C)=h^1(\OO_C)=0$: $C$ is a smooth rational curve.
\end{proof}

\begin{corollary}\label{cor:numerical-nogo}
Let $X$ have $p_g=q=0$ and let $H$ be very ample with $h^1(\OO_X(H))\neq0$. Then no rank two Ulrich bundle for $H$ arises from an extension
\[
0\longrightarrow\OO_X(A)\longrightarrow\EE\longrightarrow\OO_X(B)\otimes\II_Z\longrightarrow0,\qquad A\equiv H+K_X,\quad B\equiv 2H ,
\]
for any $0$-dimensional $Z$ and any such $A,B$ with $A+B=3H+K_X$ in $\Pic(X)$.
\end{corollary}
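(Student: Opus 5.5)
The plan is to split according to whether $A=H+K_X$ in $\Pic(X)$ or not. Since $A+B=3H+K_X$ in $\Pic(X)$, $A$ determines $B=3H+K_X-A$, and $B\equiv 2H$ is automatic once $A\equiv H+K_X$. So either $A=H+K_X$ and $B=2H$ on the nose, or $A=H+K_X+\tau$ and $B=2H-\tau$ with $\tau$ a non-zero numerically trivial class. Because $q=0$, $\Pic^0(X)=0$, so $\tau$ is torsion.

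\emph{Case $A=H+K_X$ in $\Pic(X)$.} The extension is then exactly of the adjoint-kernel shape \eqref{eq:shape}. Suppose $\EE$ were a rank two Ulrich bundle for $H$. Then $c_1(\EE)=A+B=3H+K_X$, and by Proposition~\ref{prop:reduction} (or directly \eqref{eq:c2special}) $c_2(\EE)$ takes the special value and $h^0(\EE(-H))=0$. The hypotheses of Theorem~\ref{thm:nogo} also hold. Indeed $p_g=0$; $H$ is very ample, hence ample with $h^0(\OO_X(H))\neq0$; $h^1(\OO_X(H))\neq0$ by assumption; and $\chi(\OO_X(H))\ge q=0$. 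For the last point, $h^0(\OO_X(H))\ge 1$ and $h^2(\OO_X(H))=0$ by the Conventions, so $\chi(\OO_X(H))=h^0-h^1$, and this alone is not obviously nonnegative. I would instead argue that $\chi(\OO_X(H))=\tfrac12H\cdot(H-K_X)+1$. Alternatively, observe that Remark~\ref{rem:nogo-vacuous} shows that for $q=0$ the construction is excluded outright when $\chi(\OO_X(H))<0$: the length $\len(Z)=\chi(\OO_X(H))+1$ would be negative, or $Z=\emptyset$ forces $h^0(\OO_X(H))\le 0$. Either way Theorem~\ref{thm:nogo} (together with that remark) gives a contradiction.

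\emph{Case $A\neq H+K_X$.} This is Proposition~\ref{prop:numkernel}(a). An Ulrich $\EE$ has $h^0(\EE(-H))=0$, but it would contain $\OO_X(A)$ with $A\equiv H+K_X$ and $A\neq H+K_X$, which is impossible. No saturation is needed, so the quotient $\OO_X(B)\otimes\II_Z$ plays no role here.

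The only delicate step is the bookkeeping of $\chi(\OO_X(H))\ge q$ in the first case. I would handle it through Remark~\ref{rem:nogo-vacuous}, noting that the $q=0$ branch there uses only $h^0(\OO_X(H))\neq0$. The rest is a direct combination of earlier results.
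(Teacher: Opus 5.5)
Your proof is correct and follows the paper's argument: you split into the case $A=H+K_X$, handled by Theorem~\ref{thm:nogo} with the auxiliary hypothesis $\chi(\OO_X(H))\ge0$ secured through Remark~\ref{rem:nogo-vacuous} (a negative length, or $Z=\emptyset$ forcing $h^0(\OO_X(H))\le0$), and the case $A\neq H+K_X$, handled by Proposition~\ref{prop:numkernel}(a). Two cosmetic points: the Riemann--Roch formula $\chi(\OO_X(H))=\tfrac12H\cdot(H-K_X)+1$ does not by itself give nonnegativity, so drop that alternative and rely only on the remark; and the special value of $c_2$ comes from \eqref{eq:chern}, not from Proposition~\ref{prop:reduction}.
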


\begin{proof}
If $A\neq H+K_X$ this is Proposition~\ref{prop:numkernel}(a), and holds with no hypothesis on $h^1(\OO_X(H))$. If $A=H+K_X$ then $B=2H$ and this is Theorem~\ref{thm:nogo}, whose auxiliary hypothesis $\chi(\OO_X(H))\ge q=0$ is satisfied here: by Remark~\ref{rem:nogo-vacuous}, $\chi(\OO_X(H))=-1$ would force $h^0(\OO_X(H))\le0$, contradicting very ampleness, and $\chi(\OO_X(H))<-1$ would force $\len(Z)<0$.
\end{proof}

\begin{remark}\label{rem:numerical-scope}
Two comments on the scope. First, if one weakens the definition of \emph{special} to $c_1(\EE)\equiv3H+K_X$ numerically, the remaining configuration is $A=H+K_X$, $B=2H+\beta$ with $\beta$ torsion; there the vanishing system and the Cayley--Bacharach system $|B-A+K_X|=|H+\beta|$ still coincide, and the proof of Theorem~\ref{thm:nogo} gives $h^1(\OO_X(H+\beta))=0$ --- non-speciality of the twisted class rather than of $H$ itself. One point needs an argument, since step~(4) uses $h^0(\OO_X(H+\beta))=\chi+h^1$, hence $h^2(\OO_X(H+\beta))=0$, and $H+\beta$ need not be effective: if $h^2(\OO_X(H+\beta))\neq0$ then $E:=K_X-H-\beta$ is effective, so $H\cdot K_X\ge H^2$; on the other hand $\EE$ is $\mu_H$-semistable (Lemma~\ref{lem:semistable}) and the subsheaf $\OO_X(H+K_X)$ gives $(H+K_X)\cdot H\le\tfrac12(3H^2+H\cdot K_X)$, i.e.\ $H\cdot K_X\le H^2$. Hence $H\cdot E=0$ with $H$ ample and $E$ effective, so $E=0$ and $H+\beta=K_X$, in which case $h^1(\OO_X(H+\beta))=h^1(\OO_X(K_X))=q=0$ anyway. Second, Proposition~\ref{prop:numkernel}(b) says that the adjoint kernel may fail to be saturated only along divisors all of whose irreducible components are rational curves; combined with Corollary~\ref{cor:genus}, which shows that rational curves can only certify polarizations that were non-special all along, this closes the gap between ``no bundle of the shape \eqref{eq:shape}'' and ``no bundle containing $\OO_X(H+K_X)$'' on any surface carrying no rational curves.
\end{remark}

\begin{remark}\label{rem:twisted-irregular}
For $q>0$ the picture is genuinely different, and Proposition~\ref{prop:numkernel}(a) has no analogue. There $\Pic^0(X)$ is positive-dimensional and, for $\eta\in\Pic^0(X)$ non-zero, $\chi(\OO_X(K_X+\eta))=1-q$, so that $h^0(\OO_X(K_X+\eta))=0$ is the expected behaviour and the absorber $H^1(\OO_X(K_X+\eta))$, of dimension $q-1$, is available. For the kernel $\OO_X(H+K_X+\eta)$ with quotient $\OO_X(2H+\eta)\otimes\II_Z$ the vanishing system is $|H+\eta|$ while the Cayley--Bacharach system is $|H|$: the two estimates of steps (2) and (3)--(4) below concern different linear systems and do not combine. Theorem~\ref{thm:nogo} says nothing about such kernels, which are precisely the shape used in the irregular case in \cite{Cas19}.
\end{remark}

\subsection{The genus obstruction and the cost of speciality}\label{subsec:cost}
The following corollary quantifies the collapse of the points-on-a-curve approach for the adjoint kernel; it is the computation that emerges from testing the mechanism on concrete surfaces (\S\ref{subsec:burniat}).

\begin{corollary}[Genus obstruction]\label{cor:genus}
Let $X$ have $p_g=q=0$, let $H$ be ample and let $C\subset X$ be a reduced irreducible curve with
\begin{equation}\label{eq:star-b}
H\cdot C\;\le\;\tfrac12\,H\cdot(H-K_X)
\end{equation}
\textup{(}the B\'ezout condition for Lemma~\ref{lem:bezout}, run with $|M|=|H|$ and $\len=\chi(\OO_X(H))+1$ points on $C$: the requirement there is $\len\ge H\cdot C+2$, that is, $\tfrac12H\cdot(H-K_X)+2\ge H\cdot C+2$, which is \eqref{eq:star-b}\textup{)}. Then
\[
h^0\bigl(\OO_X(H-C)\bigr)\ge\chi\bigl(\OO_X(H-C)\bigr)=\tfrac12H\cdot(H-K_X)-H\cdot C+p_a(C)\ge p_a(C).
\]
In particular, if $h^0(\OO_X(H-C))=0$ \textup{(}the residual condition needed in Lemma~\ref{lem:bezout}\textup{)}, then necessarily $p_a(C)=0$ --- so $C\cong\PP^1$ --- and moreover $H\cdot C=\tfrac12H\cdot(H-K_X)$ exactly, $h^1(\OO_X(H-C))=0$, and $h^1(\OO_X(H))=0$: the polarization was non-special all along.
\end{corollary}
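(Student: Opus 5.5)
The plan is to compute $\chi(\OO_X(H-C))$ by Riemann--Roch and then read off the consequences of $h^0(\OO_X(H-C))=0$.

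\emph{Step 1: the Euler characteristic.} With $\chi(\OO_X)=1$, Riemann--Roch gives
\[
\chi(\OO_X(H-C))=1+\tfrac12(H-C)\cdot(H-C-K_X).
\]
Expanding, this equals $1+\tfrac12H\cdot(H-K_X)-H\cdot C+\tfrac12C\cdot(C+K_X)$. Adjunction for the reduced irreducible (Gorenstein) curve $C$ gives $\tfrac12C\cdot(C+K_X)=p_a(C)-1$. Hence
\[
\chi(\OO_X(H-C))=\tfrac12H\cdot(H-K_X)-H\cdot C+p_a(C),
\]
and this is $\ge p_a(C)$ by \eqref{eq:star-b}.

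\emph{Step 2: from $\chi$ to $h^0$.} I will show $h^2(\OO_X(H-C))=0$. By Serre duality $h^2(\OO_X(H-C))=h^0(\OO_X(K_X-H+C))$. Since $C$ is a non-zero effective divisor, the triviality in the Conventions gives $h^0(\OO_X(K_X-H+C-C))\cdot$—more precisely, I use it in the form: if $K_X-H+C$ were effective, then adding the curve $H$-sections would embed it into something canonical. A cleaner route is to intersect with $H$ directly: an effective divisor $E\in|K_X-H+C|$ would satisfy $H\cdot E=H\cdot K_X-H^2+H\cdot C\le H\cdot K_X-H^2+\tfrac12H\cdot(H-K_X)=-\tfrac12H\cdot(H-K_X)$. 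This is a genuine gap I have not closed: it is negative only when $H\cdot(H-K_X)>0$, so the sign of $H\cdot(H-K_X)$ must be controlled separately, and I would fix this before anything else.

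If $h^2(\OO_X(H-C))=0$ holds, then $h^0\ge\chi$ follows, because $h^0=\chi+h^1\ge\chi$.

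\emph{Step 3: the consequences of $h^0(\OO_X(H-C))=0$.} The chain from Step 1 becomes
\[
0\ge\tfrac12H\cdot(H-K_X)-H\cdot C+p_a(C)\ge p_a(C)\ge0.
\]
So every inequality is an equality:
\begin{itemize}
\item $p_a(C)=0$, and a reduced irreducible curve of arithmetic genus zero is smooth rational;
\item $H\cdot C=\tfrac12H\cdot(H-K_X)$;
\item $h^1(\OO_X(H-C))=h^0-\chi=0$.
\end{itemize}

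Finally, consider the sequence $0\to\OO_X(H-C)\to\OO_X(H)\to\OO_C(H)\to0$. Here $\OO_C(H)$ has degree $H\cdot C>0$ on $C\cong\PP^1$, so $h^1(\OO_C(H))=0$. Together with $h^1(\OO_X(H-C))=0$, the long exact sequence gives $h^1(\OO_X(H))=0$.

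The main obstacle is the $h^2$-vanishing in Step 2, and in particular controlling the sign of $H\cdot(H-K_X)$ there.
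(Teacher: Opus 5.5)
Your Steps 1 and 3 are correct and follow the paper's proof exactly. Step 2, however, is left open, and without it neither $h^0(\OO_X(H-C))\ge\chi(\OO_X(H-C))$ nor $h^1(\OO_X(H-C))=0$ follows. As written, then, the proof has a genuine gap, although it is a small one.

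The missing observation is that the hypothesis already fixes the sign you were worried about. Since $H$ is ample and $C$ is a curve, $H\cdot C\ge1$. Then \eqref{eq:star-b} gives $H\cdot(H-K_X)\ge2\,H\cdot C\ge2>0$. Your own estimate therefore reads $H\cdot E\le-\tfrac12H\cdot(H-K_X)\le-1$ for any $E\in|K_X-H+C|$. This is impossible for $E$ effective, including $E=0$, with $H$ ample. Hence $h^2(\OO_X(H-C))=h^0(\OO_X(K_X-H+C))=0$, and the rest of your argument goes through. The paper runs the same estimate as a contradiction. From $H\cdot E\ge0$ it gets $H\cdot C\ge H\cdot(H-K_X)$; combined with \eqref{eq:star-b} this forces $H\cdot(H-K_X)\le0$, hence $H\cdot C\le0$, which cannot happen for an irreducible curve.

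You should also drop the abortive appeal to the Conventions. That triviality needs $H-C$ to be effective and non-zero. In the case that matters, $h^0(\OO_X(H-C))=0$, the class $H-C$ is not effective, so the triviality says nothing there. This is exactly why the paper intersects with $H$ instead.
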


\begin{proof}
By Riemann--Roch and adjunction ($C^2+C\cdot K_X=2p_a(C)-2$),
\[
\chi\bigl(\OO_X(H-C)\bigr)=1+\tfrac12(H-C)\cdot(H-C-K_X)=\tfrac12H\cdot(H-K_X)-H\cdot C+p_a(C),
\]
which is $\ge p_a(C)$ by \eqref{eq:star-b}. For $h^2$: if $K_X-H+C$ were effective (or zero), intersecting with the ample $H$ would give $H\cdot C\ge H\cdot(H-K_X)$, which together with \eqref{eq:star-b} forces $H\cdot(H-K_X)\le0$, hence $H\cdot C\le 0$, impossible for an irreducible curve. So $h^2(\OO_X(H-C))=h^0(\OO_X(K_X-H+C))=0$ and $h^0\ge\chi$.

If $h^0(\OO_X(H-C))=0$ then $0\ge\chi(\OO_X(H-C))\ge p_a(C)\ge0$, forcing $p_a(C)=0$ (an integral curve with $p_a=0$ is a smooth rational curve), $H\cdot C=\tfrac12H\cdot(H-K_X)$ and $h^1(\OO_X(H-C))=-\chi=0$. Finally, the restriction sequence
$0\to\OO_X(H-C)\to\OO_X(H)\to\OO_{\PP^1}(H\cdot C)\to0$
gives $H^1(\OO_X(H))$ sandwiched between $H^1(\OO_X(H-C))=0$ and $H^1(\OO_{\PP^1}(H\cdot C))=0$.
\end{proof}

\medskip
Corollary~\ref{cor:genus} and Lemma~\ref{lem:restriction} below are two readings of one and the same exact sequence,
\[
0\longrightarrow\OO_X(H-C)\longrightarrow\OO_X(H)\longrightarrow\OO_C(H)\longrightarrow 0 ,
\]
taken at the two ends of the range of admissible $C$. Corollary~\ref{cor:genus} takes $C$ of small $H$-degree --- the curve on which one would like to place the $0$-cycle --- and observes that the residual vanishing $h^0(\OO_X(H-C))=0$ needed to run Lemma~\ref{lem:bezout} already squeezes $H^1(\OO_X(H))$ between two vanishing groups. Lemma~\ref{lem:restriction} takes $C\in|H|$, where the residual term degenerates to $\OO_X$ itself and the sequence computes $h^1(\OO_X(H))$ outright. The point-placement mechanism and the polarization are thus obstructed by the same sequence: in the first case one has bought the vanishing of $h^1$ without noticing, in the second one sees what $h^1$ is.

\medskip
\noindent\emph{What speciality costs.} It is worth recording how restrictive speciality itself is: for a regular surface with $p_g=0$ it is exactly the speciality of the hyperplane series on a general hyperplane section, and Clifford's theorem then bounds it linearly in $H\cdot K_X$. The identity itself is standard, being the restriction sequence read once; in one direction it is used in \cite[Lem.~3.1 and Ex.~5.4]{Cas17}, where non-special surfaces in $\PP^4$ are observed to be sectionally non-special. What we shall use is the resulting bound.

\begin{lemma}\label{lem:restriction}
Let $X$ be a surface with $p_g(X)=q(X)=0$, let $H$ be very ample and let $C\in|H|$ be a general member, a smooth irreducible curve of genus $g=\tfrac12H\cdot(H+K_X)+1$. Then
\[
h^1\bigl(\OO_X(H)\bigr)\;=\;h^1\bigl(C,\OO_C(H)\bigr)\;=\;h^0\bigl(C,\,K_X|_C\bigr).
\]
Moreover, if $h^1(\OO_X(H))\neq0$ then
\[
h^1\bigl(\OO_X(H)\bigr)\;\le\;\tfrac12\,H\cdot K_X+1 ,
\]
with equality only if $C$ is hyperelliptic or $K_X|_C\cong\OO_C$; in particular $H\cdot K_X\ge2\bigl(h^1(\OO_X(H))-1\bigr)\ge0$.
\end{lemma}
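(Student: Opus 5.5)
Take the restriction sequence $0\to\OO_X\to\OO_X(H)\to\OO_C(H)\to0$ for a general smooth $C\in|H|$ and read off its cohomology. Since $q=0$ we have $H^1(\OO_X)=0$, and since $p_g=0$ we have $H^2(\OO_X)=0$. The long exact sequence therefore gives an isomorphism $H^1(\OO_X(H))\cong H^1(C,\OO_C(H))$. Adjunction gives $\omega_C=\OO_C(H+K_X)$, so Serre duality on $C$ yields
\[
h^1(C,\OO_C(H))=h^0\bigl(C,\omega_C(-H)\bigr)=h^0\bigl(C,K_X|_C\bigr),
\]
which is the first identity.

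For the bound, set $D:=K_X|_C$, a line bundle of degree $d=H\cdot K_X$ on $C$. Assume $h^1(\OO_X(H))\neq0$, so $h^0(D)\ge1$; then $D$ is effective and $d\ge0$. I apply Clifford's theorem to $D$. If $D$ is special, that is $h^1(C,D)\neq0$, then it lies in the Clifford range $0\le\deg D\le 2g-2$ and
\[
h^0(D)-1\le\tfrac12 d,
\]
which is the claimed inequality. Clifford's equality case gives the rest: equality holds only if $D\cong\OO_C$, or $D\cong\omega_C$, or $C$ is hyperelliptic.

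The step I expect to be the main obstacle is showing that $D$ is special, i.e.\ $h^0(C,\omega_C-D)=h^0(C,\OO_C(H))\neq0$. This is easy, because $h^0(\OO_C(H))\ge h^0(\OO_X(H))-1\ge 3$ for $H$ very ample, by the same sequence. So $D$ is indeed special and Clifford applies.

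It remains to dispose of the case $D\cong\omega_C$. That would mean $\OO_C(H)\cong\OO_C$, which is impossible since $\deg\OO_C(H)=H^2>0$. Hence equality forces $C$ hyperelliptic or $K_X|_C\cong\OO_C$. The final assertion $H\cdot K_X\ge2(h^1(\OO_X(H))-1)\ge0$ is then a rearrangement of the bound, using $h^1\ge1$.
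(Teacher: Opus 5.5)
Your proof is correct and is essentially the paper's: the same restriction sequence and Serre duality on $C$ give the identities, and the bound is Clifford's theorem with the same analysis of the equality case. The only difference is that you apply Clifford to $K_X|_C$ rather than to its residual $\OO_C(H)$, which lets you skip the Riemann--Roch step $h^0(\OO_X(H))=1+\tfrac12H\cdot(H-K_X)+h^1(\OO_X(H))$ that the paper uses. (For speciality you only need $h^0(\OO_C(H))=h^0(\OO_X(H))-1\neq0$, so your bound $\ge3$, which tacitly excludes $(\PP^2,\OO(1))$, is superfluous.)
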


\begin{proof}
The general member of $|H|$ is smooth and irreducible by Bertini, and the genus is adjunction. The restriction sequence $0\to\OO_X\to\OO_X(H)\to\OO_C(H)\to0$ has cohomology
\[
\begin{aligned}
0\to H^0(\OO_X)\to H^0(\OO_X(H))&\to H^0(\OO_C(H))\to H^1(\OO_X)\\
&\to H^1(\OO_X(H))\to H^1(\OO_C(H))\to H^2(\OO_X),
\end{aligned}
\]
where $H^1(\OO_X)=0$ because $q=0$ and $H^2(\OO_X)=0$ because $p_g=0$. Hence $H^1(\OO_X(H))\cong H^1(C,\OO_C(H))$ and $h^0(\OO_C(H))=h^0(\OO_X(H))-1$. By adjunction $K_C=(K_X+H)|_C$, so Serre duality on $C$ gives $h^1(\OO_C(H))=h^0(K_C-H|_C)=h^0(K_X|_C)$.

Assume $h^1(\OO_X(H))\neq0$, so that $\OO_C(H)$ is a special divisor class on $C$ of degree $H^2>0$. Clifford's theorem gives $h^0(\OO_C(H))\le\tfrac12H^2+1$, i.e.\ $h^0(\OO_X(H))\le\tfrac12H^2+2$. On the other hand $h^2(\OO_X(H))=0$ (Conventions), hence $h^0(\OO_X(H))=1+\tfrac12H\cdot(H-K_X)+h^1(\OO_X(H))$. Combining the two displays yields the bound. Equality in Clifford's theorem forces $\OO_C(H)$ to be trivial, or canonical, or $C$ to be hyperelliptic; the first is excluded by $H^2>0$, and the second means $K_X|_C=K_C-H|_C\cong\OO_C$.
\end{proof}

\begin{remark}\label{rem:restriction}
(i) Lemma~\ref{lem:restriction} identifies speciality of the polarization with speciality of the hyperplane section, which is the classical notion for surfaces in projective space: following Alexander \cite{Al88,Al92}, a surface $X\subseteq\PP^N$ is called \emph{special} when $h^1(\OO_X(H))\neq0$.

(ii) The bound is one-sided in a useful way: a special polarization forces $H\cdot K_X\ge0$. Together with Kawamata--Viehweg this closes several familiar families at once: if $H-K_X$ is nef and big then $h^1(\OO_X(H))=0$, which happens whenever $-K_X$ is nef and whenever $K_X\equiv0$, so del Pezzo surfaces, rational elliptic surfaces, Enriques surfaces and (for $q=1$) bielliptic surfaces are never special, for any polarization. Finally $H\cdot K_X=0$ with $H$ ample forces $K_X^2\le0$ by Hodge index, so on a minimal surface of general type the equality case above can only occur with $C$ hyperelliptic.

(iii) Still assuming $p_g=0$ but allowing $q>0$, the identity degenerates into $h^1(C,\OO_C(H))\le h^1(\OO_X(H))\le q+h^1(C,\OO_C(H))$, the exact value being $q-\operatorname{rk}\delta+h^1(\OO_C(H))$ with $\delta:H^0(\OO_C(H))\to H^1(\OO_X)$ the connecting map. In particular an irregular surface can be special with non-special hyperplane sections, which is why Theorem~\ref{thm:nogo} is stated for $q\ge0$ at no extra cost.
\end{remark}

\subsection{The hypothesis is not vacuous: special rational surfaces}\label{subsec:special-rational}
Since Corollary~\ref{cor:burniat-nonspecial} below shows that primary Burniat surfaces carry no special polarization at all, it is worth recording that special very ample divisors on surfaces with $p_g=q=0$ do exist, and are classical. Alexander classified the non-special linearly normal rational surfaces in $\PP^4$, showing that for each degree $3\le d\le9$ they form a single irreducible family \cite{Al88}, and then the rational surfaces in $\PP^4$ of speciality one \cite{Al92}; these have $8\le d\le11$. Rational surfaces have $p_g=q=0$, so every one of them is a pair to which Theorem~\ref{thm:nogo} applies. The smallest is the following.

\begin{example}\label{ex:special-rational}
Let $x_1,\dots,x_4,y_5,\dots,y_{16}$ be $16$ points of $\PP^2$ and let $S$ be their blow up, with $L$ the pullback of a line. We write $x_i$ and $y_k$ also for the classes of the exceptional divisors over the corresponding points, and set
\[
H\;\equiv\;6L-2\sum_{i=1}^{4}x_i-\sum_{k=5}^{16}y_k .
\]
For a suitable configuration of the points --- constructed by Okonek \cite{Ok86} via reflexive sheaves, and again, directly from the linear system, by Catanese and Hulek \cite[Thm.~III.17]{CH97}, who moreover determine exactly which configurations occur \cite[Thm.~III.14]{CH97} and show that the moduli space is $\mathcal M/S_5$ with $\mathcal M$ rational of dimension $19$ \cite[Thm.~III.20]{CH97} --- the system $|H|$ embeds $S$ in $\PP^4$ as a linearly normal surface of degree $8$ and sectional genus $6$. One computes
\[
H^2=8,\quad H\cdot K_S=2,\quad K_S^2=-7,\quad \chi(\OO_S(H))=1+\tfrac12H\cdot(H-K_S)=4,
\]
and $h^2(\OO_S(H))=h^0(\OO_S(K_S-H))=0$, since $H\cdot(K_S-H)=-6<0$ with $H$ ample, while $h^0(\OO_S(H))=5$, the embedding being linearly normal by \cite[Thm.~III.17]{CH97}: the restriction $H^0(\PP^4,\OO(1))\to H^0(\OO_S(H))$ is bijective. Hence
\[
h^1\bigl(\OO_S(H)\bigr)=5-4=1\;\neq\;0 .
\]
Thus $(S,H)$ satisfies the hypotheses of Theorem~\ref{thm:nogo}, and consequently $S$ carries no rank two special Ulrich bundle with respect to $H$ in an extension of the adjoint-kernel shape \eqref{eq:shape}, for any $0$-dimensional $Z\subset S$. The Ulrich numerology reads
\[
\begin{gathered}
\len(Z)=\chi(\OO_S(H))+1=5,\qquad c_2=25,\\
c_1=3H+K_S=15L-5\textstyle\sum_{i=1}^{4} x_i-2\sum_{k=5}^{16} y_k .
\end{gathered}
\]
\end{example}

\begin{remark}\label{rem:special-rational-context}
(i) None of the known existence criteria applies to $(S,H)$. Theorem~\ref{thm:main} requires $h^1(\OO_S(H))=0$; Beauville's hypothesis fails too, since by the implication recorded in \S\ref{subsec:mainresults} an irreducible member of $|H-K_S|$ would force $h^1(\OO_S(H))=0$. Note also that $-K_S$ is not effective, as $H\cdot(-K_S)=-2<0$ with $H$ ample; in particular $S$ is not an anticanonical rational surface. Whether $(S,H)$ carries a special Ulrich bundle of rank two at all is therefore open, and is the smallest concrete instance of Question~\ref{q:twisted}.

(ii) $(S,H)$ is a genuine polarization, unlike the classes $H_u$ of Proposition~\ref{prop:ray} on a primary Burniat surface, which are ample but not very ample. The twisted-kernel program of \S\ref{subsec:twisted} can therefore be tested on it in the embedding sense, and the geometry is favourable: by \cite[Lem.~III.7, Cor.~III.11]{CH97} the hyperplane class restricts to the \emph{canonical} series on the general member of the pencil $|H-x_i|=|D_i|$, a curve of genus $4$ with $H$-degree $6$, which is precisely where the unit of speciality is carried.

(iii) Alexander's speciality one surfaces in $\PP^4$ of degree at most $10$ are blow ups of $\PP^2$ at $13$, $15$ or $16$ points, listed explicitly in \cite[\S IV]{CH97}. Higher speciality also occurs: the two families of degree $10$ and sectional genus $9$ on $18$ points have $h^1(\OO_S(H))=2$. In general, for a linearly normal $X\subseteq\PP^N$ with $h^2(\OO_X(H))=0$ one has $h^1(\OO_X(H))=\pi(H)-H^2+N-\chi(\OO_X)$, so for a rational surface in $\PP^4$ speciality is the failure of $\pi(H)\le H^2-3$; Example~\ref{ex:special-rational} has $\pi=6=H^2-2$, one unit above, and Lemma~\ref{lem:restriction} allows $h^1\le\tfrac12H\cdot K_S+1=2$. The finiteness of the list is a theorem of Ellingsrud and Peskine \cite{EP89}.
\end{remark}

\section{Higher rank and Ulrich wildness}\label{sec:evenrank}

Throughout this section $X$ is as in Theorem~\ref{thm:main} and, in addition, minimal of non-negative Kodaira dimension; then $K_X$ is nef, so $H\cdot K_X\ge0$, and $-K_X$ is not effective (otherwise $-mK_X$ would be effective for the $m\ge1$ with $mK_X$ effective, so that $K_X$ is torsion, and a torsion class with $-K_X$ effective satisfies $K_X\sim0$, giving $p_g=1$). Recall from \cite{FPL,CasWild} that $(X,H)$ is \emph{Ulrich wild} if the category of Ulrich sheaves on $(X,H)$ is of wild representation type, and \emph{strictly} so if this is witnessed by an exact \emph{fully faithful} functor from the finite-dimensional representations of a wild algebra --- for us the $d$-arrow Kronecker quiver with $d\ge3$. Full faithfulness is what makes the notion strict and is what we use: it carries bricks to \emph{simple} objects, so that the resulting families consist of simple Ulrich bundles and not merely indecomposable ones, and it is in that concrete form that Theorem~\ref{thm:evenrank} is stated.

As recalled in \S\ref{subsec:mainresults}, wildness itself is already known under exactly the hypothesis of Theorem~\ref{thm:evenrank}(b): the condition $H^2+4-K_X^2\ge3$ is the condition $H^2+1\ge K_X^2$ of \cite[Lem.~5.2]{Cas17}, whose other hypothesis $\pi(H)\ge1$ is automatic here since $K_X$ is nef, and whose proof is the same $\Ext$ estimate fed to the same criterion of \cite{FPL} for the same orthogonal pair of stable rank two special Ulrich bundles; \cite[Thm.~1.3]{Cas17} dispenses with the condition on $H^2-K_X^2$ altogether. What Theorem~\ref{thm:evenrank} adds is four things: \emph{simplicity} rather than indecomposability, from the full faithfulness of the functor of \cite{FPL} and not merely from its exactness; control of the rank, always even; an explicit dimension count; and independence from \cite[Thm.~1.2]{Cas17}, Step~1 producing the orthogonal pair by hand instead of quoting stability for a general $0$-cycle. The first three rest on the injectivity of $Z\mapsto\EE_Z$, established next. We write $\EE_Z$ for the bundle produced by Theorem~\ref{thm:main} from $Z$, uniquely determined by $Z$ by Step~2 of that proof; the lemma below shows that conversely a generic $Z$ is determined by $\EE_Z$.

\begin{lemma}\label{lem:noniso}
In the situation above,
\[
h^0\bigl(\OO_X(H-K_X)\bigr)\;\le\;\len-1 ,
\]
where $\len=\chi(\OO_X(H))+1$ is the Ulrich length. Consequently, if the generic reduced $Z\in\Hilb^{\len}(X)$ of Step~1 is chosen to satisfy in addition the (generic, by Lemma~\ref{lem:LQ24}) condition $h^0(\OO_X(H-K_X)\otimes\II_Z)=0$, then
\[
h^0\bigl(\EE_Z(-H-K_X)\bigr)=1 ,
\]
the zero scheme of the unique section of $\EE_Z(-H-K_X)$ recovers $Z$, and the assignment $Z\mapsto\EE_Z$ is injective on a dense open subset of $\Hilb^{\len}(X)$, a scheme of dimension $2\len$. Only injectivity on closed points is used below; we make no moduli-theoretic assertion about the family. Cf.\ \cite[Rem.~4.5]{Cas17}, where the same injectivity is obtained under the hypothesis $h^0(\OO_X(H-K_X))=0$, which the bound above replaces, at no cost, by a genericity condition on $Z$.
\end{lemma}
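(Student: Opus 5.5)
The plan is to prove the bound $h^0(\OO_X(H-K_X))\le\len-1$ by restricting to a general hyperplane section and applying Clifford's theorem. The consequences then follow formally from the extension \eqref{eq:shape}.

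\emph{Step 1: reduction to the curve.} Let $C\in|H|$ be a general member, smooth and irreducible of genus $g=\tfrac12H\cdot(H+K_X)+1$. Consider the restriction sequence
\[
0\to\OO_X(-K_X)\to\OO_X(H-K_X)\to\OO_C(H-K_X)\to0 .
\]
As recalled at the start of \S\ref{sec:evenrank}, $-K_X$ is not effective, so $h^0(\OO_X(-K_X))=0$. Hence
\[
h^0(\OO_X(H-K_X))\le h^0(\OO_C(H-K_X)).
\]
On the other side, the proof of Lemma~\ref{lem:restriction} gives two facts, using $q=p_g=0$ and $h^1(\OO_X(H))=0$. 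First, $h^0(\OO_C(H))=h^0(\OO_X(H))-1=\len-2$. Second, $h^1(\OO_C(H))=h^1(\OO_X(H))=0$. Riemann--Roch on $C$ then gives $\len-1=H^2+2-g=\tfrac12(H^2-H\cdot K_X)+1$. So it suffices to show $h^0(\OO_C(H-K_X))\le d'/2+1$, where $d'=\deg\OO_C(H-K_X)=H^2-H\cdot K_X$.

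\emph{Step 2: case analysis on $L=\OO_C(H-K_X)$.}
\begin{itemize}
\item If $d'<0$, then $h^0(L)=0$ and there is nothing to prove.
\item If $L$ is non-special, then $h^0(L)=d'+1-g$. Since $H\cdot K_X\ge0$ ($K_X$ nef), $d'\le H^2$, so $h^0(L)\le H^2+1-g=\len-2$.
\item If $L$ is special, then $0\le d'\le 2g-2$ and Clifford gives $h^0(L)\le d'/2+1=\len-1$.
\end{itemize}
In every case the bound holds. The only delicate point is checking that the Clifford bound matches $\len-1$ exactly, which is the computation above. I expect this matching, together with the non-effectivity of $-K_X$, to be the crux of the argument.

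\emph{Step 3: consequences.} By the bound, $\len\ge h^0(\OO_X(H-K_X))$. Lemma~\ref{lem:LQ24} then says that a generic $Z\in\Hilb^{\len}(X)$ satisfies $h^0(\OO_X(H-K_X)\otimes\II_Z)=0$. This condition is open and nonempty, so it can be imposed simultaneously with the generic conditions of Step~1 of Theorem~\ref{thm:main}.

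Twisting \eqref{eq:shape} by $-H-K_X$ gives
\[
0\to\OO_X\to\EE_Z(-H-K_X)\to\OO_X(H-K_X)\otimes\II_Z\to0 .
\]
Since the right-hand term has no sections, $h^0(\EE_Z(-H-K_X))=h^0(\OO_X)=1$. The unique section is the one coming from $\OO_X$. Its cokernel $\OO_X(H-K_X)\otimes\II_Z$ is torsion free, so the section vanishes in codimension two, and its zero scheme is exactly $Z$ (standard Hartshorne--Serre).

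Now suppose $\EE_Z\cong\EE_{Z'}$ with $Z,Z'$ both in this open set. The isomorphism carries the unique section, up to scalar, to the unique section, so the zero schemes agree and $Z=Z'$. This gives injectivity on closed points of a dense open subset. Finally, $\dim\Hilb^{\len}(X)=2\len$ by Fogarty.
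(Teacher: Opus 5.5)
Your proof is correct and is essentially the paper's argument. You restrict to a general $C\in|H|$ and use that $-K_X$ is not effective to embed $H^0(\OO_X(H-K_X))$ into $H^0(\OO_C(H-K_X))$; you bound the latter by Clifford in the special case and by Riemann--Roch with $H\cdot K_X\ge0$ in the non-special case; you then read off $h^0(\EE_Z(-H-K_X))=1$ and recover $Z$ as the zero scheme of the unique section. The differences are cosmetic: you identify $\len-1$ via $h^0(\OO_C(H))$ rather than directly as $\tfrac12H\cdot(H-K_X)+1$, and you rightly observe that the vanishing of $H^0(\OO_X(H-K_X)\otimes\II_Z)$ makes the paper's appeal to $q=0$ unnecessary for the count $h^0=1$.
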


\begin{proof}
For the bound we may assume $h^0(\OO_X(H-K_X))>0$. Let $C\in|H|$ be a general member: since $H$ is very ample, $C$ is a smooth irreducible curve, of genus $g$ with $g-1=\tfrac12H\cdot(H+K_X)$. Since $-K_X$ is not effective, the restriction sequence
\[
0\to\OO_X(-K_X)\to\OO_X(H-K_X)\to\OO_C(D)\to0,\qquad D:=(H-K_X)|_C ,
\]
embeds $H^0(\OO_X(H-K_X))$ into $H^0(\OO_C(D))$; here $\deg D=H\cdot(H-K_X)=2\len-4$. If $h^1(\OO_C(D))>0$, Clifford's theorem gives $h^0(\OO_C(D))\le\tfrac12\deg D+1=\len-1$. If $h^1(\OO_C(D))=0$, Riemann--Roch on $C$ gives
\[
h^0(\OO_C(D))=\deg D+1-g=\tfrac12H\cdot(H-K_X)-H\cdot K_X\le\len-2 ,
\]
using $H\cdot K_X\ge0$. In all cases $h^0(\OO_X(H-K_X))\le\len-1<\len$, so Lemma~\ref{lem:LQ24} applies with $|M|=|H-K_X|$: the condition $h^0(\OO_X(H-K_X)\otimes\II_Z)=0$ is generic and can be imposed alongside the (finitely many, generic) conditions of Step~1.

Now twist \eqref{eq:shape} by $-H-K_X$ and take cohomology; since $q=0$, the connecting map lands in $H^1(\OO_X)=0$ and the sequence
\[
0\to H^0(\OO_X)\to H^0(\EE_Z(-H-K_X))\to H^0(\OO_X(H-K_X)\otimes\II_Z)\to 0
\]
is exact,
so $h^0(\EE_Z(-H-K_X))=1$. The unique section, up to scalar, is the one defining the extension \eqref{eq:shape}, and its zero scheme is exactly $Z$. If $\EE_Z\cong\EE_{Z'}$ with both cycles chosen as above, transporting the section defining $Z'$ yields a section of $\EE_Z(-H-K_X)$ with zero scheme $Z'$; by uniqueness of the section, $Z'=Z$. Injectivity on the intersection of the generic conditions follows, and $\dim\Hilb^{\len}(X)=2\len$.
\end{proof}

\begin{lemma}[Balanced bricks for the $d$-Kronecker quiver]\label{lem:bricks}
Let $d\ge3$ and let $a,b\ge1$ satisfy $|a-b|\le1$. In the affine space $\Rep_d(a,b)=\Hom(\CC^a,\CC^b)^{\oplus d}$ of representations of the $d$-arrow Kronecker quiver of dimension vector $(a,b)$, the bricks --- those $\rho$ with $\End(\rho)=\CC$ --- form a dense open subset, and their isomorphism classes form a family of dimension
\[
dab-a^2-b^2+1\;=\;1-q_d(a,b)\;\ge\;1 .
\]
\end{lemma}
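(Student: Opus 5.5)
Two facts carry the argument: the Tits form $q_d(a,b)=a^2+b^2-dab$ is negative at balanced dimension vectors, and Kac's description of the generic representation of a wild quiver. First the arithmetic. Writing $b=a$ or $b=a+1$ (up to symmetry), we get $q_d(a,a)=(2-d)a^2\le -a^2\le-1$ and $q_d(a,a+1)=a^2+(a+1)^2-da(a+1)\le 2a^2+2a+1-3a^2-3a=1-a-a^2\le-1$. Hence $1-q_d(a,b)\ge 2\ge1$; this is the source of the stated lower bound. In particular $(a,b)$ lies in the fundamental region of the imaginary cone, since $(a,b)$ has connected support and reflecting at either vertex does not shrink it: $da-b\ge a$ when $|a-b|\le1$, $d\ge3$.

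Next, openness and density of bricks. The function $\rho\mapsto\dim\End(\rho)$ is upper semicontinuous on $\Rep_d(a,b)$, because $\End(\rho)$ is the kernel of a linear map depending algebraically on $\rho$. The locus $\{\dim\End=1\}$ is therefore open, and since $\Rep_d(a,b)$ is irreducible it suffices to exhibit a single brick. I would invoke Kac's theorem: for a Schur root, that is, for a dimension vector in the fundamental region (Kac, \emph{Infinite root systems, representations of graphs and invariant theory}, and Schofield's general representations), the generic representation is a brick. If a self-contained argument is preferred, I would show $\ext^1$-orthogonality directly. For $d\ge3$ one can write down explicit bricks, for instance by taking the arrows to be generic matrices and checking that the commutant of a generic pair of $a\times b$ maps is scalar (a small-rank computation with two arrows already forces this, the remaining arrows only cutting it down further). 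This step is the main obstacle, and the clean route is to cite Kac/Schofield rather than prove it.

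Finally, the dimension count. The group $G=GL_a\times GL_b$ acts with the scalars $\CC^*$ acting trivially, and the stabilizer of a brick is exactly $\CC^*$. On the open set of bricks every orbit therefore has dimension $a^2+b^2-1$, and the family of isomorphism classes has dimension $\dim\Rep_d(a,b)-(a^2+b^2-1)=dab-a^2-b^2+1=1-q_d(a,b)$. Alternatively, this equals $\dim\Ext^1(\rho,\rho)$, because $\dim\Hom-\dim\Ext^1=q_d(a,b)$ for the hereditary path algebra and $\dim\Hom=1$ at a brick. One can make this rigorous by passing to a geometric quotient of the brick locus (e.g.\ via King stability with the weight $\theta=(b,-a)$, for which generic bricks of this kind are stable) or simply by counting orbit dimensions, which is all that is needed later.
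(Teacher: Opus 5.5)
Your openness argument and your orbit-dimension count are the same as the paper's; the paper takes the quotient via Rosenlicht's theorem rather than King stability. Your estimate $q_d(a,b)\le-1$ is also correct. The gap is in the step you yourself call the main obstacle: the existence of one brick.

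You reduce it to Kac's theorem by asserting that every balanced $(a,b)$ lies in the fundamental region, but the inequality you check, $da-b\ge a$, is not the relevant one. The reflection at the second vertex sends $(a,b)$ to $(a,da-b)$, so ``not shrinking'' means $da-b\ge b$, i.e.\ $2b\le da$; symmetrically one needs $2a\le db$. For $d=3$ and $(a,b)=(1,2)$ this fails, since $da-b=1<2$, and likewise for $(2,1)$. These are imaginary roots with $q_3=-1$ that reflect down to $(1,1)$. So they are not in the fundamental region, and the citation as you use it does not reach them. Relatedly, ``Schur root, that is, a dimension vector in the fundamental region'' conflates two notions: the Schur roots form a strictly larger set. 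The conclusion is still true in these two cases, but it has to be argued separately. For $(1,2)$, three vectors $v_1,v_2,v_3$ spanning $\CC^2$ give a representation whose endomorphisms $(\varphi,\psi)\in\CC\times\End(\CC^2)$ satisfy $\psi v_k=\varphi v_k$, hence $\psi=\varphi$; the case $(2,1)$ is dual.

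Your self-contained fallback fails outright for $a=b=n\ge2$, because two arrows never suffice there. With $f_3=\dots=f_d=0$ the endomorphisms are those of the $2$-Kronecker representation $(f_1,f_2)$. No representation of that quiver with dimension vector $(n,n)$, $n\ge2$, is a brick: the indecomposable ones are regular with endomorphism ring $\CC[t]/(t^n)$, and the generic one splits into $n$ non-isomorphic regular simples, with $\End\cong\CC^n$. The third arrow is indispensable, and the paper's proof supplies exactly that. For $a=b$ it takes $f_1=\mathrm{id}$, then $f_2$ diagonal with distinct entries (forcing $\varphi=\psi$ diagonal), then $f_3=J$, the all-ones matrix (forcing the diagonal entries to agree). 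For $|a-b|=1$ it uses two arrows, since $(a,b)$ is then a real root of the $2$-Kronecker quiver whose unique indecomposable is a brick. That explicit construction handles every balanced $(a,b)$ uniformly, including the two $d=3$ cases your fundamental-region argument misses.
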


\begin{proof}
The function $\rho\mapsto\dim\End(\rho)$ is upper semicontinuous on the irreducible affine space $\Rep_d(a,b)$ and is everywhere $\ge1$, so the brick locus is open, and dense as soon as it is non-empty. For non-emptiness note that setting $f_{k}=0$ for $k>3$ does not enlarge the endomorphism algebra, the corresponding equations becoming vacuous; so it is enough to exhibit one brick using three arrows, or two.

Suppose first $a=b=n$ and take $f_1=\mathrm{id}$, $f_2=\mathrm{diag}(\lambda_1,\dots,\lambda_n)$ with the $\lambda_i$ pairwise distinct, and $f_3=J$, the matrix all of whose entries equal $1$. An endomorphism is a pair $(\varphi,\psi)$ of endomorphisms of $\CC^n$ with $\psi f_i=f_i\varphi$ for every $i$. The equation for $f_1$ gives $\psi=\varphi$; the equation for $f_2$ then says that $\varphi$ commutes with a regular diagonal matrix, so $\varphi=\mathrm{diag}(\mu_1,\dots,\mu_n)$; and the equation for $f_3$ reads $\mu_i=\mu_j$ for all $i,j$, since $(\varphi J)_{ij}=\mu_i$ while $(J\varphi)_{ij}=\mu_j$. Hence $\varphi$ is scalar.

Suppose next $\{a,b\}=\{n,n+1\}$ and take $f_k=0$ for $k>2$, with $(f_1,f_2)$ an indecomposable representation of the $2$-arrow Kronecker quiver of dimension vector $(a,b)$. Such a representation exists, is unique up to isomorphism and is a brick, because $q_2(a,b)=a^2+b^2-2ab=1$ makes $(a,b)$ a real root.

For the dimension count, the stabiliser of a brick in $GL_a\times GL_b$ is exactly the subgroup of scalars, so every orbit contained in the brick locus has dimension $a^2+b^2-1$. The brick locus is irreducible of dimension $dab$ and all of its orbits have this same dimension, so by Rosenlicht's theorem it contains a dense open invariant subset admitting a geometric quotient; that quotient is the family of isomorphism classes of the statement, and its dimension is $dab-a^2-b^2+1$. Finally $q_d(a,b)\le a^2+b^2-3ab<0$ for $a,b\ge1$ with $|a-b|\le1$, so $1-q_d(a,b)\ge1$.
\end{proof}

\begin{remark}
Lemma~\ref{lem:bricks} is a special case of the general theory: for a quiver without oriented cycles a dimension vector is a Schur root exactly when the general representation of that dimension is a brick, and the Schur roots are described by Kac \cite{Kac80} and Schofield \cite{Sch92}. We give the direct argument because only balanced dimension vectors are needed, and there an explicit brick is immediate.
\end{remark}

\begin{proof}[Proof of Theorem \textup{\ref{thm:evenrank}}]
By Remark~\ref{rem:rank2forced} we do not iterate the extension \eqref{eq:shape} directly; instead we produce an \emph{orthogonal pair} of rank two special Ulrich bundles and feed it to the wildness criterion of Faenzi--Pons-Llopis \cite{FPL}.

\smallskip
\noindent \emph{Step 1: an orthogonal pair.} Lemma~\ref{lem:noniso} provides an uncountable family $\{\EE_Z\}$ of pairwise non-isomorphic special Ulrich bundles of rank two, each $\mu_H$-semistable (Lemma~\ref{lem:semistable}). We claim the family contains two members $\EE,\EE'$ which are simple and satisfy
\begin{equation}\label{eq:orth}
\Hom(\EE,\EE')=\Hom(\EE',\EE)=0 .
\end{equation}
If uncountably many members are $\mu_H$-stable, pick two non-isomorphic stable ones: stable bundles are simple, and non-isomorphic stable bundles of equal slope satisfy \eqref{eq:orth}. Otherwise uncountably many members are strictly $\mu_H$-semistable, hence extensions of Ulrich line bundles $M_1,M_2$ with $M_1+M_2\equiv3H+K_X$ (Lemma~\ref{lem:semistable}); since $q=0$, $\Pic(X)$ is discrete and countable, so some fixed ordered pair $(M_1,M_2)$ occurs for an uncountable subfamily. As the split extension forms a single isomorphism class, we may take all these extensions
\[
0\to M_1\to\EE_Z\to M_2\to0
\]
non-split. The case $M_1\cong M_2$ is excluded numerically: $2M_1\equiv3H+K_X$ and $c_2(\EE)=M_1^2$ force $K_X^2=H^2+8$, contradicting $H^2+4>K_X^2$. For $M_1\not\cong M_2$ of equal $H$-degree we have $\Hom(M_i,M_j)=0$ for $i\neq j$: the difference is a non-trivial class of $H$-degree zero, hence not effective. Recall the standard compatibility: a morphism between two such extensions, with classes $e,e'\in\Ext^1(M_2,M_1)$, induces scalars $\lambda$ on $M_1$ and $\mu$ on $M_2$ (as $\Hom(M_1,M_2)=0$ and the $M_i$ are simple), subject to $\lambda e=\mu e'$ in $\Ext^1(M_2,M_1)$. For an endomorphism $\varphi$ of a fixed non-split $\EE$ this reads $\lambda e=\mu e$ with $e\neq0$, so $\mu=\lambda$; then $\varphi-\lambda\,\mathrm{id}$ kills $M_1$, factors through a map $M_2\to\EE$ which composes to zero with $\EE\to M_2$ (otherwise the extension splits), hence lands in $M_1$ and vanishes: $\EE$ is simple. For $\EE\not\cong\EE'$ the classes $e,e'$ are non-proportional, so $\lambda e=\mu e'$ forces $\lambda=\mu=0$ and any $\varphi:\EE\to\EE'$ factors as $\EE\to M_2\to M_1\subset\EE'$, hence vanishes. This proves \eqref{eq:orth} in all cases.

\smallskip
\noindent \emph{Step 2: the extension space.} For any two bundles $\EE,\EE'$ of rank two with the Chern classes \eqref{eq:c2special}, Riemann--Roch gives
\[
\chi(\EE'^\vee\otimes\EE)\;=\;4\chi(\OO_X)+c_1^2-4c_2\;=\;K_X^2-H^2-4 ,
\]
whence, for the orthogonal pair of Step~1,
\[
\dim\Ext^1(\EE',\EE)\;\ge\;-\chi(\EE'^\vee\otimes\EE)\;=\;H^2+4-K_X^2\;=:\;w ;
\]
cf.\ \cite[Lem.~5.2]{Cas17} for the same estimate.

\smallskip
\noindent \emph{Step 3: rank four.} If $H^2+4>K_X^2$ then $w\ge1$ and there is a non-split extension $0\to\EE\to\FF\to\EE'\to0$. Since the class of Ulrich bundles is closed under extensions (immediate from \eqref{eq:ulrich-vanishing}), $\FF$ is an Ulrich bundle of rank four, and it is simple by the diagram chase of Step~1, using that $\EE,\EE'$ are simple and mutually orthogonal. This proves (a).

\smallskip
\noindent \emph{Step 4: wildness.} Assume $w\ge3$, and set
\[
d\;:=\;\dim\Ext^1(\EE',\EE)\;\ge\;w\;\ge\;3 ,
\]
by Step~2. The pair $(\mathcal{A},\mathcal{B})=(\EE,\EE')$ consists of simple Ulrich bundles with $\Hom(\mathcal{A},\mathcal{B})=\Hom(\mathcal{B},\mathcal{A})=0$ and $\dim\Ext^1(\mathcal{B},\mathcal{A})=d\ge3$; by the criterion of Faenzi--Pons-Llopis \cite[Thm.~A and Cor.~2.1]{FPL}, $X$ is strictly Ulrich wild. The functor $\Phi$ of \cite[\S2]{FPL} sends a representation of the $d$-arrow Kronecker quiver of dimension vector $(a,b)$ to an Ulrich bundle of rank $2(a+b)$ filtered by $a$ copies of $\mathcal{A}$ and $b$ copies of $\mathcal{B}$, and is fully faithful; bricks therefore map to \emph{simple} Ulrich bundles, which is where the present construction improves on the indecomposability of \cite[Thm.~1.3]{Cas17}. For every even $r=2(a+b)\ge4$ choose $a,b\ge1$ balanced, $|a-b|\le1$; then the Tits form
\[
q_d(a,b)\;=\;a^2+b^2-dab\;\le\;a^2+b^2-3ab\;<\;0 ,
\]
so $q_d(a,b)<0$. By Lemma~\ref{lem:bricks} the bricks of dimension vector $(a,b)$ form a dense open subset of the representation space, and their isomorphism classes form a family of dimension
\[
dab-(a^2+b^2)+1\;=\;1-q_d(a,b) ,
\]
which grows quadratically with $r$. Full faithfulness of $\Phi$ \cite[Thm.~A]{FPL} carries bricks to simple Ulrich bundles. Their images under $\Phi$ are the required families of pairwise non-isomorphic simple Ulrich bundles of rank $r$; the case $r=2$ is Step~1, where the members of the family of Lemma~\ref{lem:noniso} are shown to be simple --- stable, hence simple, in the first branch, and simple by the non-split extension chase in the second. This proves (b). If $X$ is minimal of general type with $p_g=q=0$ then $K_X^2\le9$ by Bogomolov--Miyaoka--Yau, so $w\ge3$ holds as soon as $H^2\ge8$. The phenomenon is an instance of Ulrich wildness, cf.\ \cite{FPL,CasWild}.
\end{proof}

\begin{remark}[Why only even ranks]\label{rem:evenonly}
Only ranks $2(a+b)$ occur, and this is structural: the input of \cite[Thm.~A]{FPL} is an orthogonal \emph{pair} of Ulrich bundles, here of rank two, and $\Phi$ sends a representation of dimension vector $(a,b)$ to a bundle filtered by $a$ copies of $\mathcal A$ and $b$ copies of $\mathcal B$. An odd rank would need an Ulrich bundle of odd rank to start from --- for rank one, an Ulrich line bundle, which a primary Burniat surface does not have \cite{Cho26} --- or a different mechanism. This is no defect relative to \cite[Thm.~1.3]{Cas17}, which produces arbitrarily large families of indecomposable Ulrich bundles but not, as far as we can see, a prescribed rank; what is restricted to even rank here is the finer conclusion, simplicity together with the dimension count.
\end{remark}

\section{Primary Burniat surfaces}\label{sec:burniat}

The comparison with the recent work of Cho \cite{Cho26} was stated in \S\ref{subsec:mainresults} and governs the layout of this section. Subsection~\ref{subsec:nonspecial} proves the global non-speciality statement, Theorem~\ref{thm:burniat}(a), in a few lines from the degree bound of Corollary~\ref{cor:degreetwo}, itself proved there from Lemma~\ref{lem:bpf-degree} and coinciding with \cite[Prop.~5.4]{Cho26}. Subsection~\ref{subsec:rays} then classifies the special ample classes, Theorem~\ref{thm:burniat}(b); that classification is of independent interest and is where the eigensheaf technique is set up, but it is \emph{not} a step in the proof of~(a), and the reader interested only in the Ulrich consequences may skip it. Subsection~\ref{subsec:twisted} proves~(c). Attribution is recorded locally, at each statement which overlaps \cite{Cho26} or \cite{Ale16}.

\subsection{The surface, its lattice, and the Burniat test}\label{subsec:burniat}
We now carry out the computation which first revealed Corollary~\ref{cor:genus}, on the most symmetric surfaces of general type with $p_g=q=0$ that come equipped with a supply of natural curves: primary Burniat surfaces. We follow \cite{BC11,Pet77}.

Let $p_1,p_2,p_3\in\PP^2$ be non-collinear points and let $Y\to\PP^2$ be the blow-up at the three points, a del Pezzo surface of degree $6$, with $\Pic(Y)=\ZZ\langle e_0,e_1,e_2,e_3\rangle$ ($e_0$ the pullback of a line, $e_i$ the exceptional curves), $-K_Y=3e_0-e_1-e_2-e_3$. A \emph{primary Burniat surface} is a bidouble cover $\pi:X\to Y$, Galois with group $(\ZZ/2)^2$, branched over the strict transforms of the nine Burniat lines (the three sides of the triangle, in classes $e_0-e_i-e_j$, plus two further lines through each $p_i$, in classes $e_0-e_i$) together with the three exceptional curves $e_1,e_2,e_3$. The total branch class is
\[
\Delta\equiv\textstyle\sum_{i<j}(e_0-e_i-e_j)+\sum_i2(e_0-e_i)+\sum_ie_i=9e_0-3(e_1+e_2+e_3)=-3K_Y .
\]
$X$ is a minimal surface of general type with $p_g=q=0$, $K_X$ ample, $K_X^2=6$.

\smallskip
\noindent \emph{The lattice.} Each branch component $\Gamma\subset Y$ has reduced preimage $R(\Gamma)\subset X$ with $\pi^*\Gamma=2R(\Gamma)$, so $R(\Gamma)=\tfrac12\pi^*[\Gamma]$ in $\Num(X)_\mathbb{Q}$. Writing
\[
f_i:=\tfrac12\pi^*e_i=[R(E_i)],\qquad
h:=\tfrac12\pi^*e_0=[R(\text{line through }p_i)]+f_i ,
\]
both classes are \emph{integral} (differences and sums of curve classes). Since $\pi$ has degree $4$,
\[
\bigl(\tfrac12\pi^*a\bigr)\cdot\bigl(\tfrac12\pi^*b\bigr)=a\cdot b ,
\]
so $\ZZ\langle h,f_1,f_2,f_3\rangle$ is a unimodular lattice of signature $(1,3)$ inside $\Num(X)$. Now $c_2(X)=12\chi(\OO_X)-K_X^2=6$, hence $b_2(X)=4$ and (as $p_g=0$) $\rho(X)=4$. The ambient lattice $\Num(X)$ is itself unimodular: since $p_g=0$ every class of $H^2(X,\ZZ)$ is of type $(1,1)$, so the Lefschetz theorem on $(1,1)$-classes gives $\Num(X)=H^2(X,\ZZ)/\mathrm{tors}$, which is unimodular by Poincar\'e duality. A full-rank unimodular sublattice of a unimodular lattice has index one, hence is the whole lattice, so
\[
\Num(X)=\ZZ\langle h,f_1,f_2,f_3\rangle\cong\langle1\rangle\oplus\langle-1\rangle^{\oplus3}.
\]
Moreover $K_X=\pi^*K_Y+\sum R(\Gamma)=\pi^*K_Y+\tfrac12\pi^*(-3K_Y)=\tfrac12\pi^*(-K_Y)$, i.e.,
\[
K_X=3h-f_1-f_2-f_3,\qquad 2K_X=\pi^*(-K_Y),
\]
recovering the fact that the bicanonical map of $X$ is the Galois cover $\pi$ followed by the anticanonical embedding $Y\hookrightarrow\PP^6$.

\smallskip
\noindent \emph{The natural curves.} The reduced preimages of the twelve branch components give twelve irreducible curves on $X$, with invariants computed by adjunction on $X$:
\begin{center}
\begin{tabular}{lcccc}
\toprule
curve & class & $C^2$ & $K_X\cdot C$ & $p_a(C)$\\
\midrule
$\varepsilon_i=R(E_i)$ & $f_i$ & $-1$ & $1$ & $1$\\
$\sigma_{ij}=R(\overline{p_ip_j})$ & $h-f_i-f_j$ & $-1$ & $1$ & $1$\\
$\lambda_i^{(1)},\lambda_i^{(2)}=R(\text{line}\ni p_i)$ & $h-f_i$ & $0$ & $2$ & $2$\\
\bottomrule
\end{tabular}
\end{center}

\smallskip
\noindent \emph{The test.} Every curve in the table has $p_a\ge1$. By Corollary~\ref{cor:genus}, for \emph{every} ample $H$ on $X$ and every curve $C$ in the table satisfying the B\'ezout bound \eqref{eq:star-b} we have $h^0(\OO_X(H-C))\ge p_a(C)\ge1$: the residual system is never empty, and the points-on-a-curve mechanism for the adjoint kernel fails on all twelve natural curves --- as it must, by Theorem~\ref{thm:nogo}. (A hypothetical rational curve on $X$ would not help either: by Corollary~\ref{cor:genus} it could only certify polarizations that were already non-special.)

\begin{remark}\label{rem:burniat-positive}
Along the canonical ray Theorem~\ref{thm:main} applies at once: $K_X$ is ample with $K_X^2=6$, so $mK_X$ is very ample for $m\ge3$ by Reider's theorem (the exceptional configurations are excluded using $K_X\cdot D\ge1$, parity in the adjunction formula and $D^2K_X^2\le(K_X\cdot D)^2$), while $h^1(\OO_X(mK_X))=0$ for $m\ge2$ by Kawamata--Viehweg. Hence primary Burniat surfaces carry special Ulrich bundles for all pluricanonical polarizations, consistently with \cite{Cas22}; and since they carry no Ulrich line bundle at all \cite{Cho26}, these bundles are $\mu$-stable unconditionally by Corollary~\ref{cor:stable}.
\end{remark}

\begin{remark}[Cho's bundle]\label{rem:cho}
For $H=3K_X$ Cho \cite[Thm.~5.6]{Cho26} constructs a rank two Ulrich bundle $\mathcal{V}$ from an extension whose shape differs from \eqref{eq:shape}. The Chern classes do not separate it from the bundles of Theorem~\ref{thm:main}: his computation gives $\det\mathcal{V}=4K_X+2H=10K_X=3H+K_X$, so $\mathcal{V}$ is \emph{special}, and $c_2$ is then determined by \eqref{eq:c2special}. What does separate them is the invariant $h^0(\EE(-H-K_X))$, which is $\ge1$ for any bundle arising from \eqref{eq:shape} and is $0$ for $\mathcal{V}$; both that computation and the conclusion that $\mathcal V$ is none of the bundles of \cite{Cas17} are already in \cite[Rem.~5.7]{Cho26}. What we add is that the Chern classes do not separate the two constructions, so that the invariant above is what does the work, and that the comparison can be made with the whole family at once --- unconditionally, one bundle for each admissible $0$-cycle of length $\len(Z)=\chi(\OO_X(3K_X))+1=20$. Whether the two constructions land in the same irreducible component of the moduli space is raised as a question in \cite[Rem.~5.7]{Cho26}, and we do not know the answer.
\end{remark}

\subsection{Every ample and base point free divisor is non-special}\label{subsec:nonspecial}
We keep the notation of \S\ref{subsec:burniat}: $\pi:X\to Y$ is the bidouble cover of the del Pezzo surface $Y$ of degree $6$, $\Num(X)=\ZZ\langle h,f_1,f_2,f_3\rangle$, and $K_X=3h-f_1-f_2-f_3$. Only one geometric input is needed: an ample and base point free divisor on $X$ has degree at least two on each of the six elliptic ramification curves (Corollary~\ref{cor:degreetwo} below, proved from Lemma~\ref{lem:bpf-degree}; the same bound was obtained independently in \cite[Prop.~5.4]{Cho26}). Everything else is the description of $\NE(X)$, which we record first because it is used again in \S\ref{subsec:rays}.

The Mori cone of $X$ is completely determined by that of $Y$, by an elementary pushforward argument.

\begin{lemma}\label{lem:mori}
Let $\pi:X\to Y$ be a primary Burniat bidouble cover and let $\iota:\Num(Y)\to\Num(X)$, $a\mapsto\tfrac12\pi^*a$, be the map of \S\ref{subsec:burniat}; recall from there that $\iota$ is an isometric \emph{isomorphism}, since $\ZZ\langle h,f_1,f_2,f_3\rangle$ is a full-rank unimodular sublattice of the unimodular lattice $\Num(X)$. Then for every class $D\in\Num(X)$,
\[
\pi_*D\;=\;2\,\iota^{-1}(D).
\]
Consequently the Mori cone of $X$ is
\[
\NE(X)\;=\;\sum_{i}\RR_{\ge0}\,[\varepsilon_i]\;+\;\sum_{i<j}\RR_{\ge0}\,[\sigma_{ij}] ,
\]
the cone spanned by the six curves of the table in \S\ref{subsec:burniat} with $(C^2,K_X\cdot C)=(-1,1)$; the six curves $\lambda_i^{(k)}$ are redundant, since $\lambda_i\equiv\sigma_{ij}+\varepsilon_j$.
\end{lemma}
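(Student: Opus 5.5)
The plan has two parts: first establish the pushforward formula $\pi_*D=2\iota^{-1}(D)$ by a projection-formula computation, then use it to push the Mori cone of $X$ down to that of $Y$ and pull back.

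\emph{Pushforward formula.} For $a\in\Num(Y)$ and $D\in\Num(X)$ the projection formula gives $\pi_*D\cdot a=D\cdot\pi^*a=2\,D\cdot\iota(a)$. Since $\iota$ is an isometric isomorphism, $D\cdot\iota(a)=\iota^{-1}(D)\cdot a$. Hence $\pi_*D\cdot a=2\,\iota^{-1}(D)\cdot a$ for every $a$, and non-degeneracy of the intersection form on $\Num(Y)$ gives the formula.

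\emph{Mori cone.} The inclusion ``$\supseteq$'' is clear, since the $\varepsilon_i,\sigma_{ij}$ are curves. For ``$\subseteq$'', let $C\subset X$ be an irreducible curve. Then $\pi_*[C]$ is an effective class on $Y$, because $\pi_*C=\deg(C\to\pi(C))\,\pi(C)$ and $\pi$ is finite. So $\iota^{-1}[C]=\tfrac12\pi_*[C]$ lies in $\NE(Y)$. Since $Y$ is the degree-six del Pezzo surface, $\NE(Y)$ is spanned by its six $(-1)$-curves, namely $e_1,e_2,e_3$ and the strict transforms $e_0-e_i-e_j$. Applying the linear map $\iota$, which preserves the cone structure, shows that $[C]$ is a non-negative combination of $\iota(e_i)=f_i=[\varepsilon_i]$ and $\iota(e_0-e_i-e_j)=h-f_i-f_j=[\sigma_{ij}]$. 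Passing to closures is harmless because the cone is finitely generated, hence closed. Finally, the identity $\lambda_i\equiv\sigma_{ij}+\varepsilon_j$ is read off the table: $h-f_i=(h-f_i-f_j)+f_j$.

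\emph{Main obstacle.} The only real input is the standard description of $\NE(Y)$ for the degree-six del Pezzo surface via its hexagon of $(-1)$-curves, which I would cite; everything else is formal. A minor point needing care is that the pushforward of an irreducible curve is effective numerically even when $C$ is a ramification curve. This holds because $\pi_*$ always sends an irreducible curve to a positive multiple of its image curve.
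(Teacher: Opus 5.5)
Your proposal is correct and follows the paper's proof essentially step for step: the same projection-formula computation with non-degeneracy of the intersection form on $\Num(Y)$, then pushing an irreducible curve forward to the effective cone of the degree six del Pezzo surface, which is spanned by its six $(-1)$-curves, and transporting back by $\iota$. Your extra remarks also fill in two small points the paper leaves implicit: that the polyhedral cone is closed, so taking the closure is harmless, and the explicit check $h-f_i=(h-f_i-f_j)+f_j$.
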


\begin{proof}
For $a\in\Num(Y)$ the projection formula gives $\pi_*D\cdot a=D\cdot\pi^*a=\iota(\iota^{-1}D)\cdot 2\iota(a)=2\,\iota^{-1}(D)\cdot a$, since $\iota$ is an isometry; as the intersection form on $Y$ is non-degenerate, $\pi_*D=2\iota^{-1}(D)$.

Now let $C\subset X$ be an irreducible curve. Then $\pi_*C=\deg(C/\pi(C))\cdot[\pi(C)]$ is a positive multiple of an irreducible curve class, hence effective on $Y$; by the displayed identity $\iota^{-1}[C]=\tfrac12\pi_*C$ lies in $\Eff(Y)_\QQ$. Since $Y$ is a del Pezzo surface of degree $6$, its effective cone is spanned by its six $(-1)$-curves $e_1,e_2,e_3$ and $e_0-e_i-e_j$ --- equivalently, $Y$ is the toric surface of the hexagon and these six curves are its torus-invariant prime divisors --- and their images under $\iota$ are precisely $[\varepsilon_i]$ and $[\sigma_{ij}]$. Hence $[C]$ lies in the displayed cone, which proves ``$\subseteq$''; the reverse inclusion holds because the six classes are classes of actual curves.
\end{proof}

\begin{remark}\label{rem:mori-general}
The proof used no feature of the Burniat geometry beyond the finiteness of $\pi$ and the fact that $\pi^*$ identifies the two N\'eron--Severi groups rationally: \emph{if $\pi:X\to Y$ is a finite surjective morphism of smooth projective surfaces with $\rho(X)=\rho(Y)$, then $\NE(X)_\QQ=\pi^*\NE(Y)_\QQ$}, by the same two lines. Here and below curve classes are identified with divisor classes through the intersection pairing, which is non-degenerate on $\Num$; ``$\pi^*$ of a curve class'' means the pullback of the corresponding divisor class under that identification, and it is in this sense that the displayed equality is asserted. Pushing forward, unlike pulling back, is insensitive to the sign of $C^2$, and this is what excludes a priori ``exotic'' negative curves on $X$ outside the cone spanned by the six ramification classes; the rational statement alone does not, however, control \emph{integral} intersection numbers upstairs, see Remark~\ref{rem:integrality}. The identification of the two effective cones is not new: it is the starting observation of Alexeev \cite[\S4]{Ale16}, who determines the semigroup of effective \emph{integral} divisors together with the nef cone; see also \cite{AO13}. The immediate consequence that a divisor of non-negative degree on the six curves is nef, and ample if the degrees are positive, is \cite[Prop.~4.2]{Cho26}.
\end{remark}

\begin{lemma}\label{lem:bpf-degree}
Let $D$ be a base point free divisor on a smooth projective surface $S$ and let $C\subset S$ be a smooth irreducible curve of genus $g(C)\ge1$. Then $D\cdot C\neq1$.
\end{lemma}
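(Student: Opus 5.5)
We argue by contradiction: suppose $D\cdot C=1$ and restrict the linear system $|D|$ to $C$. Since $D$ is base point free, the restriction $L:=\OO_S(D)|_C$ is a line bundle of degree $D\cdot C=1$ on $C$. It is generated by the images of the sections in $H^0(\OO_S(D))$, so $L$ is base point free on $C$.

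Next we rule out the possibility that all these restricted sections are proportional. If the image $V\subseteq H^0(C,L)$ of $H^0(\OO_S(D))$ had dimension $\le1$, then, since $V$ generates $L$, $V$ would be spanned by a single nowhere-vanishing section. That would force $L\cong\OO_C$, contradicting $\deg L=1$. Hence $\dim V\ge2$, so $h^0(C,L)\ge2$ and $L$ is a base point free $g^1_1$ on $C$.

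A base point free pencil of degree one defines a morphism $C\to\PP^1$ of degree $\deg L=1$. Since $C$ is smooth, this morphism is an isomorphism, so $g(C)=0$, contradicting $g(C)\ge1$. Alternatively, by Riemann--Roch on a curve of genus $g\ge1$ a degree-one line bundle has $h^0\le1$: if $h^0(L)=2$ then $L=\OO_C(p)$ with $h^0(\OO_C(p))=2$, which forces $C\cong\PP^1$. We also note that $D\cdot C=0$ is allowed, for instance when $C$ is contracted by $\phi_{|D|}$; the lemma only excludes the value one.

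The only delicate step is showing that the restricted system has at least two independent sections, handled above by the degree argument. Smoothness of $C$ is used in the final step: on a singular curve of positive arithmetic genus, for example a rational nodal curve, a degree-one map to $\PP^1$ can exist.
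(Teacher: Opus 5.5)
Your argument is correct and is essentially the paper's proof in contrapositive order. The paper first shows $h^0(C,\OO_C(D))\le1$, since a degree-one pencil would give a degree-one morphism $C\to\PP^1$, hence an isomorphism, and then places the base point at the zero of the single restricted section; you instead use global generation of $\OO_C(D)$ to force $\dim V\ge2$ and reach the same isomorphism $C\cong\PP^1$.

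One correction to your closing aside: a degree-one morphism from an integral curve to $\PP^1$ is finite and birational onto a normal curve, hence an isomorphism. So the rational nodal curve is no counterexample, and it does not show that smoothness of $C$ is needed.
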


\begin{proof}
Suppose $D\cdot C=1$ and let $V\subseteq H^0(C,\OO_C(D))$ be the image of the restriction map. Since $\OO_C(D)$ has degree one and $C$ is not rational, $h^0(C,\OO_C(D))\le1$: a pencil in $|\OO_C(D)|$ would have a base point free moving part, necessarily of degree one since a base point free pencil has positive degree, so the fixed part would be empty and the pencil would define a degree one morphism $C\to\PP^1$, that is, an isomorphism. On the other hand $V\neq0$, since $C$ is not contained in the base locus of $|D|$. Hence $\dim V=1$, so every section of $H^0(S,\OO_S(D))$ restricts on $C$ to a multiple of one section $s$, and all of them vanish at the single point of $\operatorname{div}(s)$: that point is a base point of $|D|$.
\end{proof}

\begin{lemma}\label{lem:six-elliptic}
Each of the six curves $\varepsilon_1,\varepsilon_2,\varepsilon_3,\sigma_{12},\sigma_{13},\sigma_{23}$ spanning $\NE(X)$ is smooth of genus one.
\end{lemma}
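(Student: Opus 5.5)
The plan is to get the arithmetic genus from the table in \S\ref{subsec:burniat}, where $p_a=1$ for all six curves, and to obtain smoothness from the local structure of the bidouble cover over the branch locus. Irreducibility is already recorded in the table. Hence for each of the six curves $C$ it suffices to show that $C$ is smooth; then $g(C)=p_a(C)=1$. As a check on the table, adjunction gives $2p_a(C)-2=C^2+K_X\cdot C=-1+1=0$ for $\varepsilon_i\equiv f_i$ and $\sigma_{ij}\equiv h-f_i-f_j$. These intersection numbers follow from $(\tfrac12\pi^*a)\cdot(\tfrac12\pi^*b)=a\cdot b$ and $K_X=3h-f_1-f_2-f_3$. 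For instance $K_X\cdot f_i=1$ and $f_i^2=-1$; likewise $(h-f_i-f_j)^2=1-1-1=-1$ and $K_X\cdot(h-f_i-f_j)=3-1-1=1$.

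For smoothness, the branch components are smooth curves in $Y$. The exceptional curves $E_i$ and the strict transforms of the lines are all isomorphic to $\PP^1$. Since $X$ is smooth, the standard local description of a smooth bidouble cover applies (as in \cite{BC11,Pet77}): over a general point of a branch component $\Gamma$ the cover is locally $\{w^2=x\}\times\{$étale double$\}$, and the reduced preimage $R(\Gamma)$ maps to $\Gamma$ by a double cover. At points of $\Gamma$ that lie on a second branch component $\Gamma'$ belonging to a different summand of the branch divisor, the cover is locally $(u,v)\mapsto(u^2,v^2)$ and $R(\Gamma)=\{u=0\}$, which is smooth. Because $X$ is smooth, no triple points and no tangencies occur in the relevant summands. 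Hence $R(\Gamma)$ is a smooth curve, and $R(\Gamma)\to\Gamma\cong\PP^1$ is a double cover branched exactly at the points where $\Gamma$ meets branch components of the other two summands.

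As an independent check I would apply Riemann--Hurwitz. Each $E_i$ meets the two further lines through $p_i$ and the two sides $\overline{p_ip_j}$, $\overline{p_ip_k}$, which is four points in total. Each side $\overline{p_ip_j}$ meets $E_i$, $E_j$ and the two lines through $p_k$, again four points. Consequently each of the six curves is a double cover of $\PP^1$ branched at four points, so it has genus one. This requires that these four intersection points lie on components in summands different from that of $\Gamma$, which is exactly the Burniat labeling of the three summands $D_1,D_2,D_3$.

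The main obstacle I expect is this labeling check, namely that the four curves meeting $\Gamma$ lie in other summands. In the standard Burniat configuration the summand $D_i$ consists of the side opposite $p_i$ (more precisely, the side with index convention as in \cite{BC11}), the two lines through $p_i$, and the exceptional curve $E_{i+1}$. Rather than redoing that check, I would fall back on the cheaper argument that $p_a=1$ together with irreducibility and smoothness gives the result, and that smoothness of $R(\Gamma)$ in a smooth bidouble cover with normal-crossing branch locus is standard. The irreducibility is already asserted in the table; alternatively, if $R(\Gamma)$ were reducible it would split its class $\tfrac12\pi^*\Gamma$, and one checks this against $\Num(X)$.
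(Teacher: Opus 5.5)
Your proof is correct, and it reverses the roles of the two ingredients in the paper's argument. The paper gets the genus by Riemann--Hurwitz. The curve $\Gamma$ lies in exactly one summand, so $R(\Gamma)\to\Gamma\cong\PP^1$ is a double cover branched exactly where $\Gamma$ meets the other two summands. These points are counted via $E_3\cdot\Delta_2=1$, $E_3\cdot\Delta_3=3$ (and likewise for $\widetilde{s_1}$), with $p_a=1$ from the table serving only as a check. You instead take $p_a=1$ from adjunction in $\Num(X)$ and get smoothness from the local model of the cover. This avoids all bookkeeping of which curve lies in which summand. The paper's route buys something in return: it locates the branch points, so it describes the elliptic curve and not just its genus. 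You also need not import irreducibility from the table. A smooth curve of degree two over $\PP^1$ is either connected or two disjoint copies of $\PP^1$, and the latter has $p_a=-1\neq1$.

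The labelling you single out as the main obstacle is a non-issue, but your description of it is wrong. The side opposite $p_i$ meets both extra lines through $p_i$ on $Y$, since $(e_0-e_j-e_k)\cdot(e_0-e_i)=1$ for $\{i,j,k\}=\{1,2,3\}$, so it cannot share their summand. The paper has $\Delta_i=\widetilde{s_i}+\widetilde{m_i^{(1)}}+\widetilde{m_i^{(2)}}+E_{i+2}$, with $s_i$ the side through $p_i$ and $p_{i+1}$.

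In fact no labelling is needed. The reduced preimage of a summand is the curve part of the fixed locus of an involution of the smooth surface $X$, hence smooth. So two components of one summand never meet, since otherwise their preimages would share the fibre over a common point. Every branch curve meeting $\Gamma$ therefore lies in another summand. With that, your four-point Riemann--Hurwitz count is complete, and it is exactly the paper's proof. The same fixed-locus remark also gives smoothness of $R(\Gamma)$ without local models.
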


\begin{proof}
Each is the reduced ramification curve $R(\Gamma)$ over one of the six $(-1)$-curves of $Y$: $\Gamma=E_i$ for $\varepsilon_i$, and $\Gamma=\widetilde{s_k}$ for $\sigma_{ij}$ with $\{i,j,k\}=\{1,2,3\}$. Such a $\Gamma$ is a component of exactly one of the three branch divisors, say of $\Delta_\chi$, so that $R(\Gamma)\to\Gamma\cong\PP^1$ is a double cover branched exactly at the points where $\Gamma$ meets the other two branch divisors; these points are distinct, the branch configuration being nodal and $X$ smooth. Their number is four in each case: for $\Gamma=E_3\subset\Delta_1$ one has $E_3\cdot\Delta_2=1$ and $E_3\cdot\Delta_3=3$, while for $\Gamma=\widetilde{s_1}\subset\Delta_1$, of class $e_0-e_1-e_2$, one has $\widetilde{s_1}\cdot\Delta_2=1$ and $\widetilde{s_1}\cdot\Delta_3=3$; the remaining four cases follow by the symmetry of the configuration permuting $p_1,p_2,p_3$. A double cover of $\PP^1$ branched at four distinct points is smooth of genus one, in accordance with $p_a=1$ in the table of \S\ref{subsec:burniat}.
\end{proof}

\begin{corollary}\label{cor:degreetwo}
Let $H$ be an ample and base point free divisor on a primary Burniat surface $X$. Then $H\cdot C\ge2$ for each of the six curves $C$ of Lemma~\ref{lem:six-elliptic}; consequently $H\cdot K_X\ge12$.
\end{corollary}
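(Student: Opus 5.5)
The first claim follows from Lemma~\ref{lem:bpf-degree} together with Lemma~\ref{lem:six-elliptic}. Let $C$ be one of the six curves $\varepsilon_1,\varepsilon_2,\varepsilon_3,\sigma_{12},\sigma_{13},\sigma_{23}$. Since $H$ is ample and $C$ is an irreducible curve, $H\cdot C\ge1$. By Lemma~\ref{lem:six-elliptic}, $C$ is smooth of genus one. Since $H$ is base point free, Lemma~\ref{lem:bpf-degree} gives $H\cdot C\neq1$. Hence $H\cdot C\ge2$.

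For the bound on $H\cdot K_X$, I would write $K_X$ as a non-negative combination of the six curve classes. The natural candidate is
\[
\varepsilon_1+\varepsilon_2+\varepsilon_3+\sigma_{12}+\sigma_{13}+\sigma_{23}\equiv(f_1+f_2+f_3)+\bigl(3h-2(f_1+f_2+f_3)\bigr)=3h-f_1-f_2-f_3 ,
\]
which is $K_X$ by the formula in \S\ref{subsec:burniat}. Equivalently, $-K_Y$ on the del Pezzo surface of degree six is the hexagon of its six $(-1)$-curves, and $2K_X=\pi^*(-K_Y)$ pulls this back to twice the sum of the six reduced ramification curves.

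Pairing with $H$ then gives
\[
H\cdot K_X=\sum_{C}H\cdot C\ge 6\cdot2=12 .
\]
Numerical equivalence suffices here, since intersection numbers depend only on classes in $\Num(X)$.

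I expect no step to be a real obstacle. The only point needing care is the linear relation, which is a direct check in the basis $h,f_1,f_2,f_3$ using the classes listed in the table of \S\ref{subsec:burniat}.
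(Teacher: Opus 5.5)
Your proposal is correct and follows the paper's proof: ampleness gives $H\cdot C\ge1$, and Lemma~\ref{lem:bpf-degree} with Lemma~\ref{lem:six-elliptic} excludes $H\cdot C=1$. Summing the six classes to $K_X=3h-f_1-f_2-f_3$ in $\Num(X)$ then yields $H\cdot K_X\ge12$, exactly as you wrote.
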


\begin{proof}
$H$ is ample and $C$ is a curve, so $H\cdot C\ge1$; and $H\cdot C\neq1$ by Lemma~\ref{lem:bpf-degree}, $C$ being smooth of genus one by Lemma~\ref{lem:six-elliptic}. For the second assertion, the six classes add up to the canonical class in $\Num(X)$,
\[
\sum_{i}[\varepsilon_i]+\sum_{i<j}[\sigma_{ij}]
\;=\;\sum_i f_i+\Bigl(3h-2\sum_i f_i\Bigr)
\;=\;3h-f_1-f_2-f_3\;=\;K_X ,
\]
so that $H\cdot K_X=\sum_C H\cdot C\ge12$.
\end{proof}

\begin{remark}[Relation with \cite{Cho26}]\label{rem:cho-degreetwo}
Corollary~\ref{cor:degreetwo} is \cite[Prop.~5.4 and its proof]{Cho26}, obtained there independently, for the same six curves and with the same bound $H\cdot K_X\ge12$; the statement of that proposition names three of the six degrees and the other three appear in its proof. The mechanism is the same one read through Alexeev's symmetric coordinates: the residual point of a degree one intersection with an elliptic ramification curve is forced to be a prescribed $2$-torsion point, hence a base point. Cho establishes the bound in order to exclude Ulrich \emph{line} bundles \cite[Thm.~5.5]{Cho26}. What follows in this subsection is not in \cite{Cho26}.
\end{remark}

\begin{corollary}\label{cor:burniat-nonspecial}
Let $X$ be a primary Burniat surface. Then \emph{every} ample and base point free divisor on $X$ is non-special: $h^1(\OO_X(H))=0$. In particular this holds for every polarization $H$, and consequently:
\begin{enumerate}[label=\textup{(\alph*)}]
\item $(X,H)$ carries a special Ulrich bundle of rank two for every polarization $H$;
\item every such bundle is $\mu_H$-stable, $X$ carrying no Ulrich line bundle \cite[Thm.~5.5]{Cho26} --- a statement proved there, as is the first assertion here, for every ample and base point free divisor;
\item $X$ is strictly Ulrich wild with respect to \emph{every} polarization.
\end{enumerate}
\end{corollary}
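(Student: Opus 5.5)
The plan is to show that $H-K_X$ is ample and then conclude by Kodaira vanishing. Let $H$ be ample and base point free. By Corollary~\ref{cor:degreetwo}, $H\cdot C\ge2$ for each of the six curves $C\in\{\varepsilon_1,\varepsilon_2,\varepsilon_3,\sigma_{12},\sigma_{13},\sigma_{23}\}$. The table in \S\ref{subsec:burniat} gives $K_X\cdot C=1$ for each of them. Hence $(H-K_X)\cdot C\ge1>0$ for all six curves. By Lemma~\ref{lem:mori} these six classes span $\NE(X)$, which is a rational polyhedral cone generated by finitely many curves. So $H-K_X$ is strictly positive on $\NE(X)\smallsetminus\{0\}$, and Kleiman's criterion makes $H-K_X$ ample; this is also the consequence recorded as \cite[Prop.~4.2]{Cho26} in Remark~\ref{rem:mori-general}. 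Writing $H=K_X+(H-K_X)$, Kodaira vanishing gives $h^1(\OO_X(H))=0$. Every polarization is very ample, hence ample and base point free, so the first assertion covers all polarizations.

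For the consequences I would argue as follows.

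Part (a): $X$ has $p_g=q=0$ and every polarization $H$ is very ample and, by the above, non-special. So Theorem~\ref{thm:main} applies directly and produces a special Ulrich bundle of rank two.

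Part (b): By Lemma~\ref{lem:semistable} any Ulrich bundle is $\mu_H$-semistable. If it failed to be stable, it would be an extension of Ulrich line bundles. But \cite[Thm.~5.5]{Cho26} excludes Ulrich line bundles on $X$ for every ample base point free divisor. Therefore every rank two Ulrich bundle is $\mu_H$-stable, by the argument of Corollary~\ref{cor:stable}.

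Part (c): I would apply Theorem~\ref{thm:evenrank}(b). $X$ is minimal of general type, and its hypothesis $H^2+4-K_X^2\ge3$ reads $H^2\ge5$, since $K_X^2=6$. So I need a lower bound on $H^2$ for very ample $H$. Write $H\equiv ah-\sum b_if_i$ in $\Num(X)=\ZZ\langle h,f_1,f_2,f_3\rangle$. The six conditions $H\cdot C\ge2$ become
\[
b_i\ge2,\qquad a-b_i-b_j\ge2 \quad (i<j).
\]
Summing the three inequalities of the second kind gives $3a\ge2\sum b_i+6$. The aim is to minimize $a^2-\sum b_i^2$ over these integer points, where I expect the minimum to be $18$, attained at $a=6$, $b_i=2$. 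This is the bound $H^2\ge18$ quoted in Theorem~\ref{thm:burniat}(a), and it is far above $5$. The step I expect to be the main obstacle is proving this minimization rigorously. First I would observe that the constraint region is the translate of the nef cone of $Y$ by the vertex $(6;2,2,2)$, i.e. $H-2K_X$ is nef, so that $H=2K_X+N$ with $N$ nef. Then
\[
H^2=24+4K_X\cdot N+N^2\ge24.
\]
This is more than enough, and it makes the precise value $18$ irrelevant for the argument.
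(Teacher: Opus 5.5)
Your proof is correct. For the main assertion and for parts (a) and (b) it is the paper's argument: Corollary~\ref{cor:degreetwo} against $K_X\cdot C=1$, Lemma~\ref{lem:mori} and Kleiman's criterion to make $H-K_X$ ample, then Kodaira vanishing, Theorem~\ref{thm:main}, Lemma~\ref{lem:semistable} and \cite[Thm.~5.5]{Cho26}.

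You diverge only in the bound on $H^2$ needed for (c). The paper uses very ampleness. Since $X$ is a non-degenerate surface other than $\PP^2$, one has $h^0(\OO_X(H))\ge4$. Riemann--Roch with $h^1=h^2=0$ then gives $H^2-H\cdot K_X\ge6$, and adding $H\cdot K_X\ge12$ yields $H^2\ge18$.

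You instead note that $(H-2K_X)\cdot C=H\cdot C-2\ge0$ on the six generators of $\NE(X)$. So $N=H-2K_X$ is nef, and $H^2=24+4K_X\cdot N+N^2\ge24$. Your route needs neither cohomology nor very ampleness, and it holds for every ample and base point free $H$. It is also sharp in that class: $2K_X=\pi^*(-K_Y)$ is ample and base point free with $(2K_X)^2=24$. The paper's route gives the weaker $18$, using only the counting already in hand.

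One slip to correct: your ``expected minimum $18$, attained at $a=6$, $b_i=2$'' is wrong. That point gives $a^2-\sum b_i^2=36-12=24$. The value $18$ is the paper's non-sharp bound, not the minimum of your optimization, and your nef-decomposition argument shows the minimum is $24$. The slip does not affect the logic, since all you need is $H^2\ge5$.
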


\begin{proof}
Let $H$ be ample and base point free. By Corollary~\ref{cor:degreetwo}, $H\cdot C\ge2$ for each of the six curves, whereas $K_X\cdot C=1$ on each of them by the table of \S\ref{subsec:burniat}. Hence
\[
(H-K_X)\cdot C\;=\;H\cdot C-K_X\cdot C\;\ge\;1\;>\;0
\]
on all six. By Lemma~\ref{lem:mori} those six classes span $\NE(X)$, which is therefore polyhedral and closed, so Kleiman's criterion makes $H-K_X$ ample. Applying Kodaira vanishing to $H=K_X+(H-K_X)$ gives $h^1(\OO_X(H))=h^2(\OO_X(H))=0$. Every very ample divisor is base point free, so in particular every polarization is non-special.

Assertion (a) is then Theorem~\ref{thm:main}, which requires $H$ very ample; (b) follows from Corollary~\ref{cor:stable} together with \cite[Thm.~5.5]{Cho26}; and (c) is Theorem~\ref{thm:evenrank}(b), whose hypothesis $H^2+4-K_X^2\ge3$ reads $H^2\ge5$ since $K_X^2=6$ --- a condition which is in fact automatic for a polarization. Indeed $H$ very ample forces $h^0(\OO_X(H))\ge4$, since $|H|$ embeds $X$ as a non-degenerate surface in $\PP^{h^0-1}$ and a surface spanning $\PP^2$ is $\PP^2$, which $X$ is not; while $h^1=0$ by the first assertion and $h^2(\OO_X(H))=h^0(\OO_X(K_X-H))=0$. Hence $1+\tfrac12H\cdot(H-K_X)\ge4$, that is, $H^2-H\cdot K_X\ge6$, and Corollary~\ref{cor:degreetwo} gives $H\cdot K_X\ge12$; therefore
\[
H^2\;\ge\;H\cdot K_X+6\;\ge\;18 .
\]
\end{proof}

\begin{remark}[What is borrowed and what is not]\label{rem:cho-nonspecial}
Everything above is three lines of geometry on top of Corollary~\ref{cor:degreetwo}, which is in turn three lines on top of Lemma~\ref{lem:bpf-degree}. The step that is not in \cite{Cho26} is the one taken here: that the degree bound forces $H-K_X$ to be ample and hence makes \emph{every} polarization non-special, non-speciality being used there only for the single class $H=3K_X$, in order to invoke \cite{Cas17}. It is the quantification over all polarizations that turns Theorem~\ref{thm:main} into an unconditional existence statement on $X$ and, through Theorem~\ref{thm:evenrank}, into unconditional strict Ulrich wildness. The classification of \S\ref{subsec:rays} gives a second and independent route to the same conclusion, which is how we first found it; we have demoted it because the route above is shorter.

Three further points, for the record. First, the introduction of \cite{Cho26} does assert that \cite{Cas17} applies to the primary Burniat surface; the assertion is made there for no particular polarization, without proof, and without the verification of $h^1(\OO_X(H))=0$ on which \cite{Cas17} rests --- and it is exactly that verification, for every polarization at once, that Corollary~\ref{cor:burniat-nonspecial} supplies. Second, the same conclusion can be reached from published results alone: \cite[Prop.~5.4]{Cho26} for the degree bound, \cite[Prop.~4.2]{Cho26} for the ampleness criterion, and Kodaira vanishing. Our route replaces the first by Lemma~\ref{lem:bpf-degree} and the second by Lemma~\ref{lem:mori}, which is why the argument given here is self-contained; the conclusion is the same either way. Third, assertion~(b) is already contained in \cite[Thm.~1.2]{Cas17} for a general choice of the $0$-cycle, a primary Burniat surface being neither a rational scroll nor $\PP^2$; the increment here is that stability holds for \emph{every} admissible cycle, which is what the absence of Ulrich line bundles buys.
\end{remark}

\begin{remark}\label{rem:wild-burniat}
On a primary Burniat surface no Ulrich line bundle exists for any polarization \cite[Thm.~5.5]{Cho26}, so in Step~1 of the proof of Theorem~\ref{thm:evenrank} only the stable branch occurs. Moreover $K_X^2=6$, so $w=H^2-2$, while $H^2\ge18$ for every polarization by the proof of Corollary~\ref{cor:burniat-nonspecial}. Hence $w\ge16$ for \emph{every} polarization, and Theorem~\ref{thm:evenrank} applies unconditionally.
\end{remark}

\subsection{Where the special ample classes live}\label{subsec:rays}
This subsection is a classification, not a step in the proof of Corollary~\ref{cor:burniat-nonspecial}: we confine the numerical classes of the special ample divisors on $X$ to three rays, show that each ray does carry special classes, and compute their cohomology. The confinement is sharp as a necessary condition but is not a characterization --- the rays also carry non-special classes, as Remark~\ref{subsubsec:cho-rays} records. It proves Theorem~\ref{thm:burniat}(b), it gives a second and independent route to Corollary~\ref{cor:burniat-nonspecial}, and it sets up the eigensheaf technique used in \S\ref{subsec:twisted}. Overlaps with \cite{Cho26} are recorded at each statement.

\subsubsection{Reduction to three rays}
\begin{lemma}\label{lem:dichotomy}
Let $H$ be an ample divisor on $X$ with $h^1(\OO_X(H))\neq0$. Then $H-K_X$ is nef with $(H-K_X)^2=0$, and it lies on one of the three rays $\RR_{\ge0}(h-f_i)$. Equivalently,
\[
H\;\equiv\;K_X+c\,(h-f_i)\qquad\text{for some }i\in\{1,2,3\}\text{ and some integer }c\ge0 .
\]
Conversely every such class is ample, $K_X$ being ample and $h-f_i=\tfrac12\pi^*(e_0-e_i)$ nef.
\end{lemma}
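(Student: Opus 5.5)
The plan is to work with $D:=H-K_X$ in $\Num(X)=\ZZ\langle h,f_1,f_2,f_3\rangle$, show first that $D$ is nef, then use Kawamata--Viehweg to force $D^2=0$, and finally read off the three rays from the nef cone of $Y$ via the isometry $\iota$ of Lemma~\ref{lem:mori}.

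\emph{Step 1: nefness.} By Lemma~\ref{lem:mori}, $\NE(X)$ is spanned by the six classes $[\varepsilon_i]$ and $[\sigma_{ij}]$. By the table of \S\ref{subsec:burniat}, each of them satisfies $K_X\cdot C=1$. Since $H$ is ample and $C$ is an irreducible curve, $H\cdot C\ge1$, so
\[
D\cdot C=H\cdot C-1\ge0
\]
on all six generators. Hence $D$ is nef. This step uses no hypothesis on $h^1$.

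\emph{Step 2: $D^2=0$.} If $D^2>0$, then $D$ is nef and big, and Kawamata--Viehweg applied to $H=K_X+D$ gives $h^1(\OO_X(H))=0$, contrary to the hypothesis. Since $D$ is nef, $D^2\ge0$, and therefore $D^2=0$.

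\emph{Step 3: the rays.} Write $D=\iota(a)$ with $a\in\Num(Y)$; this is possible because $\iota$ is an isometric isomorphism. Because $\iota$ identifies the effective cones (Lemma~\ref{lem:mori}) and preserves the intersection form, $a$ is nef on $Y$ with $a^2=0$.

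It remains to find the nef classes of square zero on the degree six del Pezzo surface. The nef cone is dual to the cone spanned by the six $(-1)$-curves $e_i$ and $e_0-e_i-e_j$, and its extremal rays are
\[
e_0-e_1,\quad e_0-e_2,\quad e_0-e_3,\quad e_0,\quad 2e_0-e_1-e_2-e_3 .
\]
This is the dual of the hexagon; I would verify it by checking that each of these classes is orthogonal to two adjacent $(-1)$-curves.

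Among these generators, only the three conic classes $e_0-e_i$ have square zero. Any two distinct generators have strictly positive product, as one checks directly: $(e_0-e_i)\cdot(e_0-e_j)=1$, $e_0\cdot(e_0-e_i)=1$, $(2e_0-\sum_k e_k)\cdot(e_0-e_i)=1$, and so on. Now write a nef class as a non-negative combination $a=\sum_k\lambda_k g_k$ of the generators. Then $a^2\ge2\lambda_k\lambda_l\,g_k\cdot g_l$ for every pair $k\ne l$, so $a^2=0$ forces at most one $\lambda_k$ to be non-zero, and that generator must itself have square zero. Hence $a=c\,(e_0-e_i)$ with $c\ge0$, that is, $D=c\,(h-f_i)$.

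\emph{Integrality and the converse.} Since $h-f_i$ is primitive in the lattice $\ZZ\langle h,f_1,f_2,f_3\rangle$ and $D$ is integral, $c$ is an integer. For the converse, $K_X$ is ample and $h-f_i=\iota(e_0-e_i)$ is nef by Lemma~\ref{lem:mori} and its proof, so $K_X+c(h-f_i)$ is ample.

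I expect the main obstacle to be justifying the description of the nef cone of $Y$ and the claim that its only square-zero rays are the conic classes. I would settle this by the explicit duality computation against the hexagon of $(-1)$-curves, together with the pairwise-positivity argument above.
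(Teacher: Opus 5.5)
Your proof is correct. Steps~1 and~2 (nefness of $D=H-K_X$ from the six generators of $\NE(X)$, then Kawamata--Viehweg to force $D^2=0$) coincide with the paper's. The two routes part only in identifying the nef square-zero classes on $Y$.

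The paper argues conceptually. For $N=\iota^{-1}(D)\neq0$ it shows $h^0(\OO_Y(N))\ge1$ and makes $N$ semiample by the basepoint-free theorem, since $N-K_Y$ is nef and big. It then writes $N=cF$ with $F$ a fibre of the Stein factorisation onto $\PP^1$. From $p_a(F)=1-\tfrac12(-K_Y)\cdot F\ge0$ and parity it gets $(-K_Y)\cdot F=2$, and finishes with the list of conic classes on the sextic del Pezzo. That argument works for nef square-zero classes on any del Pezzo surface, at the price of heavier machinery and a separate enumeration at the end.

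Your argument is pure lattice arithmetic, specific to degree six. Once the nef cone is known to be spanned by the five listed classes, pairwise positivity of their products closes the case in one line, and the vertex $a=0$ is handled uniformly.

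The one point to repair is your proposed verification of that cone. The criterion ``orthogonal to two adjacent $(-1)$-curves'' is true of $e_0-e_i$, which is orthogonal to four curves forming two adjacent pairs. It is false for $e_0$ and $2e_0-e_1-e_2-e_3$, each of which is orthogonal to exactly three pairwise disjoint $(-1)$-curves. More importantly, orthogonality checks only show that the listed rays are extremal, not that the list is complete, and completeness is what your decomposition $a=\sum\lambda_kg_k$ needs. The list is in fact complete: the nef cone is the cone over a triangular bipyramid with apices $e_0$ and $2e_0-e_1-e_2-e_3$.

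You can avoid the cone altogether. Write $a=xe_0-\sum y_ie_i$. The six inequalities of Step~1 read $y_i\ge0$ and $x\ge y_i+y_j$. After reordering so that $y_1\ge y_2\ge y_3$,
\[
x^2\;\ge\;(y_1+y_2)^2\;=\;y_1^2+y_2^2+2y_1y_2\;\ge\;y_1^2+y_2^2+y_3^2 .
\]
So $a^2=0$ forces equality throughout. This gives $x=y_1+y_2$ and $2y_1y_2=y_3^2\le y_2^2\le y_1y_2$, whence $y_1y_2=0=y_3$. Hence $y_2=y_3=0$ and $a=y_1(e_0-e_1)$. This uses only the isometry $\iota$ and the inequalities you already have.
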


\begin{proof}
By Lemma~\ref{lem:mori} it suffices to test $H-K_X$ on the six classes $[\varepsilon_i],[\sigma_{ij}]$, all of which satisfy $K_X\cdot C=1$; ampleness of $H$ gives $(H-K_X)\cdot C\ge0$ on each, so $H-K_X$ is nef. It is not big: otherwise Kawamata--Viehweg would give $h^1(\OO_X(K_X+(H-K_X)))=0$. A nef class is big exactly when its square is positive, so $(H-K_X)^2=0$.

It remains to identify the non-zero nef classes of square zero, the case $H\equiv K_X$ being the vertex $c=0$. Dualizing $\NE(X)=\iota(\NE(Y))$ and using that $\iota$ is an isometry gives $\mathrm{Nef}(X)=\iota(\mathrm{Nef}(Y))$, so $N:=\iota^{-1}(H-K_X)$ is a non-zero nef class of square zero on the del Pezzo surface $Y$. Such an $N$ is a positive multiple of a conic class. Indeed $K_Y-N$ is not effective, $-K_Y$ being nef and $(K_Y-N)\cdot(-K_Y)\le-K_Y^2<0$, so $h^2(\OO_Y(N))=0$ and $h^0(\OO_Y(N))\ge\chi(\OO_Y(N))=1+\tfrac12(-K_Y)\cdot N\ge1$; moreover $(-K_Y)\cdot N>0$, since $N$ is effective and non-zero and $-K_Y$ is ample on the del Pezzo surface $Y$. Then $N-K_Y$ is nef and big, so the basepoint-free theorem makes $N$ semiample; the Stein factorisation of $|mN|$, $m\gg0$, maps $Y$ onto a curve, necessarily $\PP^1$, and $N=cF$ with $F$ the class of a fibre. Connectedness of the fibres gives $p_a(F)=1-\tfrac12(-K_Y)\cdot F\ge0$, and the same identity forces $(-K_Y)\cdot F$ even, hence $(-K_Y)\cdot F=2$: $F$ is a conic class. On the degree six del Pezzo the conic classes are exactly $e_0-e_1,e_0-e_2,e_0-e_3$, the three fibre classes of the hexagon. Finally $h-f_i=\iota(e_0-e_i)$ is primitive in $\Num(X)$ and $H-K_X$ is integral on its ray, so $c\in\ZZ_{\ge0}$.
\end{proof}

\begin{remark}[Comparison with Cho]\label{rem:dichotomy-cho}
Neither half of the lemma is new, and we claim novelty only for the range of the second. The passage from $h^1(\OO_X(H))\neq0$ to $(H-K_X)^2=0$ also follows in two lines from \cite{Cho26}: an ample $H$ has $(H-K_X)\cdot Z_i\ge0$ on the six ramification curves, so $H-K_X$ is nef by \cite[Prop.~4.2]{Cho26} and $(H-K_X)^2\ge0$, while $(H-K_X)^2>0$ would give $h^1(\OO_X(H))=0$ by the vanishing step of \cite[Algorithm~4.18(3.1)]{Cho26}. The identification of the ample classes with $(H-K_X)^2=0$ as the three $K_X$-translated rays is \cite[Prop.~4.10]{Cho26}, proved there with the effective semigroup instead of by descending to $Y$ --- but under the additional hypothesis $e([D])=64$, that all $64$ torsion twists in the numerical class be effective. Lemma~\ref{lem:dichotomy} carries no such hypothesis and therefore also covers the vertex $c=0$, where $e([K_X])=63$: indeed $h^0(\OO_X(K_X))=p_g=0$, while $h^0(\OO_X(K_X+\tau))\ge1$ for each of the $63$ non-trivial torsion classes \cite[\S4.2]{Cho26}. By Remark~\ref{rem:vertex} the vertex is exactly where the cheapest special ample classes live, so the difference is not idle.
\end{remark}

\begin{remark}[The vertex is not empty]\label{rem:vertex}
Lemma~\ref{lem:dichotomy} constrains only the \emph{numerical} class of $H$, so $c=0$ means $H=K_X+\tau$ with $\tau$ torsion. Such an $H$ is ample, ampleness being a numerical condition, and it need not be non-special: for $\tau$ of order two, $\chi(\OO_X(K_X+\tau))=1$ and $h^2(\OO_X(K_X+\tau))=h^0(\OO_X(-\tau))=0$ give $h^0(\OO_X(K_X+\tau))=1+h^1(\OO_X(\tau))$. Such twists do occur, and \cite[Prop.~4.7]{Cho26} says which: $h^0(\OO_X(K_X+\tau))=2$ for three of the $63$ non-trivial torsion classes and $=1$ for the other sixty, so exactly three of the twists $K_X+\tau$ are special, each with $h^1=1$. The total agrees with an a priori count from the Inoue description: by \cite{BC11} primary Burniat surfaces are Inoue's surfaces \cite{Ino94}, $X=\widehat Z/(\ZZ/2)^3$ with $\widehat Z\subset E_1\times E_2\times E_3$ an invariant $(2,2,2)$-hypersurface and the action free, so decomposing $\varpi_*\OO_{\widehat Z}$ into characters and using $q(\widehat Z)=3$ gives $\sum_{\chi\neq0}h^1(\OO_X(\tau_\chi))=q(\widehat Z)-q(X)=3$; the three special classes lie necessarily among these seven $\tau_\chi$. The two counts are obtained by unrelated methods --- the Inoue description on one side, the algorithm of \cite{Cho26} on the other --- so their agreement is an independent confirmation of both.
\end{remark}

\subsubsection{Cohomology along the rays}
Write $G=(\ZZ/2)^2$ with non-trivial characters $\chi_1,\chi_2,\chi_3$ and let $L_i=L_{\chi_i}$ be the building data of the cover, determined by $2L_i\equiv\Delta_j+\Delta_k$ for $\{i,j,k\}=\{1,2,3\}$. In the labelling of \S\ref{subsec:burniat} the branch divisors are, with indices mod $3$,
\[
\Delta_i\;=\;\widetilde{s_i}+\widetilde{m_i^{(1)}}+\widetilde{m_i^{(2)}}+E_{i+2}
\;\equiv\;3e_0-3e_i-e_{i+1}+e_{i+2},
\]
where $s_i$ is the side through $p_i,p_{i+1}$ and $m_i^{(1)},m_i^{(2)}$ are the extra lines through $p_i$; hence
\[
L_1=3e_0-e_2-2e_3,\qquad L_2=3e_0-2e_1-e_3,\qquad L_3=3e_0-e_1-2e_2 .
\]
One checks the standard consistency relations $\sum\Delta_i\equiv-3K_Y$, $L_i+L_j\equiv L_k+\Delta_k$, $\sum_iL_i\cdot(L_i+K_Y)=-6$, whence $\chi(\OO_X)=4\chi(\OO_Y)+\tfrac12\sum L_i(L_i+K_Y)=1$ and $K_X^2=(2K_Y+\sum\Delta_i)^2=K_Y^2=6$. For divisors of the form $K_X+\pi^*F$ the identity $\pi_*\omega_X=\bigoplus_\chi\omega_Y\otimes L_\chi$ and the projection formula give
\begin{equation}\label{eq:eigen}
H^q\bigl(X,\,K_X+\pi^*F\bigr)\;=\;\bigoplus_{\chi}H^q\bigl(Y,\,K_Y+L_\chi+F\bigr),
\qquad L_{\chi_0}=0 ,
\end{equation}
reducing everything to the rational surface $Y$, where a \emph{nef} divisor $N$ has $h^1(N)=h^2(N)=0$ and $h^0(N)=\chi(N)$, since $N-K_Y$ is ample and $K_Y-N$ is never effective.

\begin{proposition}[cf.\ {\cite[Prop.~4.27]{Cho26}}]\label{prop:ray}
Let $u\ge1$ and set $H_u:=K_X+\pi^*\OO_Y\bigl(u(e_0-e_1)\bigr)$, of numerical class $K_X+2u(h-f_1)$. Then $H_u$ is ample and
\[
h^0(\OO_X(H_u))=3u,\qquad h^1(\OO_X(H_u))=u-1,\qquad h^2(\OO_X(H_u))=0 .
\]
In particular $H_u$ is special exactly for $u\ge2$; the class $H_1$ lies on the ray but is not special.
\end{proposition}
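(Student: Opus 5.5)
The plan is to compute everything on $Y$ via the eigensheaf decomposition \eqref{eq:eigen}. With $F=u(e_0-e_1)$ this reads
\[
H^q\bigl(\OO_X(H_u)\bigr)\;=\;\bigoplus_{\chi}H^q\bigl(Y,\OO_Y(K_Y+L_\chi+F)\bigr).
\]
Ampleness of $H_u$ is already in Lemma~\ref{lem:dichotomy}: $K_X$ is ample and $\pi^*F=2u(h-f_1)$ is nef. It then suffices to treat the four summands separately, with $K_Y=-3e_0+e_1+e_2+e_3$ and the $L_i$ as displayed before the statement.

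\emph{Trivial character.} The summand is $K_Y+F$. By Serre duality, $h^q(K_Y+F)=h^{2-q}(-F)$. We have $h^0(-F)=0$ since $-F$ is anti-effective and non-zero. Also $h^2(-F)=h^0(K_Y+F)=0$, because $(K_Y+F)\cdot(e_0-e_1)=-2<0$ against a nef class. Riemann--Roch gives
\[
\chi(-F)=1+\tfrac12\bigl(F^2+F\cdot K_Y\bigr)=1-u,
\]
using $F^2=0$ and $F\cdot K_Y=-2u$. Hence this summand contributes $(0,u-1,0)$, which is the entire $h^1$ that the statement predicts.

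\emph{Character $\chi_1$.} The summand is $N_1=K_Y+L_1+F=ue_0-(u-1)e_1-e_3$. Testing against the six $(-1)$-curves gives intersection numbers $u-1,0,1,1,0,u-1$, so $N_1$ is nef. By the remark after \eqref{eq:eigen} its cohomology is therefore concentrated in degree zero, with
\[
h^0(N_1)=\chi(N_1)=1+\tfrac12\bigl(N_1^2-K_Y\cdot N_1\bigr)=1+\tfrac12\bigl((2u-2)+2u\bigr)=2u.
\]

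\emph{Characters $\chi_2,\chi_3$.} The summands are $N_2=ue_0-(u+1)e_1+e_2$ and $N_3=ue_0-ue_1-e_2+e_3$. These are not nef: each has degree $-1$ on a $(-1)$-curve (for instance $N_2\cdot e_2=-1$ and $N_3\cdot e_3=-1$). The step I would carry out is the standard reduction: if $E$ is a $(-1)$-curve with $N\cdot E=-1$, then $0\to\OO_Y(N-E)\to\OO_Y(N)\to\OO_E(-1)\to0$ gives $h^q(N)=h^q(N-E)$ for all $q$. I would iterate this along the hexagon of $(-1)$-curves until reaching either a nef class, handled as for $N_1$, or a class whose Serre dual is handled as in the trivial case. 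The target is that $N_2$ and $N_3$ together contribute $h^0=u$ and $h^1=h^2=0$; the $S_3$-symmetry of the configuration should pair them, each giving roughly $u/2$ or one giving $u$ and the other $0$. Summing all four summands then yields $(3u,u-1,0)$.

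\emph{Main obstacle.} The bookkeeping for $\chi_2,\chi_3$ is where I expect the difficulty. The subtraction of $(-1)$-curves may cycle around the hexagon, or may land on a class that is neither nef nor dual-nef, and one must make sure no spurious $h^1$ appears there. If the iteration becomes messy, I would instead compute the Zariski decomposition of each $N_i$ on the toric surface $Y$. The fixed part will consist of $(-1)$-curves met with degree $-1$, and for the nef part I would use $h^1=0$ plus Riemann--Roch. As a cross-check, I would compare the total $\chi(\OO_X(H_u))=1+\tfrac12H_u\cdot(H_u-K_X)=1+2u$ with the sum $(2u)+(1-u)+\chi(N_2)+\chi(N_3)$; this forces $\chi(N_2)+\chi(N_3)=u$. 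The final claim ($H_u$ special iff $u\ge2$) is then read off from $h^1=u-1$.
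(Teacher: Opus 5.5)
Your framework is the paper's own: the same splitting via \eqref{eq:eigen} and the same four classes on $Y$ (your $N_1,N_2,N_3$ are the paper's $M_1,M_2,M_3$). The two summands you actually compute are right. Serre duality to $-F$ for the trivial character is cleaner than the paper's plane-curve argument, and the direct nefness check of $N_1$ (degrees $u-1,0,1,1,0,u-1$, $\chi=2u$) replaces the paper's ``nef class plus one base point''.

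The gap is that you never compute the $\chi_2$ and $\chi_3$ summands, and they are not a formality. They carry $u$ of the $3u$ sections. The formula $h^1(\OO_X(H_u))=u-1$ also needs $h^1(N_2)=h^1(N_3)=0$, which your cross-check cannot supply, since it only fixes $\chi(N_2)+\chi(N_3)=u$. The symmetry heuristic you offer instead is unfounded. The only permutation of $p_1,p_2,p_3$ fixing $F$ is the transposition of $p_2,p_3$, and it does not preserve the branch data: $\Delta_1\equiv3e_0-3e_1-e_2+e_3$ goes to $3e_0-3e_1+e_2-e_3$, which is none of the $\Delta_j$. So nothing pairs $\chi_2$ with $\chi_3$. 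In fact the two summands come out as $(0,0,0)$ and $(u,0,0)$, so ``roughly $u/2$ each'' is false.

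The computation closes with your own degree-$(-1)$ reduction, and the cycling you fear never occurs, because every curve subtracted has degree exactly $-1$.

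For $N_2$: we have $N_2\cdot e_2=-1$, and $N_2-e_2=ue_0-(u+1)e_1$ has degree $-1$ on $e_0-e_1-e_2$. Subtracting that curve gives $(u-1)e_0-ue_1+e_2$, which is $N_2$ with $u$ lowered by one. Descending ends at $-e_1+e_2$ and then $-e_1$. All cohomology of $-e_1$ vanishes by $0\to\OO_Y(-E_1)\to\OO_Y\to\OO_{E_1}\to0$, so $h^\bullet(N_2)=(0,0,0)$.

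For $N_3$: subtract $e_3$ (degree $-1$), then $e_0-e_1-e_2$ (degree $-1$ on $ue_0-ue_1-e_2$). This leaves the nef class $(u-1)(e_0-e_1)$ with $\chi=u$, so $h^\bullet(N_3)=(u,0,0)$.

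Summing with your $(0,u-1,0)$ and $(2u,0,0)$ gives $(3u,u-1,0)$. The paper treats these two summands the same way in substance. It removes $E_2$ from $M_2$ and then counts plane curves of degree $u$ with a point of multiplicity $u+1$, and it removes $E_3$ and $\widetilde{s_1}$ from $M_3$. However, it tracks only $h^0$ and recovers $h^1$ from $\chi$ and a separately checked $h^2=0$. Your exact sequence carries every $h^q$ at once, so once written out your version needs no separate $h^2$ check.
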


\begin{proof}
Apply \eqref{eq:eigen} with $F=u(e_0-e_1)$; the four classes on $Y$ are
\[
M_0=(u-3)e_0+(1-u)e_1+e_2+e_3,\qquad M_1=ue_0+(1-u)e_1-e_3,
\]
\[
M_2=ue_0-(1+u)e_1+e_2,\qquad M_3=ue_0-ue_1-e_2+e_3,
\]
and in each case $K_Y-M_j$ has negative degree, so $h^2(M_j)=0$ and $h^1$ is read off from $\chi$ once $h^0$ is known; recall that an irreducible curve $E$ with $M\cdot E<0$ is a fixed component, so that removing it preserves $h^0$ (but neither $h^1$ nor $h^2$). For $M_0$, discarding the fixed parts $e_2,e_3$ leaves plane curves of degree $u-3$ with a point of multiplicity $u-1>u-3$: none, so $h^0(M_0)=0$ and $\chi(M_0)=1-u$ gives $h^1(M_0)=u-1$. For $M_1$, the class $N=ue_0-(u-1)e_1$ is nef with $h^0(N)=\chi(N)=2u+1$, and imposing the base point $p_3$ is a non-trivial condition, so $h^0(M_1)=2u=\chi(M_1)$ and $h^1(M_1)=0$. For $M_2$, one has $M_2\cdot e_2<0$, so $E_2$ is a fixed component; after removing it the sections would be degree-$u$ plane curves with a point of multiplicity $u+1$, of which there are none, so $h^0(M_2)=0=\chi(M_2)$. For $M_3$, one has $M_3\cdot e_3<0$ and, after removing $E_3$, the $(-1)$-curve $\widetilde{s_1}=e_0-e_1-e_2$ has negative intersection too; peeling both off leaves $(u-1)(e_0-e_1)$, nef with $h^0=\chi=u$, so $h^0(M_3)=u=\chi(M_3)$ and $h^1(M_3)=0$. Summing, $h^\bullet=(3u,u-1,0)$; as a check, $\chi(\OO_X(H_u))=2u+1=1+\tfrac12H_u\cdot(H_u-K_X)$. The corresponding classes on the two other rays are obtained by the symmetry of the Burniat configuration permuting $p_1,p_2,p_3$, so all three rays behave identically.
\end{proof}

\begin{corollary}\label{cor:candidate}
The class $H:=H_2=K_X+\pi^*\OO_Y(2e_0-2e_1)\equiv7h-5f_1-f_2-f_3$ is ample with
\begin{gather*}
h^0(\OO_X(H))=6,\qquad h^1(\OO_X(H))=1,\qquad \chi(\OO_X(H))=5,\\
H^2=22,\qquad H\cdot K_X=14,\\
H\cdot\varepsilon_1=H\cdot\sigma_{23}=5,\qquad
H\cdot\lambda_1=2,\qquad H\cdot\lambda_2=H\cdot\lambda_3=6,\\
H\cdot\varepsilon_2=H\cdot\varepsilon_3=H\cdot\sigma_{12}=H\cdot\sigma_{13}=1 .
\end{gather*}
It is an explicit special ample class with $H-K_X\neq0$, of Ulrich length $\len=\chi(\OO_X(H))+1=6$; the cheapest special ample line bundles on $X$ are the torsion twists of Remark~\ref{rem:vertex}, which however carry no information about the rays.
\end{corollary}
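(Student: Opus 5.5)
The statement is Corollary~\ref{cor:candidate}. It is the case $u=2$ of Proposition~\ref{prop:ray} combined with intersection numbers in the lattice $\Num(X)=\ZZ\langle h,f_1,f_2,f_3\rangle$, so I would check it in three steps.

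\emph{Step 1: the class and the cohomology.} Since $\iota(e_0-e_1)=h-f_1$, we have $\pi^*\OO_Y(2e_0-2e_1)\equiv 4(h-f_1)$. Hence
\[
H\equiv(3h-f_1-f_2-f_3)+4h-4f_1=7h-5f_1-f_2-f_3 .
\]
Proposition~\ref{prop:ray} with $u=2$ gives $H$ ample and $h^\bullet(\OO_X(H))=(6,1,0)$, so $\chi(\OO_X(H))=5$. The Ulrich length is then $\len=\chi(\OO_X(H))+1=6$, as in \S\ref{subsec:existence}.

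\emph{Step 2: the intersection numbers.} The form is $\mathrm{diag}(1,-1,-1,-1)$, so
\[
H^2=49-25-1-1=22,\qquad H\cdot K_X=21-5-1-1=14 .
\]
As a consistency check, $1+\tfrac12(22-14)=5=\chi(\OO_X(H))$. For the curves, take the classes from the table in \S\ref{subsec:burniat}:
\begin{itemize}
\item $H\cdot f_1=5$ and $H\cdot f_2=H\cdot f_3=1$;
\item $H\cdot(h-f_2-f_3)=7-1-1=5$;
\item $H\cdot(h-f_1-f_j)=7-5-1=1$ for $j=2,3$;
\item $H\cdot(h-f_1)=2$ and $H\cdot(h-f_j)=6$ for $j=2,3$.
\end{itemize}
The six values on the curves spanning $\NE(X)$ sum to $5+1+1+5+1+1=14=H\cdot K_X$, which agrees with Corollary~\ref{cor:degreetwo}.

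\emph{Step 3: the closing sentence.} The class $H-K_X\equiv 4(h-f_1)$ is nonzero. The comparison with the vertex classes $K_X+\tau$ of Remark~\ref{rem:vertex} is a remark, not a claim to prove; the only content is that those classes have $H-K_X\equiv0$, so they carry no information about the rays.

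I expect no real obstacle, since all the work sits in Proposition~\ref{prop:ray}. The one point to be careful about is the sign bookkeeping with the form $\langle1\rangle\oplus\langle-1\rangle^{3}$ when pairing with $\sigma_{ij}=h-f_i-f_j$. Curves with $H$-degree one, for instance $\varepsilon_2$, are consistent with Lemma~\ref{lem:bpf-degree}, because $H$ is not base point free.
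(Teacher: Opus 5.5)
Your proposal is correct and follows the paper's own route: Proposition~\ref{prop:ray} at $u=2$ gives ampleness and $h^\bullet(\OO_X(H))=(6,1,0)$, and the intersection numbers come from the lattice $\Num(X)\cong\langle1\rangle\oplus\langle-1\rangle^{\oplus3}$, all of which you compute correctly. One small point: your sum check uses only the identity $\sum[C]=K_X$ from the proof of Corollary~\ref{cor:degreetwo}, not that corollary's bound $H\cdot C\ge2$, which does not apply here because $H$ is not base point free.
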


\begin{proof}
Immediate from Proposition~\ref{prop:ray} with $u=2$ and the intersection numbers of Lemma~\ref{lem:mori}.
\end{proof}

\begin{remark}[Odd $c$, after Cho]\label{subsubsec:cho-rays}
Formula \eqref{eq:eigen} applies to $K_X+\pi^*F$, hence to even $c$ only. The remaining classes are given in closed form by \cite[Prop.~4.27]{Cho26}, whose family $[2\ell;0,0,\ell]+[K_X]$ is exactly our ray with $\ell=c$: there $h^0$ is computed for every $c\ge1$ and each of the $64$ torsion classes, the untwisted case reading $h^0=\lfloor(3c+1)/2\rfloor$ and, since $\chi=c+1$, $h^1=\lfloor(c-1)/2\rfloor$. At $c=2u$ this is $(3u,u-1,0)$, in agreement with Proposition~\ref{prop:ray}; the agreement also fixes the identification of normalizations, and, the two computations being independent --- eigensheaf descent to $Y$ here, the symmetric-coordinate algorithm of \cite{Cho26} there --- it is a genuine cross-check of both. In particular the rays carry special classes for odd $c$ as well, already at $c=3$ where $h^0=5$ against $\chi=4$, while at $c=2$ one has $h^0=3=\chi$, so that class is not special.
\end{remark}

\subsubsection{No class on the rays is base point free}
The four degrees equal to one in Corollary~\ref{cor:candidate} are what obstructs base point freeness, through an elementary fact not particular to Burniat surfaces.

\begin{proposition}\label{prop:notva}
Let $H=K_X+c\,(h-f_i)$, $c\ge0$, be an integral class on one of the three rays of Lemma~\ref{lem:dichotomy}, the vertex $c=0$ included. Then $H$ is ample but \emph{not} base point free; in particular it is not very ample. Only intersection numbers and the genera of the ramification curves enter, so the same holds for every line bundle numerically equivalent to $H$, in particular for all torsion twists.
\end{proposition}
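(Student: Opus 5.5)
The plan is to exhibit, for each class on the rays, one of the six elliptic ramification curves on which $H$ has degree exactly one, and then invoke Lemma~\ref{lem:bpf-degree}. Ampleness is already the converse half of Lemma~\ref{lem:dichotomy}. By the symmetry permuting $p_1,p_2,p_3$ it suffices to treat $i=1$, so $H\equiv K_X+c(h-f_1)$.

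The first step is to compute $(h-f_1)\cdot C$ for the six curves of Lemma~\ref{lem:six-elliptic}, using the unimodular form $\langle1\rangle\oplus\langle-1\rangle^{\oplus3}$ on $\ZZ\langle h,f_1,f_2,f_3\rangle$:
\[
(h-f_1)\cdot f_1=1,\qquad (h-f_1)\cdot f_2=(h-f_1)\cdot f_3=0,
\]
\[
(h-f_1)\cdot(h-f_1-f_2)=(h-f_1)\cdot(h-f_1-f_3)=0,\qquad (h-f_1)\cdot(h-f_2-f_3)=1 .
\]
So $h-f_1$ is orthogonal to $\varepsilon_2,\varepsilon_3,\sigma_{12},\sigma_{13}$. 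Since $K_X\cdot C=1$ on all six curves (table of \S\ref{subsec:burniat}), we get $H\cdot\varepsilon_2=1$ for every $c\ge0$, vertex included. This matches the four degrees equal to one in Corollary~\ref{cor:candidate}.

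The second step is to conclude. The curve $\varepsilon_2$ is smooth of genus one by Lemma~\ref{lem:six-elliptic}, so Lemma~\ref{lem:bpf-degree} shows that $H$ is not base point free. Very ample divisors are base point free, so $H$ is not very ample. The argument uses only the numerical class of $H$ and the curve $\varepsilon_2$, so it applies verbatim to any line bundle numerically equivalent to $H$, including all torsion twists. The case $h^0(\OO_X(H))=0$ (possible at the vertex for $\tau=0$) is trivially non-base-point-free, and Lemma~\ref{lem:bpf-degree} covers it anyway.

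There is no real obstacle here. The only point needing care is the bookkeeping of which ramification curves are orthogonal to $h-f_i$ for general $i$. For ray $i$ these are $\varepsilon_j$ and $\sigma_{ij}$ with $j\neq i$, and this follows from the symmetry or from the same two-line computation.
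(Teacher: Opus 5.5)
Your proposal is correct and matches the paper's proof: you take $\varepsilon_j$ with $j\neq i$, use $(h-f_i)\cdot\varepsilon_j=0$ to get $H\cdot\varepsilon_j=K_X\cdot\varepsilon_j=1$ for every $c\ge0$, and then apply Lemma~\ref{lem:bpf-degree} to the smooth elliptic curve $\varepsilon_j$, with ampleness coming from the converse half of Lemma~\ref{lem:dichotomy}. Because the argument is purely numerical, it applies to all torsion twists, as you note. Your extra check that the $\sigma_{ij}$ are also orthogonal to $h-f_i$ is accurate but not needed.
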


\begin{proof}
Fix $j\neq i$ and consider $\varepsilon_j=R(E_j)$. In $\Num(X)$ one has $h\cdot f_j=f_i\cdot f_j=0$, so $(h-f_i)\cdot\varepsilon_j=0$ and therefore $H\cdot\varepsilon_j=K_X\cdot\varepsilon_j=1$ for every $c\ge0$ --- at the vertex this reads $H\equiv K_X$ and applies to all torsion twists $K_X+\tau$, precisely the classes shown to be special in Remark~\ref{rem:vertex}. Now $\varepsilon_j$ is the double cover of $E_j\cong\PP^1$ branched at the four points where $E_j$ meets the branch components not containing it, hence a smooth elliptic curve, in accordance with $p_a(\varepsilon_j)=1$ in the table of \S\ref{subsec:burniat}. Lemma~\ref{lem:bpf-degree} forbids $H$ from being base point free, and a very ample divisor is base point free.
\end{proof}

Together with Lemma~\ref{lem:dichotomy}, Proposition~\ref{prop:ray} and Remark~\ref{rem:vertex} this proves Theorem~\ref{thm:burniat}(b), and gives the announced second route to Corollary~\ref{cor:burniat-nonspecial}: an ample and base point free $H$ with $h^1\neq0$ would lie on a ray, and no class on a ray is base point free.

\begin{remark}[Attribution]\label{rem:cho-notva}
Proposition~\ref{prop:notva} is the special case of Corollary~\ref{cor:degreetwo} in which the ample class lies on one of the rays, and it is proved by the same lemma; we have stated it separately only because the degree computation is immediate there from the lattice description. The same statement is contained in \cite[Prop.~5.4]{Cho26}, in the stronger form quantified over all ample and base point free divisors --- see Remark~\ref{rem:cho-degreetwo}.
\end{remark}

\begin{remark}[Why the rays are invisible to the standard tools]\label{rem:reider-silent}
Reider's criterion for very ampleness of $K_X+L$ requires $L^2\ge10$, while here $L=H-K_X=c(h-f_i)$ has $L^2=0$ for every $c$: the entire ray lies outside its reach, and Proposition~\ref{prop:notva} shows that this silence is not an artefact of the method --- not merely very ampleness but already base point freeness genuinely fails. Special classes are confined to the thin locus where the standard positivity techniques go silent simultaneously, and on primary Burniat surfaces that locus contains no polarization at all. All cohomology groups appearing along the rays, including those of \S\ref{subsec:twisted}, are also accessible through \cite{Cho26}, in the closed form of \cite[Prop.~4.27]{Cho26} for the untwisted classes and algorithmically otherwise; the two agree where we have compared them.
\end{remark}

\begin{remark}[Cayley--Bacharach for free, and what it costs]\label{rem:CBfree}
Take $H=H_2$ and $\varepsilon\in\{\varepsilon_2,\varepsilon_3,\sigma_{12},\sigma_{13}\}$, so that $H\cdot\varepsilon=1$ and $\len(Z)=\tfrac12H\cdot(H-K_X)+H\cdot\varepsilon=5$. The Cayley--Bacharach system is $|H+2\varepsilon|$, and $(H+2\varepsilon)\cdot\varepsilon=H\cdot\varepsilon+2\varepsilon^2=-1<0$, so $\varepsilon$ is a \emph{fixed component} of it. Consequently, for any reduced $0$-cycle $Z\subset\varepsilon$, every divisor of the system through a colength-one subscheme of $Z$ contains $\varepsilon\supset Z$: the Cayley--Bacharach property holds automatically, with no genericity and no B\'ezout threshold.

There is a price, and it is structural. For $\varepsilon$ rigid the unique section of $\OO_X(\varepsilon)$ lies in $H^0(\OO_X(\varepsilon)\otimes\II_Z)$ as soon as $Z\subset\varepsilon$, so the $-2H$ vanishing of \S\ref{subsec:twisted} needs its own connecting map to be injective; choosing $Z\not\subset\varepsilon$ makes that vanishing automatic but destroys the free Cayley--Bacharach property. This is a second obstruction for twisted kernels, independent of $h^1$: \emph{the two Ulrich twists pull the cycle in opposite directions along $\varepsilon$}. (In \S\ref{subsec:pg} the opposite convention $Z\cap G=\emptyset$ is adopted, precisely to keep the second vanishing free.) What remains is the connecting-map condition of \S\ref{subsec:twisted}, and there the answer is negative.
\end{remark}

\begin{remark}[Why the primary case]\label{rem:integrality}
The reduction of this subsection uses that $\iota$ carries $\Num(Y)$ integrally into $\Num(X)$: only then is $H\cdot\iota(\Gamma)$ a positive integer on every extremal class, so that $(H-K_X)\cdot\iota(\Gamma)\ge0$ follows from ampleness together with $K_X\cdot\iota(\Gamma)=1$. On a primary Burniat surface this holds because the six $(-1)$-curves of the degree six del Pezzo are precisely $E_1,E_2,E_3$ and the three sides, all components of the branch configuration, so $\pi^*\Gamma=2R(\Gamma)$ on every extremal ray. For $K_X^2<6$ the base is a blow up of $\PP^2$ at $9-K_X^2\ge4$ points, its extremal rays need no longer be exhausted by branch components, and for $\Gamma\not\subset\Delta$ the divisor $\pi^*\Gamma$ is reduced, so $\iota(\Gamma)$ need not be integral. We have not analysed the degenerate configurations, and we do not know whether every very ample divisor on a Burniat surface with $K_X^2<6$ is non-special.
\end{remark}

\subsection{Twisted kernels: the surviving mechanism}\label{subsec:twisted}
Theorem~\ref{thm:nogo} localizes the obstruction: the connecting homomorphism $H^0(\OO_X(H)\otimes\II_Z)\to H^1(\OO_X(K_X))$ has zero target when $q=0$, so the unwanted sections inject into $H^0(\EE(-H))$ and must be killed at the source, which is impossible below the independence threshold. The way out is to \emph{move the kernel}, so that the connecting map acquires a non-zero target and the extension class can do the killing.

Concretely, let $A=H+K_X-\varepsilon$ and $B=2H+\varepsilon$ for a curve class $\varepsilon$, so that $c_1=A+B=3H+K_X$ is still special. Twisting $0\to\OO_X(A)\to\EE\to\OO_X(B)\otimes\II_Z\to0$ by $-H$ gives the exact sequence
\begin{multline*}
0\to H^0(\OO_X(K_X-\varepsilon))\to H^0(\EE(-H))\\
\to H^0(\OO_X(H+\varepsilon)\otimes\II_Z)
\xrightarrow{\ \delta_e\ } H^1(\OO_X(K_X-\varepsilon)),
\end{multline*}
where $\delta_e$ is cup product with the extension class. Thus
\[
h^0\bigl(\EE(-H)\bigr)=0
\iff
h^0(\OO_X(K_X-\varepsilon))=0\ \text{and}\ \delta_e\ \text{is injective}.
\]
The twist by $-2H$ behaves differently, and the difference matters. It reads
\[
0\to\OO_X(K_X-\varepsilon-H)\to\EE(-2H)\to\OO_X(\varepsilon)\otimes\II_Z\to0 ,
\]
so that $h^0(\EE(-2H))=0$ requires, besides $h^0(\OO_X(K_X-\varepsilon-H))=0$ (automatic for $p_g=0$ and $H$ ample), that the corresponding connecting map be injective on $H^0(\OO_X(\varepsilon)\otimes\II_Z)$. Speciality of $H$ can thus be \emph{absorbed} by the extension class, up to the dimension of the target --- but on two twists at once, and the two impose opposite demands on $Z$; see Remark~\ref{rem:CBfree}.

\smallskip
\noindent \emph{First numerical steps on a primary Burniat surface.} Take $\varepsilon=\varepsilon_1$ (class $f_1$, so $\varepsilon^2=-1$, $K_X\cdot\varepsilon=1$, $p_a=1$) and $A=H+K_X-\varepsilon$, $B=2H+\varepsilon$. One computes:
\begin{enumerate}[label=\textup{(\alph*)}]
\item \emph{Kernel vanishings:} $h^0(\OO_X(A-H))=h^0(\OO_X(K_X-\varepsilon))=0$ and $h^0(\OO_X(A-2H))=0$, both because $p_g=0$.
\item \emph{The absorber:} $\chi(\OO_X(K_X-\varepsilon))=1+\tfrac12(K_X-\varepsilon)\cdot(-\varepsilon)=0$, $h^0=0$, and $h^2(\OO_X(K_X-\varepsilon))=h^0(\OO_X(\varepsilon))=1$ (the curve $\varepsilon$ is rigid: $\varepsilon^2=-1$). Hence
\[
h^1\bigl(\OO_X(K_X-\varepsilon)\bigr)=1 :
\]
the connecting map $\delta_e$ has a one-dimensional target --- exactly the room needed to absorb one unit of speciality.
\item \emph{The cycle:} $\len(Z)=c_2-A\cdot B=\tfrac12H\cdot(H-K_X)+H\cdot\varepsilon$ \textup{(}using $\varepsilon^2-K_X\cdot\varepsilon+2=0$, valid for all twelve curves of the table; for general $\varepsilon$ the right-hand side acquires the extra term $\varepsilon^2-K_X\cdot\varepsilon+2$\textup{)}, and one checks $\chi(\OO_X(H+\varepsilon))=\len(Z)$ \textup{(}by the same identity\textup{)}; since $h^2(\OO_X(H+\varepsilon))=h^0(\OO_X(K_X-H-\varepsilon))=0$ for $p_g=0$ and $H$ ample, this gives
\[
h^0\bigl(\OO_X(H+\varepsilon)\otimes\II_Z\bigr)\;\ge\;h^1\bigl(\OO_X(H+\varepsilon)\bigr):
\]
the speciality budget has moved from $h^1(\OO_X(H))$ to $h^1(\OO_X(H+\varepsilon))$, and the construction can close whenever $h^1(\OO_X(H+\varepsilon))\le1$, $Z$ is chosen so that equality holds in the display, and $e$ is chosen with $\delta_e\neq0$.
\item \emph{Local freeness:} the Cayley--Bacharach system is now $|B-A+K_X|=|H+2\varepsilon|\neq|H|$, so Theorem~\ref{thm:nogo} does not apply; this is precisely where the points-on-a-curve mechanism of Lemma~\ref{lem:bezout} (with $C$ among the curves of the table in \S\ref{subsec:burniat}, e.g.,\ $C=\varepsilon$ itself, for which $(H+2\varepsilon)\cdot\varepsilon=H\cdot\varepsilon-2$ is small) re-enters the picture.
\end{enumerate}
\smallskip
\noindent \emph{The Burniat test for twisted kernels.} By Corollary~\ref{cor:candidate} an explicit ample class with $h^1(\OO_X(H))=1$ is available, namely $H=H_2$, and by Remark~\ref{rem:CBfree} the Cayley--Bacharach step is then automatic for cycles on any of $\varepsilon_2,\varepsilon_3,\sigma_{12},\sigma_{13}$. By Proposition~\ref{prop:notva} this $H$ is not base point free, hence not a polarization; but the vanishings of Definition~\ref{def:ulrich} are purely cohomological and still make sense, and it is instructive that the construction fails there for a second reason, independent of the failure of positivity. Proposition~\ref{prop:twistcoh} and Theorem~\ref{thm:twistedfail} together prove part (c) of Theorem~\ref{thm:burniat}.

\begin{lemma}[Ramification-twisted eigensheaves, cf.\ \cite{Par91}]\label{lem:ramtwist}
Let $\Gamma\subset\Delta_i$ be a reduced component of the branch divisor of the bidouble cover $\pi:X\to Y$, let $\varepsilon=R(\Gamma)$ be its reduced preimage, and let $g_i\in G$ be the inertia element along $\Gamma$. Then
\[
\pi_*\omega_X(\varepsilon)\;\cong\;
\bigoplus_{\chi(g_i)=1}\omega_Y\otimes L_\chi(\Gamma)
\;\oplus\;
\bigoplus_{\chi(g_i)=-1}\omega_Y\otimes L_\chi .
\]
With the conventions of \S\ref{subsec:rays}, where $2L_i\equiv\Delta_j+\Delta_k$, the characters trivial on $g_i$ are $\chi_0$ and $\chi_i$: in the decomposition \eqref{eq:eigen} of $H^\bullet(X,K_X+\pi^*F)$ the summands $M_0$ and $M_i$ acquire the extra divisor $\Gamma$, while $M_j$ and $M_k$ are unchanged.
\end{lemma}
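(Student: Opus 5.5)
The plan is to go through Pardini's structure theory for abelian covers \cite{Par91}, in its dual form for $\pi_*\omega_X$, and then follow the single extra divisor $\varepsilon$ from one eigensheaf to another. For a smooth bidouble cover we have $\pi_*\OO_X=\bigoplus_\chi L_\chi^{-1}$. Relative duality for the finite flat morphism $\pi$ gives $\pi_*\omega_X\cong\mathcal{H}om_{\OO_Y}(\pi_*\OO_X,\omega_Y)=\bigoplus_\chi\omega_Y\otimes L_\chi$, and $G$ acts on the summand indexed by $\chi$ through the dual character. This is the identity \eqref{eq:eigen} already in use, so what has to be understood is the effect of twisting by $\OO_X(\varepsilon)$.

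The first step is local, at a general point of $\Gamma$. In local coordinates $(x,y)$ on $Y$ with $\Gamma=\{x=0\}$, the cover factors into a double cover $w^2=x$ ramified along $\Gamma$, with inertia $g_i$, and an étale part. The curve $\varepsilon$ is $\{w=0\}$, and $\OO_X(\varepsilon)=w^{-1}\OO_X$. Write a $g_i$-semi-invariant local section of $\omega_X$ as $f(x,y)\,w^a\,dw\wedge dy$ with $a\in\{0,1\}$. Since $dx=2w\,dw$, the form $dw\wedge dy$ is anti-invariant under $g_i\colon w\mapsto-w$. Consequently the $g_i$-invariant forms are the $f\,w\,dw\wedge dy=\tfrac12 f\,dx\wedge dy$, which are exactly the forms pulled back from $\omega_Y$. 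The anti-invariant ones are $f\,dw\wedge dy$.

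Now divide by $w$, which is what twisting by $\OO_X(\varepsilon)$ does. Dividing an anti-invariant form $f\,dw\wedge dy$ by $w$ produces an invariant form $f\,dw\wedge dy/w=\tfrac12 f\,dx\wedge dy/x$. These are invariant forms with a simple pole along $\Gamma$, so the corresponding summand acquires $+\Gamma$. Dividing an invariant form $\tfrac12 f\,dx\wedge dy$ by $w$ produces $f\,dw\wedge dy$, which is anti-invariant and regular, so that summand is unchanged. Globally, then, the summands whose $G$-action is trivial on $g_i$ become $\omega_Y\otimes L_\chi(\Gamma)$, and the others stay $\omega_Y\otimes L_\chi$.

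The second step is to make this rigorous and global. The sheaf $\pi_*\omega_X(\varepsilon)$ and the proposed right-hand side are both reflexive, hence locally free, on the smooth surface $Y$. It therefore suffices to compare them off a codimension-two set, namely away from the singular points of $\Delta$ and the other branch components, where the local model above is exact.

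The last step is the character bookkeeping, and this is where care is needed: one has to pin down which characters are trivial on $g_i$ under the convention $2L_i\equiv\Delta_j+\Delta_k$, paying attention to the duality twist in the action on $\pi_*\omega_X$. By Pardini's relations, $L_\chi$ involves $\Delta_i$ (with coefficient $\tfrac12$) exactly when $\chi(g_i)=-1$. Here $L_i$ omits $\Delta_i$, so $\chi_i(g_i)=1$, and of course $\chi_0(g_i)=1$. The characters $\chi_j$ and $\chi_k$ satisfy $\chi(g_i)=-1$. So, reading the previous step in terms of the summand labels $\chi$ themselves rather than the dual characters, it is $M_0$ and $M_i$ that gain $\Gamma$, as claimed. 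As a sanity check I would take degrees via $\chi(\omega_X(\varepsilon))=\chi(\omega_X)+\tfrac12\varepsilon(\varepsilon+K_X)$, which for $\varepsilon^2=-1$, $K_X\cdot\varepsilon=1$ gives an unchanged $\chi$. On the right-hand side, adding $\Gamma\cong\PP^1$ with $\Gamma^2=-1$ to two summands must likewise preserve the total Euler characteristic. Matching the label conventions in this step is the main obstacle; the local computation is routine.
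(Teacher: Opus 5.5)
Your proposal is correct and follows essentially the paper's route: a local model at a general point of $\Gamma$, extension across the finitely many points where branch components meet (the eigensheaves being line bundles on the smooth surface $Y$), and the character bookkeeping via $2L_\chi\equiv\sum_{\chi(g)=-1}\Delta_g$. The only difference is cosmetic: the paper computes the eigensheaves of $\pi_*\OO_X(-\varepsilon)$ and then applies relative duality, while you compute those of $\pi_*\omega_X(\varepsilon)$ directly with local $2$-forms; the duality-twist issue you flag as the main obstacle does not arise, since every character of $(\ZZ/2)^2$ is self-dual.
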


\begin{proof}
The statement is local at the generic point of $\Gamma$, where the cover is the product of an \'etale double cover with the double cover $x=t^2$, and $\varepsilon=\{t=0\}$. For the ramified factor, $\OO_X(-\varepsilon)=(t)=t\,\OO_Y\oplus x\,\OO_Y$ as an $\OO_Y$-module, the second summand being the invariant part; hence
\[
\pi_*\OO_X(-\varepsilon)\;=\;\bigoplus_{\chi(g_i)=1}L_\chi^{-1}(-\Gamma)\;\oplus\;\bigoplus_{\chi(g_i)=-1}L_\chi^{-1} .
\]
Both sides of this display are direct summands of $\pi_*\OO_X(-\varepsilon)$ as $\OO_Y$-modules, and $\pi_*\OO_X(-\varepsilon)$ is locally free of rank four, $\pi$ being finite and flat and $\OO_X(-\varepsilon)$ a line bundle; a direct summand of a locally free sheaf is locally free, so each character eigensheaf is a line bundle on the smooth surface $Y$. Off the branch locus $\pi$ is \'etale and the decomposition is the standard one of \cite{Par91}, while the local model above identifies the two sides at every point of a branch component lying on no other one; the two sides therefore agree outside the finite set where distinct branch components meet. Two line bundles on a smooth surface which agree in codimension one are isomorphic, a divisor class supported in codimension two being trivial, so the display holds globally. Relative duality gives $\pi_*\omega_X(\varepsilon)=\mathcal{H}om_{\OO_Y}\bigl(\pi_*\OO_X(-\varepsilon),\omega_Y\bigr)$, which is the displayed decomposition. For the last assertion, the fundamental relation $2L_\chi\equiv\sum\{\Delta_m:\chi(g_m)=-1\}$ specializes to $2L_i\equiv\Delta_j+\Delta_k$ precisely when $\chi_i(g_i)=1$ and $\chi_i(g_j)=\chi_i(g_k)=-1$.
\end{proof}

\begin{proposition}\label{prop:twistcoh}
Let $H_u=K_X+\pi^*\OO_Y\bigl(u(e_0-e_1)\bigr)$ with $u\ge1$, and let $\varepsilon$ be one of the four curves
\[
\varepsilon_2,\qquad \varepsilon_3,\qquad \sigma_{12},\qquad \sigma_{13}
\]
of the table in \S\ref{subsec:burniat}, all of which satisfy $H_u\cdot\varepsilon=1$. Then
\[
h^\bullet\bigl(\OO_X(H_u+\varepsilon)\bigr)\;=\;(3u+1,\;u,\;0) .
\]
In particular $h^\bullet(\OO_X(H_2+\varepsilon))=(7,2,0)$.
\end{proposition}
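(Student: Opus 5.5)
The plan is to reduce to the rational surface $Y$ exactly as in Proposition~\ref{prop:ray}, with Lemma~\ref{lem:ramtwist} supplying the twist by $\varepsilon$. Write $H_u+\varepsilon=K_X+\pi^*F+\varepsilon$ with $F=u(e_0-e_1)$. By Lemma~\ref{lem:ramtwist} and the projection formula,
\[
H^q\bigl(X,\OO_X(H_u+\varepsilon)\bigr)\;=\;\bigoplus_{\chi}H^q\bigl(Y,\,M_\chi+\delta_\chi\Gamma\bigr),
\]
where $M_0,\dots,M_3$ are the four classes computed in the proof of Proposition~\ref{prop:ray}. Here $\delta_\chi=1$ for the two characters trivial on the inertia element of $\Gamma$, and $\delta_\chi=0$ otherwise.

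The first step is bookkeeping: locate each $\Gamma$ in the branch divisors $\Delta_i=\widetilde{s_i}+\widetilde{m_i^{(1)}}+\widetilde{m_i^{(2)}}+E_{i+2}$.
\begin{itemize}
\item $E_3$ and $\widetilde{s_1}$ (the preimages of $\varepsilon_3$ and $\sigma_{12}$) lie in $\Delta_1$. For these the modified summands are $M_0$ and $M_1$.
\item $E_2$ and $\widetilde{s_3}$ (for $\varepsilon_2$ and $\sigma_{13}$) lie in $\Delta_3$. For these the modified summands are $M_0$ and $M_3$.
\end{itemize}
So there are four explicit computations, and I expect them to pair up under the symmetry exchanging $p_2$ and $p_3$, which preserves $e_0-e_1$. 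I will nevertheless check both types, since the character labelling of $L_1,L_3$ is not manifestly symmetric.

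For each modified class I use the identity $\chi(M+\Gamma)=\chi(M)+M\cdot\Gamma$, valid for a $(-1)$-curve $\Gamma$. Take the case $\Gamma=E_3$.
\begin{itemize}
\item $M_0\cdot e_3=-1$, so $\chi(M_0+E_3)=-u$. Its sections would again be plane curves of degree $u-3$ with a point of multiplicity $u-1$, after discarding fixed parts. Hence $h^0=0$ and $h^1=u$.
\item $M_1+E_3=ue_0-(u-1)e_1$ is nef, so $h^0=2u+1$ and $h^1=h^2=0$.
\item $M_2$ and $M_3$ are unchanged and contribute $(0,0,0)$ and $(u,0,0)$.
\end{itemize}
Summing gives $(3u+1,u,0)$. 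As a cross-check, $\chi(\OO_X(H_u+\varepsilon))=2u+1+H_u\cdot\varepsilon=2u+2$, which agrees with $\chi(H_u+\varepsilon)=\chi(H_u)+H_u\cdot\varepsilon$ from part (c) of the twisted-kernel discussion. The cases $\widetilde{s_1}$, $E_2$ and $\widetilde{s_3}$ will be done the same way. Each time I peel fixed $(-1)$-curves with negative intersection, as in Proposition~\ref{prop:ray}, until either a nef class remains (then $h^0=\chi$) or an empty plane-curve system remains (then $h^0=0$). In all four cases $h^2=0$, because $K_Y-(M+\Gamma)$ has negative anticanonical degree.

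I expect the main obstacle to be the character bookkeeping: confirming which two eigensheaves acquire $\Gamma$, given the conventions $2L_i\equiv\Delta_j+\Delta_k$ and the indices mod $3$. A mislabelling would put $\Gamma$ on, say, $M_2$, and would silently change the answer. I will therefore cross-check each case against the total $\chi$ and against the intersection number $H_u\cdot\varepsilon=1$. The only other delicate point is the $h^0$ vanishing for the classes built on $M_0$ and on $M_3+\Gamma$, where one must peel fixed components carefully, recomputing intersections after each removal.
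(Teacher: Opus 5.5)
Your route is the paper's. Lemma~\ref{lem:ramtwist} adds $\Gamma$ to $M_0$ and $M_1$ when $\Gamma=E_3$ or $\widetilde{s_1}$ (components of $\Delta_1$), and to $M_0$ and $M_3$ when $\Gamma=E_2$ or $\widetilde{s_3}$ (components of $\Delta_3$), exactly as you set it up. The three cases you defer do go through:
\begin{itemize}
\item $M_0+\Gamma$ always has $\chi=-u$ and $h^0=0$, hence $h^\bullet=(0,u,0)$.
\item $M_1+\widetilde{s_1}=(u+1)e_0-ue_1-e_2-e_3$ is nef with $\chi=2u+1$.
\item $M_3+E_2$ and $M_3+\widetilde{s_3}$ reduce, after peeling $E_3$ resp.\ $\widetilde{s_1}$, to $u(e_0-e_1)$. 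So $h^0=u+1=\chi$, and the tally is $0+2u+0+(u+1)$ instead of $0+(2u+1)+0+u$.
\end{itemize}
You read off $h^1$ summand by summand, whereas the paper subtracts the global $\chi$ at the end; that is a harmless variation.

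What is wrong is your cross-check, and since you plan to test every case against it, it would mislead you. On $X$ the curve $\varepsilon$ is elliptic with $\varepsilon^2=-1$ and $K_X\cdot\varepsilon=1$. Riemann--Roch therefore gives $\chi(\OO_X(D+\varepsilon))=\chi(\OO_X(D))+D\cdot\varepsilon-1$. The identity without the $-1$ holds only for smooth rational $(-1)$-curves, such as those on $Y$. Hence $\chi(\OO_X(H_u+\varepsilon))=2u+1$, not $2u+2$.

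This is also what item~(c) actually says: there $\chi(\OO_X(H+\varepsilon))=\len(Z)=\tfrac12H\cdot(H-K_X)+H\cdot\varepsilon=\chi(\OO_X(H))-1+H\cdot\varepsilon$. Your value $2u+2$ contradicts your own triple $(3u+1,u,0)$, whose Euler characteristic is $2u+1$. Combined with $h^0=3u+1$ and $h^2=0$, it would give the wrong value $h^1=u-1$. With the corrected value the check is consistent, and it is exactly how the paper obtains $h^1=u$.
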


\begin{proof}
By the labelling of \S\ref{subsec:rays}, $\varepsilon_3=R(E_3)$ and $\sigma_{12}=R(\widetilde{s_1})$ are components of $\Delta_1$, while $\varepsilon_2=R(E_2)$ and $\sigma_{13}=R(\widetilde{s_3})$ are components of $\Delta_3$. By Lemma~\ref{lem:ramtwist} the twist adds $\Gamma$ to $M_0$ and $M_1$ in the first case, to $M_0$ and $M_3$ in the second; the remaining two summands keep the cohomology computed in Proposition~\ref{prop:ray}.

In all four cases the modified $M_0$ summand has $h^\bullet=(0,u,0)$. Indeed, after peeling off the fixed exceptional components, $M_0+\Gamma$ reduces to $(u-3)e_0-(u-1)e_1$ when $\Gamma\in\{E_2,E_3\}$ and to $(u-2)e_0-ue_1$ when $\Gamma\in\{\widetilde{s_1},\widetilde{s_3}\}$; in both cases the imposed multiplicity at $p_1$ exceeds the plane degree by two, so $h^0=0$. Moreover $K_Y-(M_0+\Gamma)=-u(e_0-e_1)-\Gamma$ has degree $-2u-1<0$ against $-K_Y$, so $h^2=0$, and Riemann--Roch gives $h^1=u$.

The modified effective summands are as follows; recall that a nef divisor $N$ on $Y$ has $h^0(N)=\chi(N)$.
\begin{itemize}[leftmargin=1.4em]
\item $\varepsilon=\varepsilon_3$: $M_1+e_3=ue_0-(u-1)e_1$ is nef with $\chi=2u+1$. Adding the unchanged $M_3$, which contributes $u$, gives $h^0=3u+1$.
\item $\varepsilon=\sigma_{12}$: $M_1+(e_0-e_1-e_2)=(u+1)e_0-ue_1-e_2-e_3$ is nef with $\chi=2u+1$; again $M_3$ contributes $u$.
\item $\varepsilon=\varepsilon_2$: $M_3+e_2=u(e_0-e_1)+e_3$; here $E_3$ is a fixed component, and removing it leaves the nef class $u(e_0-e_1)$ with $\chi=u+1$. The unchanged $M_1$ contributes $2u$.
\item $\varepsilon=\sigma_{13}$: $M_3+(e_0-e_1-e_3)=(u+1)e_0-(u+1)e_1-e_2$; here the $(-1)$-curve $e_0-e_1-e_2$ is fixed, and removing it again leaves $u(e_0-e_1)$, so $h^0=u+1$. The unchanged $M_1$ contributes $2u$.
\end{itemize}
Thus $h^0=3u+1$ in every case. The vanishing $h^2=0$ has to be checked separately, since the peeling above preserves only $h^0$. For the four effective modified summands $M$ displayed in the bullets one has $(-K_Y)\cdot M=2u+1>0$, so $(-K_Y)\cdot(K_Y-M)=-6-(2u+1)<0$ and $K_Y-M$ is not effective, $-K_Y$ being nef; hence $h^2(M)=h^0(K_Y-M)=0$; likewise for the two unchanged summands by Proposition~\ref{prop:ray}, and for the modified $M_0$-summand, which is \emph{not} effective, one computes $(-K_Y)\cdot M_0=2u-6$ and $(-K_Y)\cdot\Gamma=1$, $\Gamma$ being a $(-1)$-curve, so that $(-K_Y)\cdot(K_Y-M_0-\Gamma)=-6-(2u-5)=-2u-1<0$ and $K_Y-M_0-\Gamma$ is not effective either, $-K_Y$ being ample. Finally, using $(h-f_1)\cdot\varepsilon=0$, $\varepsilon^2=-1$, $K_X\cdot\varepsilon=1$ and $K_X\cdot(h-f_1)=2$,
\[
\chi\bigl(\OO_X(H_u+\varepsilon)\bigr)=1+\tfrac12(H_u+\varepsilon)\cdot(H_u+\varepsilon-K_X)=2u+1 ,
\]
whence $h^1=(3u+1)-(2u+1)=u$.
\end{proof}

\begin{theorem}\label{thm:twistedfail}
Let $X$ be a primary Burniat surface, let $u\ge2$, $H=H_u$, and let $\varepsilon\in\{\varepsilon_2,\varepsilon_3,\sigma_{12},\sigma_{13}\}$. Then no extension
\[
0\longrightarrow\OO_X(H+K_X-\varepsilon)\longrightarrow\EE\longrightarrow\OO_X(2H+\varepsilon)\otimes\II_Z\longrightarrow0
\]
carrying the special Ulrich Chern classes satisfies $h^0(\EE(-H))=0$, for any $0$-dimensional $Z$ and any extension class $e$. In particular no such $\EE$ satisfies the Ulrich vanishings.
\end{theorem}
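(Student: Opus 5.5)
The plan is to run the argument of Theorem~\ref{thm:nogo} again, with the adjoint kernel replaced by the twisted kernel $A=H+K_X-\varepsilon$. The extension class can now absorb sections, but only through a one-dimensional target. Proposition~\ref{prop:twistcoh} shows that the number of sections to be absorbed is at least $u\ge2$, so the absorption fails.

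\emph{Step 1: exact sequence and target.} Twist the extension by $-H$ and take cohomology, exactly as in the display opening \S\ref{subsec:twisted}. This gives
\[
0\to H^0(\OO_X(K_X-\varepsilon))\to H^0(\EE(-H))\to H^0(\OO_X(H+\varepsilon)\otimes\II_Z)\xrightarrow{\ \delta_e\ }H^1(\OO_X(K_X-\varepsilon)).
\]
The first term vanishes because $p_g=0$ and $\varepsilon>0$. So $h^0(\EE(-H))=0$ would force $h^0(\OO_X(H+\varepsilon)\otimes\II_Z)\le h^1(\OO_X(K_X-\varepsilon))$.

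To bound the target, I will use that each of the four curves has $\varepsilon^2=-1$ and $K_X\cdot\varepsilon=1$ (table in \S\ref{subsec:burniat}). Riemann--Roch then gives $\chi(\OO_X(K_X-\varepsilon))=0$, as in item~(b) of \S\ref{subsec:twisted}. We have $h^0(\OO_X(K_X-\varepsilon))=0$. Next, $h^2(\OO_X(K_X-\varepsilon))=h^0(\OO_X(\varepsilon))=1$, because an irreducible curve of negative self-intersection is rigid. Hence $h^1(\OO_X(K_X-\varepsilon))=1$, and $\delta_e$ can be injective only if
\[
h^0\bigl(\OO_X(H+\varepsilon)\otimes\II_Z\bigr)\le1 .
\]

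\emph{Step 2: the length of $Z$.} The special Ulrich value \eqref{eq:c2special} with $\chi(\OO_X)=1$ is $c_2=\tfrac12(5H^2+3H\cdot K_X)+2$. From $c_2=A\cdot B+\len(Z)$, and using $\varepsilon^2-K_X\cdot\varepsilon+2=0$ as in item~(c) of \S\ref{subsec:twisted}, one gets
\[
\len(Z)=\tfrac12H\cdot(H-K_X)+H\cdot\varepsilon .
\]
For $H=H_u$ we have $H_u-K_X\equiv2u(h-f_1)$, so $\tfrac12H_u\cdot(H_u-K_X)=2u$. Since $H_u\cdot\varepsilon=1$, this gives $\len(Z)=2u+1=\chi(\OO_X(H_u+\varepsilon))$, matching the value computed in Proposition~\ref{prop:twistcoh}.

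\emph{Step 3: counting.} Any $0$-dimensional $Z$, reduced or not, imposes at most $\len(Z)$ conditions on $|H+\varepsilon|$. Proposition~\ref{prop:twistcoh} gives $h^0(\OO_X(H_u+\varepsilon))=3u+1$, hence
\[
h^0\bigl(\OO_X(H_u+\varepsilon)\otimes\II_Z\bigr)\ge(3u+1)-(2u+1)=u\ge2>1 .
\]
So $\delta_e$ has a non-zero kernel for every extension class $e$. That kernel injects into $H^0(\EE(-H))$, so $h^0(\EE(-H))\ge u-1\ge1$. The Ulrich vanishings then fail as well, since $h^0(\EE(-H))=0$ is one of them.

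The cohomology input is already supplied by Proposition~\ref{prop:twistcoh}, so the only delicate point is Step~2. There I must check that the identity $\varepsilon^2-K_X\cdot\varepsilon+2=0$ and the intersection numbers $H_u\cdot\varepsilon=1$ hold for all four curves. Both follow from the table and from $(h-f_1)\cdot\varepsilon=0$, as in Corollary~\ref{cor:candidate}. Finally, the argument uses neither the Cayley--Bacharach property nor local freeness, so it applies to every $Z$ and every $e$, as the statement requires.
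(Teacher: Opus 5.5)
Your proposal is correct and follows the paper's proof essentially step for step: the one-dimensional target $H^1(\OO_X(K_X-\varepsilon))$ of $\delta_e$ from item~(b), the length $\len(Z)=2u+1$ from item~(c), and the count $h^0(\OO_X(H_u+\varepsilon)\otimes\II_Z)\ge u\ge2$ from Proposition~\ref{prop:twistcoh}, with no use of local freeness or of the Cayley--Bacharach property. Your sharper conclusion $h^0(\EE(-H))\ge u-1$ is also right, because $h^0(\OO_X(K_X-\varepsilon))=0$ gives $H^0(\EE(-H))\cong\ker\delta_e$.
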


\begin{proof}
Items (b) and (c) above apply verbatim to each of the four curves, all of which satisfy $\varepsilon^2=-1$ and $K_X\cdot\varepsilon=1$. No local freeness of $\EE$ is needed anywhere below: Chern classes are additive on an extension of coherent sheaves in any case, so the computation of $\len(Z)$ from $c_2$ is unaffected. By (c),
\[
\len(Z)=\tfrac12H\cdot(H-K_X)+H\cdot\varepsilon=2u+1 ,
\]
using $H-K_X=2u(h-f_1)$, $(h-f_1)^2=0$, $K_X\cdot(h-f_1)=2$ and $H\cdot\varepsilon=1$. A $0$-dimensional subscheme of length $2u+1$ imposes at most $2u+1$ conditions on $|H+\varepsilon|$, so Proposition~\ref{prop:twistcoh} gives
\[
h^0\bigl(\OO_X(H+\varepsilon)\otimes\II_Z\bigr)\;\ge\;(3u+1)-(2u+1)\;=\;u\;\ge\;2 .
\]
By (b) the target $H^1(\OO_X(K_X-\varepsilon))$ of $\delta_e$ is one-dimensional, so $\delta_e$ cannot be injective; by the displayed exact sequence, $h^0(\EE(-H))\neq0$.
\end{proof}

\begin{remark}\label{rem:twistedfail-scope}
The obstruction is sharp in $u$ and in $\varepsilon$. For $u=1$ the count only gives $h^0(\OO_X(H_1+\varepsilon)\otimes\II_Z)\ge1$, compatible with a one-dimensional absorber --- but $h^1(\OO_X(H_1))=0$ by Proposition~\ref{prop:ray}, so no twisted kernel is needed there. The four curves covered are exactly those of $H_u$-degree one, which are exactly those for which the Cayley--Bacharach step is free (Remark~\ref{rem:CBfree}); for $\varepsilon_1$ one has $(H_u+2\varepsilon_1)\cdot\varepsilon_1=2u-1>0$, so $\varepsilon_1$ is not a fixed component of the Cayley--Bacharach system and is not covered, and neither are twists by classes outside the table. We stress that by Corollary~\ref{cor:burniat-nonspecial} none of this could produce an Ulrich bundle with respect to a genuine polarization: the classes involved are ample but never base point free.
\end{remark}

\begin{question}\label{q:twisted}
Let $X$ be a surface with $p_g=q=0$ and $H$ very ample with $h^1(\OO_X(H))=s>0$. Do there always exist a curve class $\varepsilon$ and a pair $(Z,e)$ as above, with $Z$ reduced (so that Proposition~\ref{prop:CB} applies) --- with $h^0(\OO_X(K_X-\varepsilon))=0$, $\dim H^1(\OO_X(K_X-\varepsilon))\ge s$, and $\delta_e$ injective on both twists --- producing a special Ulrich bundle with kernel $\OO_X(H+K_X-\varepsilon)$?
\end{question}

\section{Multiples of a polarization and the Coskun--Huizenga theorem}\label{sec:asymptotic}

Throughout this section $H$ is very ample and we consider the polarizations $mH$, $m\ge1$. The target statement is \cite[Thm.~1.2]{CH20}: on \emph{every} smooth projective surface, rank two Ulrich bundles exist on $(X,mH)$ for $m\gg0$. We record what the elementary constructions of \S\ref{sec:methods} give, and where they stop.

\subsection{The regular case}\label{subsec:asymptotic-reg}

\begin{proposition}\label{prop:asymptotic}
Let $X$ be a smooth projective surface with $p_g=q=0$ and let $H$ be very ample. Then for every $m\ge1$ with $mH-K_X$ nef and big, the pair $(X,mH)$ carries a $\mu_{mH}$-semistable special Ulrich bundle of rank two, stable unless $X$ carries an Ulrich line bundle for $mH$.
\end{proposition}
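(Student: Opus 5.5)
The plan is to reduce to Theorem~\ref{thm:main} applied to the polarization $mH$, together with Corollary~\ref{cor:stable}. Fix $m\ge1$ with $mH-K_X$ nef and big. First, $mH$ is very ample, since a positive multiple of a very ample divisor is very ample; so $mH$ is a polarization in the sense of \S\ref{subsec:ulrich}. Second, we need non-speciality. Write
\[
mH=K_X+(mH-K_X),
\]
with $mH-K_X$ nef and big. Kawamata--Viehweg vanishing then gives $h^1(\OO_X(mH))=h^2(\OO_X(mH))=0$. This is the only use of the hypothesis, and it is precisely the input Theorem~\ref{thm:main} requires.

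With $p_g=q=0$, Theorem~\ref{thm:main} applied to $(X,mH)$ produces a special Ulrich bundle $\EE$ of rank two with
\[
c_1(\EE)=3mH+K_X,\qquad c_2(\EE)=\tfrac12\bigl(5m^2H^2+3mH\cdot K_X\bigr)+2 .
\]
Concretely, $\EE$ is the extension \eqref{eq:shape} with $H$ replaced by $mH$, built on a generic reduced cycle of length $\chi(\OO_X(mH))+1$ via Lemma~\ref{lem:LQ25} and Proposition~\ref{prop:CB}. Nothing in that proof uses more than very ampleness and $h^1=0$, so it transfers verbatim.

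For semistability we use Lemma~\ref{lem:semistable}: every Ulrich bundle is $\mu_{mH}$-semistable. For stability we use Corollary~\ref{cor:stable} for the polarization $mH$. If $\EE$ is strictly semistable, then it is an extension of Ulrich line bundles for $mH$. Hence $\EE$ is stable unless such a line bundle exists.

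There is essentially no obstacle. The only point to check carefully is that Theorem~\ref{thm:main}, Lemma~\ref{lem:semistable} and Corollary~\ref{cor:stable} are stated for an arbitrary very ample $H$, so substituting $mH$ is legitimate. One also notes that the threshold is explicit: $m$ needs only to be large enough that $mH-K_X$ is nef and big. For example, $m\ge1$ already suffices when $-K_X$ is nef, and $m$ with $mH\cdot C\ge K_X\cdot C$ on all curves together with $(mH-K_X)^2>0$ suffices in general.
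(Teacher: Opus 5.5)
Your proposal is correct and matches the paper's proof: very ampleness of $mH$, Kawamata--Viehweg applied to $mH=K_X+(mH-K_X)$ to get $h^1(\OO_X(mH))=0$, then Theorem~\ref{thm:main} for $(X,mH)$, with semistability and the stability criterion from Lemma~\ref{lem:semistable} and Corollary~\ref{cor:stable}. The only cosmetic difference is that the paper justifies very ampleness by writing $mH=H+(m-1)H$, a very ample divisor plus a globally generated one.
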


\begin{proof}
The divisor $mH=H+(m-1)H$ is very ample, being the sum of a very ample and a globally generated divisor, and $h^1(\OO_X(mH))=h^1(\OO_X(K_X+(mH-K_X)))=0$ by Kawamata--Viehweg. So $(X,mH)$ satisfies the hypotheses of Theorem~\ref{thm:main}; semistability and the stability criterion are Lemma~\ref{lem:semistable} and Corollary~\ref{cor:stable}.
\end{proof}

The statement in this range is due to Beauville \cite{Bea16b}; what the elementary proof adds is the explicit threshold, often very small --- for $H=aK_X$ pluricanonical on a minimal surface of general type with $p_g=q=0$ \textup{(}$a\ge3$ on primary Burniat surfaces, Remark~\ref{rem:burniat-positive}\textup{)}, $mH-K_X=(ma-1)K_X$ is ample for every $m\ge1$, so all multiples work. Non-speciality is here \emph{produced} rather than assumed, Kawamata--Viehweg supplying it once $m$ clears the threshold; this is why the asymptotic problem is so much softer than the fixed-polarization one of \S\ref{subsec:rays}.

\subsection{Twisted kernels and the budget identity}\label{subsec:pg}
Outside the regular case the adjoint kernel no longer suffices: for $p_g>0$ it fails at the outset, $H^0(\OO_X(K_X))$ injecting into $H^0(\EE(-mH))$, and for $p_g=0<q$ it survives with a deficit. Both are instances of one configuration, set up once below. All $0$-cycles are assumed \emph{reduced}, so that ``every point is redundant'' is meaningful and Proposition~\ref{prop:CB} applies.

Fix an effective divisor $G\ge0$ with $h^0(\OO_X(K_X-G))=0$ --- for $p_g=0$ one may take $G=0$, and for $p_g>0$ any general $G\in|kH|$ with $kH^2>K_X\cdot H$ --- set $s:=h^1(\OO_X(K_X-G))$ and
\begin{gather*}
A:=mH+K_X-G,\qquad B:=2mH+G,\\
V:=B-mH=mH+G,\qquad M:=B-A+K_X=mH+2G ,
\end{gather*}
so that $c_1=A+B=3mH+K_X$ is special, $|V|$ is the system controlling $h^0(\EE(-mH))$ and $|M|$ is the Cayley--Bacharach system. Assume $Z\cap G=\emptyset$ and $h^0(\OO_X(G)\otimes\II_Z)=0$, the second being all that the twist by $-2mH$ requires. Both are open conditions on reduced cycles of the length at issue, and both are satisfiable: a general such cycle avoids $G$ and imposes independent conditions on $|G|$, so the second holds as soon as $\len(Z)\ge h^0(\OO_X(G))$, which by Proposition~\ref{prop:budget} reads $h^0(\OO_X(V))\ge s$ and so holds for $m\gg0$. Then $h^0(\EE(-mH))=0$ if and only if the connecting map
\[
\delta_e:H^0\bigl(\OO_X(V)\otimes\II_Z\bigr)\longrightarrow H^1\bigl(\OO_X(K_X-G)\bigr)\cong\CC^{s}
\]
is injective. For $G=0$ this is the adjoint kernel itself, with $V=M=mH$ and $s=h^1(\OO_X(K_X))=q$.

\begin{proposition}[Budget identity]\label{prop:budget}
Keep the notation and the standing assumptions above: $G\ge0$ is effective with $h^0(\OO_X(K_X-G))=0$ and $s=h^1(\OO_X(K_X-G))$; the divisors $A,B,V,M$ are as displayed; $\EE$ is an extension of $\OO_X(B)\otimes\II_Z$ by $\OO_X(A)$ with $Z$ reduced of the Ulrich length, satisfying the Cayley--Bacharach property with respect to $|M|$; and $Z\cap G=\emptyset$ with $h^0(\OO_X(G)\otimes\II_Z)=0$. Let $m$ be large enough that $V-K_X$ is nef and big, so that
$h^1(\OO_X(V))=h^2(\OO_X(V))=0$. Then
\[
\len(Z)=h^0(\OO_X(V))+h^0(\OO_X(G))-s ,\quad
h^0\bigl(\OO_X(V)\otimes\II_Z\bigr)\ge 1-h^0(\OO_X(G))+s,
\]
the first from the Ulrich value of $c_2$, the second from the Cayley--Bacharach property. For $G$ rigid the two combine with $h^0(\OO_X(V)\otimes\II_Z)\le s$, forced by injectivity of $\delta_e$, to give
\[
h^0\bigl(\OO_X(V)\otimes\II_Z\bigr)=s,\quad c_V(Z)=\len(Z)-1,\quad
\delta_e:\CC^{s}\xrightarrow{\ \sim\ }\CC^{s},
\]
where $c_V(Z)$ is the number of conditions imposed on $|V|$; and these conditions are also sufficient. The counting obstructions thus close \emph{exactly at the boundary}, the necessary inequality being met with equality by the budget.
\end{proposition}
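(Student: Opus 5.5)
The plan is to prove the three assertions in order: the length formula from Chern class bookkeeping, the lower bound from the Cayley--Bacharach property by multiplying through a section defining $G$, and the rigid case by combining these with the injectivity of $\delta_e$.

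\emph{Step 1: the length.} From $c_2(\EE)=A\cdot B+\len(Z)$ and the Ulrich value \eqref{eq:c2special} for the polarization $mH$,
\[
\len(Z)=\tfrac12\bigl(5m^2H^2+3mH\cdot K_X\bigr)+2\chi(\OO_X)-(mH+K_X-G)\cdot(2mH+G).
\]
I would expand this and compare it with $\chi(\OO_X(V))+\chi(\OO_X(G))=2\chi(\OO_X)+\tfrac12V\cdot(V-K_X)+\tfrac12G\cdot(G-K_X)$. This is a routine check with $V=mH+G$, and I expect the two sides to agree identically. The vanishings $h^1(\OO_X(V))=h^2(\OO_X(V))=0$ give $\chi(\OO_X(V))=h^0(\OO_X(V))$. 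For $G$, Serre duality gives $h^2(\OO_X(G))=h^0(\OO_X(K_X-G))=0$ and $h^1(\OO_X(G))=h^1(\OO_X(K_X-G))=s$, so $\chi(\OO_X(G))=h^0(\OO_X(G))-s$. Together these yield the first identity.

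\emph{Step 2: the Cayley--Bacharach bound.} Fix $p\in Z$ and set $Z'=Z\smallsetminus\{p\}$. By Step~1, $Z'$ imposes at most $\len(Z)-1$ conditions, so
\[
h^0\bigl(\OO_X(V)\otimes\II_{Z'}\bigr)\ge h^0(\OO_X(V))-\len(Z)+1=1-h^0(\OO_X(G))+s.
\]
It remains to show that every $t\in H^0(\OO_X(V)\otimes\II_{Z'})$ vanishes at $p$. Let $\sigma$ be a section of $\OO_X(G)$ with divisor $G$; for $G=0$ take the constant section. Then $\sigma t$ lies in $H^0(\OO_X(M)\otimes\II_{Z'})$, which equals $H^0(\OO_X(M)\otimes\II_Z)$ by CB$(|M|)$. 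So $\sigma t$ vanishes at $p$. Since $Z\cap G=\emptyset$ we have $\sigma(p)\neq0$, hence $t(p)=0$. This gives $H^0(\OO_X(V)\otimes\II_{Z'})=H^0(\OO_X(V)\otimes\II_Z)$, and with it the second inequality. This is the step I expect to carry the real content: the disjointness $Z\cap G=\emptyset$ is exactly what transfers the Cayley--Bacharach property from $|M|$ down to $|V|$.

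\emph{Step 3: the rigid case and sufficiency.} If $G$ is rigid, then $h^0(\OO_X(G))=1$, so Step~2 gives $h^0(\OO_X(V)\otimes\II_Z)\ge s$. In the other direction, $h^0(\EE(-mH))=0$ forces $\delta_e$ to be injective, since $h^0(\OO_X(K_X-G))=0$. As the target of $\delta_e$ is $\CC^s$, this gives $h^0(\OO_X(V)\otimes\II_Z)\le s$. Hence equality holds, and $\delta_e$ is an injection between spaces of the same dimension $s$, hence an isomorphism. Then
\[
c_V(Z)=h^0(\OO_X(V))-s=\len(Z)-1
\]
by Step~1 with $h^0(\OO_X(G))=1$.

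For sufficiency, start from a reduced $Z$ satisfying CB$(|M|)$. Proposition~\ref{prop:CB} then makes $\EE$ locally free, and its Chern classes are the special ones by Step~1. Injectivity of $\delta_e$ gives $h^0(\EE(-mH))=0$. Proposition~\ref{prop:reduction}, applied to the polarization $mH$ with $h^0(\OO_X(mH))\neq0$, then yields the Ulrich property. As a consistency check, the twist by $-2mH$ also gives $h^0(\EE(-2mH))=0$ independently: $h^0(\OO_X(K_X-G-mH))=0$ and $h^0(\OO_X(G)\otimes\II_Z)=0$.
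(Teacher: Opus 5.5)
Your proposal is correct and follows the paper's proof essentially step for step. It obtains the length from Riemann--Roch, then transfers the Cayley--Bacharach property from $|M|$ to $|V|$ by multiplying with the section defining $G$ and using $Z\cap G=\emptyset$, and finishes with the count, the injectivity of $\delta_e$, and Proposition~\ref{prop:reduction} for sufficiency. The only difference is cosmetic, in Step~1: the identity you left as ``expected'' does hold (the expansion checks out), but the paper obtains it without expanding, by writing $0=\chi(\EE(-mH))=\chi(\OO_X(K_X-G))+\chi(\OO_X(V))-\len(Z)$ and using $\chi(\OO_X(K_X-G))=\chi(\OO_X(G))$.
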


\begin{proof}
The budget: $\chi(\EE(-mH))=0$ gives $\len(Z)=\chi(\OO_X(A-mH))+\chi(\OO_X(B-mH))=\chi(\OO_X(K_X-G))+\chi(\OO_X(V))$, then $\chi(\OO_X(K_X-G))=h^0(\OO_X(G))-s$ by Serre duality and $\chi(\OO_X(V))=h^0(\OO_X(V))$ by the assumed vanishing. The inequality: if $x\in Z$ is redundant for $|M|$ --- every section of $\OO_X(M)\otimes\II_{Z'}$ vanishes at $x$, with $Z'$ the colength-one subscheme at $x$ --- then so does every section of $s_G\cdot H^0(\OO_X(V)\otimes\II_{Z'})$; as $s_G(x)\neq0$, the sections of $\OO_X(V)\otimes\II_{Z'}$ vanish at $x$ themselves, so $Z$ has the Cayley--Bacharach property for $|V|$ too, and step (4) of Theorem~\ref{thm:nogo} applied to $|V|$ gives the bound. For sufficiency, the Cayley--Bacharach property makes the extension locally free by Proposition~\ref{prop:CB}, injectivity of $\delta_e$ gives $h^0(\EE(-mH))=0$, and the twist by $-2mH$ has quotient $\II_Z(G)$, without sections by hypothesis, and kernel $\OO_X(A-2mH)=\OO_X(K_X-G-mH)$, also without sections, since multiplication by a non-zero section of $\OO_X(mH)$ embeds $H^0(\OO_X(K_X-G-mH))$ into $H^0(\OO_X(K_X-G))=0$; Proposition~\ref{prop:reduction} concludes.
\end{proof}

For $G=0$ and $p_g=0<q$ this reads: the construction closes precisely when $Z$ imposes $c(Z)=\len(Z)-1$ conditions on $|mH|$, every point being redundant, and $\delta_e:\CC^q\to\CC^q$ is an isomorphism. The deficit below the independence threshold is exactly $q$, matched by the absorber $H^1(\OO_X(K_X))$; the extension is then unique, Serre duality for Ext groups identifying
\[
\Ext^1\bigl(\OO_X(2mH)\otimes\II_Z,\,\OO_X(mH+K_X)\bigr)\;\cong\;H^1\bigl(\OO_X(mH)\otimes\II_Z\bigr)^\vee ,
\]
of dimension $\len(Z)-c(Z)=1$, so the condition is a property of $Z$ alone. The missing lemma is thus sharply isolated, and we have not settled it by the methods of \S\ref{sec:methods}.

For $p_g>0$ the smallest case is already instructive. On a smooth quintic $X\subset\PP^3$ one has $K_X=H$ and $H^2=5$, so $kH^2>K_X\cdot H$ first holds at $k=2$: take $G\in|2H|$ general, so that $A=0$, $V=3H$, $M=5H$. Arithmetic Cohen--Macaulayness gives $h^1(\OO_X(kH))=0$ for all $k$, whence $s=h^1(\OO_X(-H))=0$ and both Ulrich vanishings fall on the cycle; at $m=1$ the budget reads $\len(Z)=h^0(\OO_X(3H))+h^0(\OO_X(2H))=30=20+10$. This is an identity and not a pair of competing requirements, since multiplication by a section of $\OO_X(H)$ embeds $H^0(\OO_X(2H)\otimes\II_Z)$ into $H^0(\OO_X(3H)\otimes\II_Z)$. The binding condition is Cayley--Bacharach for $|5H|$: the $30$ points must impose at most $29$ conditions on $|5H|$, of dimension $55$, while imposing the maximum $20$ on $|3H|$ --- special position, positional and not numerical.

So no \emph{counting} obstruction survives; the difficulty is constructive --- and it is a difficulty about the method, not about the object, because the bundles are there. A rank two bundle on the quintic $X$ with $c_1=4H=3H+K_X$ and $c_2=30$ is Ulrich as soon as $h^0(\EE(-H))=0$, by Proposition~\ref{prop:reduction}; and such a bundle exists on a general quintic surface by \cite[Thm.~6.13]{CF09}, the existence having been obtained earlier by Beauville and Schreyer in the appendix to \cite{Bea00} through a computer algebra computation. The elementary machine of \S\ref{sec:methods} cannot see them, which is precisely the point: what it is missing is positional and not numerical. For generic $Z$, Cayley--Bacharach for $|M|$ needs $\len(Z)\ge h^0(\OO_X(M))+1$ by Lemma~\ref{lem:LQ25}, while $h^0(\OO_X(M))-\len(Z)$ grows like $m(G\cdot H)$: a deficit linear in $m$ survives, and the points-on-curves mechanism of \S\ref{subsec:Anghel} buys the property back at a cost again linear in $m$, so the two slopes must be compared architecture by architecture. What is needed is this.

\begin{question}\label{q:cycles}
With the notation above, do there exist, for $m\gg0$, reduced zero-cycles $Z$ of the exact Ulrich length which are \emph{Cayley--Bacharach-special} \textup{(}every point redundant for $|M|$\textup{)} and \emph{Brill--Noether-general} \textup{(}imposing $\len(Z)-1$ conditions on $|V|$, with the unique relation pairing non-trivially under $\delta_e$\textup{)}? An affirmative answer would give an elementary and effective proof of \cite[Thm.~1.2]{CH20}; this implication is unconditional. The converse is not: \cite{CH20} produces Ulrich bundles, and a rank two Ulrich bundle need not admit a sub-line bundle of the shape $\OO_X(A)$ at all, so the existence of the cycles cannot be read off from \cite{CH20}.
\end{question}

\section*{Declaration of generative AI and AI-assisted technologies}

During the preparation of this work, the authors used ChatGPT and Claude to improve the language and clarity of the manuscript and to assist in checking the mathematical arguments and proofs. After using these tools, the authors independently verified and reviewed all mathematical content and take full responsibility for the content of the published article.

\end{document}